\documentclass[12pt]{amsart}
\usepackage[utf8]{inputenc}
\usepackage[english]{babel}


\usepackage[left=25mm, right=25mm, top=25mm, bottom=25mm]{geometry}
\usepackage[shortlabels]{enumitem}
\usepackage{polynom, subcaption, hyperref, color, comment}
\hypersetup{colorlinks, citecolor=red, filecolor=black, linkcolor=blue, urlcolor=blue}
\usepackage{tcolorbox}
\usepackage{appendix}
\usepackage{graphicx, float, tikz-cd, tikz, caption}


\usepackage{csquotes}

\makeatletter
\def\Ddots{\mathinner{\mkern1mu\raise\p@
\vbox{\kern7\p@\hbox{.}}\mkern2mu
\raise4\p@\hbox{.}\mkern2mu\raise7\p@\hbox{.}\mkern1mu}}
\makeatother




\usepackage{amsmath, amsfonts, amssymb, amsthm, mathtools, mathrsfs, commath}

\newtheorem {theorem}{Theorem}[section]
\newtheorem {lemma}[theorem]{Lemma}
\newtheorem {proposition}[theorem]{Proposition}
\newtheorem {conjecture}[theorem]{Conjecture}
\newtheorem*{conjecture*}{Conjecture}

\theoremstyle{definition}
\newtheorem {definition}[theorem]{Definition}
\newtheorem {remark}[theorem]{Remark}

\newtheorem{thm}{Theorem}




\begin{document}

\title{A counterexample to the singular Weinstein conjecture}
\author{Josep Fontana-McNally}
\thanks{Josep Fontana-McNally was supported by an INIREC grant Introduction to research financed under the  project “Computational, dynamical and geometrical complexity in fluid dynamics”, Ayudas Fundación BBVA a Proyectos de Investigación Científica 2021. Josep Fontana, Eva Miranda and Cédric Oms are partially supported by the Spanish State Research Agency grant PID2019-103849GB-I00 of AEI / 10.13039/501100011033 and by the AGAUR project 2021 SGR 00603.}
\address{{Laboratory of Geometry and Dynamical Systems, Department of Mathematics}, Universitat Polit\`{e}cnica de Catalunya and University of Oxford}
\email{josep.fontana@estudiantat.upc.edu}

\author{ Eva Miranda}
\thanks{ Eva Miranda is supported by the Catalan Institution for Research and Advanced Studies via an ICREA Academia Prize 2021 and by the Alexander Von Humboldt Foundation via a Friedrich Wilhelm Bessel Research Award. Eva Miranda is also supported by the Spanish State
Research Agency, through the Severo Ochoa and Mar\'{\i}a de Maeztu Program for Centers and Units
of Excellence in R\&D (project CEX2020-001084-M).  Eva Miranda and Daniel Peralta-Salas  acknowledge partial support from the grant
“Computational, dynamical and geometrical complexity in fluid dynamics”, Ayudas Fundación BBVA a Proyectos de Investigación Científica 2021.  }
\address{{Laboratory of Geometry and Dynamical Systems \& IMTech, Department of Mathematics}, Universitat Polit\`{e}cnica de Catalunya and CRM, Barcelona, Spain \\ Centre de Recerca Matemàtica-CRM}
\email{eva.miranda@upc.edu}

\author{Cédric Oms}
\thanks{Cédric Oms acknowledges financial support from the Juan de la Cierva post-doctoral grant (grant number FCJ2021-046811-I)}
\address{BCAM Bilbao, Mazarredo, 14. 48009 Bilbao Basque Country - Spain}
\email{coms@bcamath.org}

\author{Daniel Peralta-Salas}
\address{Instituto de Ciencias Matemáticas (ICMAT), Consejo Superior de Investigaciones Científicas, Madrid}
\email{dperalta@icmat.es}
\thanks{Daniel Peralta-Salas is supported by the grants CEX2019-000904-S, RED2022-134301-T and PID2022-136795NB-I00 funded by
MCIN/AEI/10.13039/501100011033.}
\maketitle

\begin{abstract}
In this article, we study the dynamical properties of Reeb vector fields on $b$-contact manifolds. We show that in dimension $3$, the number of so-called singular periodic orbits can be prescribed. These constructions illuminate some key properties of escape orbits and singular periodic orbits, which play a central role in formulating singular counterparts to the Weinstein conjecture and the Hamiltonian Seifert conjecture. In fact, we prove that the above-mentioned constructions lead to counterexamples of these conjectures as stated in~\cite{MO21}. Our construction shows that there are $b$-contact manifolds with no singular periodic orbit and no regular periodic orbit away from $Z$. We do not know whether there are constructions with no generalized escape orbits whose $\alpha$ \textit{and} $\omega$-limits both lie on $Z$ (a generalized singular periodic orbit). This is the content of the \emph{generalized Weinstein conjecture}.
\end{abstract}


\section{Introduction}

Compactifying the phase space of dynamical systems by adding manifolds at infinity or collision manifolds yields singularities in the geometric forms that define the flow of the systems. These singularities are in general of $E$-type \cite{MS21}. In various instances within celestial mechanics, they emerge as a well-understood sub-type, commonly referred to as ``type $b$''-singularities, following regularization transformations. 

The geometric framework for $E$-singularities developed in \cite{MS21} is powerful enough to include the possibility of defining Hamiltonian and Reeb vector fields associated with singular symplectic and contact forms respectively. These vector fields are smooth vector fields that extend the dynamics on a space to the singular manifold, and in some respects behave similarly to their counterparts defined using smooth symplectic and contact forms. This leads to the question: do central theorems about periodic orbits of Hamiltonian and Reeb vector fields also hold when these are defined by singular structures? 
Exploring this question is pivotal for comprehending dynamics on non-compact manifolds, as they can often be compactified into  $E$-manifolds.

In \cite{MO21}, Miranda and Oms examined this question for the special case of $b$-structures and formulated the $b$-counterpart of the Weinstein and the Hamiltonian Seifert conjectures, which we shall call the singular Weinstein and singular Hamiltonian Seifert conjectures throughout this article. In Sections \ref{sec:counterexample} and \ref{sec:bSeifert} we tackle these conjectures, providing counterexamples for them using techniques developed in Section \ref{sec:tools}.

The question about dynamical invariant sets on compact $b$-contact manifolds is two-fold. On the one hand, there always exist infinitely many periodic orbits on the critical set when the dimension of the ambient manifold is $3$ \cite{MO21}; this comes from the fact that the dynamics on the critical set is described by the Hamiltonian dynamics of a certain function, called the \emph{exceptional Hamiltonian}. On the other hand, the article \cite{MO21} provides examples of closed $b$-contact manifolds whose associated Reeb vector field does not have any periodic orbits outside of the singular set. Consequently, the question of existence of periodic orbits \emph{outside the critical set}  needs to be reformulated.

To pose and answer such questions of dynamics on singular manifolds, it is convenient to make the following distinctions between ``singular orbits", that is, integral curves with limit sets on the critical set (see also Definition \ref{def:spo}). In loose order of generality, a \textit{generalized escape orbit} is an orbit whose $\alpha$- \textit{or} $\omega$-limit set has a non-empty intersection with the critical set. An \textit{escape orbit} is an orbit whose $\alpha$- \textit{or} $\omega$-limit set is \textit{a point} on the critical set. A \textit{generalized singular periodic orbit}  is an orbit whose $\alpha$- \textit{and} $\omega$-limit sets lie on the singular set (see Definition \ref{def:gspo}). Finally, a \textit{singular periodic orbit} is an orbit whose $\alpha$- \textit{and} $\omega$-limit sets are \textit{points} on the critical set. The following diagram illustrates the relationships between these types of singular orbits (see also Figure \ref{fig:gardenoforbits} for pictures of each type of orbit). We note that generalized singular periodic orbits and singular periodic orbits are called so, because if we quotient the critical set to a point in the whole manifold, these orbits, together with the singular point, become closed invariant 1-dimensional submanifolds. In the language of dynamical systems, in the quotient space, those orbits correspond to homoclinic orbits. 

\begin{figure}[H]\label{fig:dependenciessingularorbits}
    \centering
\begin{tikzcd}
                                            & \text{Generalized Escape Orbit}                                                &                                                                    \\
                                            &                                                                                &                                                                    \\
\text{Escape Orbit} \arrow[ruu, Rightarrow] &                                                                                & \text{Generalized Singular Periodic Orbit} \arrow[luu, Rightarrow] \\
                                            &                                                                                &                                                                    \\
                                            & \text{Singular Periodic Orbit} \arrow[ruu, Rightarrow] \arrow[luu, Rightarrow] &                                                                   
\end{tikzcd}
    \caption{Dependencies between the different types of singular orbits. None of the converse implications hold.}
    \label{fig:enter-label}
\end{figure}
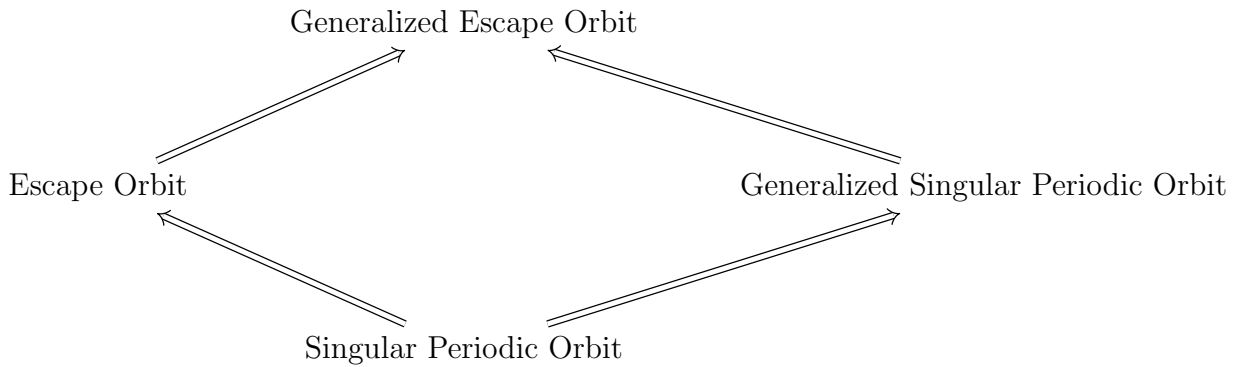

In \cite{MO21}, it was conjectured that the right dynamical invariants to consider in $b$-contact manifolds are \textit{singular periodic orbits}. These are orbits contained outside of the critical set whose $\alpha$- and $\omega$-limit sets are exactly stationary points of the Reeb $b$-vector field on the critical set (see Figure \ref{fig:spodef} below). The singular Weinstein conjecture states that there exists at least a periodic orbit or a singular periodic orbit outside of the critical set. This conjecture was supported by the examples not satisfying the regular Weinstein conjecture. Further progress was obtained in \cite{MOP22,FMOP23}, where the authors analyzed the dynamical properties of the Reeb $b$-vector field around the singular points of the field on the critical set.


Loosely speaking, it was shown in \cite{MOP22} that generically, there exist escape orbits around the stationary points of the Reeb $b$-vector field on the critical set. Later, a more thorough analysis in \cite{FMOP23} showed that the number of such orbits is infinite if the exceptional Hamiltonian has critical points of Morse index $1$. These results point towards a positive answer to the singular Weinstein conjecture.

However, contrary to initial expectations, in this article, we show that the singular Weinstein conjecture does not hold. 

\begin{thm}[Theorem \ref{thm:counterexample}]\label{thm:introcounterexample}
    There exists a $b$-contact form on $(\mathbb{S}^3,\mathbb{S}^2)$ such that the flow of the associated Reeb $b$-vector field does not have any singular periodic orbits, nor periodic orbits outside of the critical set.
\end{thm}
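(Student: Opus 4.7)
The plan is to exhibit an explicit $b$-contact form $\alpha$ on $(\mathbb{S}^3,\mathbb{S}^2)$ whose Reeb $b$-vector field $R$ simultaneously has no regular periodic orbits off $Z=\mathbb{S}^2$ and no singular periodic orbits. Since $\mathbb{S}^3\setminus Z$ is the disjoint union of two open $3$-balls $B_+$ and $B_-$, the construction splits naturally into controlling the flow on each hemisphere and pinning down how trajectories approach $Z$ at the critical points of the exceptional Hamiltonian.

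First, I would pick a Morse exceptional Hamiltonian $h\colon\mathbb{S}^2\to\mathbb{R}$ with the smallest possible number of critical points (one maximum and one minimum) and a $b$-contact normal form near $Z$ realizing $h$ as its exceptional Hamiltonian. The singular zeros of $R$ on $Z$ are then exactly the critical points of $h$, so any candidate singular periodic orbit must have its $\alpha$- and $\omega$-limits among these two points. Using the local analysis of escape orbits of \cite{MOP22,FMOP23}, together with the normal forms and tools developed in Section~\ref{sec:tools}, I would identify on which side ($B_+$ or $B_-$) and in which time direction escape orbits can accumulate to each critical point, thereby restricting the possible shape of a singular periodic orbit.

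Second, I would extend the local model to a global $b$-contact form engineered so that, inside each hemisphere, no trajectory of $R$ has both $\alpha$- and $\omega$-limits on $Z$. Concretely, I would arrange that a forward trajectory escaping a singular zero on a given side is driven away from $Z$ by an auxiliary Lyapunov-type function along the flow, and similarly for backward trajectories. This rules out singular periodic orbits by obstructing the heteroclinic/homoclinic connections between the singular zeros of $R$, while also eliminating a large class of potential recurrent behaviour in $B_\pm$.

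Finally, to eliminate the remaining regular periodic orbits, I would insert a contact plug of Wilson/Kuperberg type, suitably adapted to the $b$-contact framework as in Section~\ref{sec:tools}, in small open sets compactly contained in $B_\pm$ and disjoint from a neighborhood of $Z$. Since the plug is supported away from $Z$, the local analysis at the singular zeros of $R$ is untouched. The main obstacle, and the step requiring the most care, is to check that the plug insertion does not create new singular periodic orbits: any trajectory trapped by the plug must not acquire new $\alpha$- or $\omega$-limit points on $Z$ that coincide with singular zeros of $R$. Ensuring this amounts to placing the plug inside a region from which the stable and unstable sets of the singular zeros of $R$ are already disjoint, so that the Lyapunov obstruction from the previous step survives the modification and every orbit off $Z$ remains non-recurrent.
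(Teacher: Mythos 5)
Your plan has a genuine gap at its core: the final step, inserting ``a contact plug of Wilson/Kuperberg type, suitably adapted to the $b$-contact framework as in Section~\ref{sec:tools}'', does not exist and cannot be made to exist in the form you need. Section~\ref{sec:tools} contains no plug construction; its only perturbation tool, Proposition~\ref{prop:breakspo}, breaks \emph{singular} periodic orbits by a small twist, it does not destroy regular periodic orbits. More fundamentally, an aperiodic plug for Reeb flows is obstructed: by Taubes' proof of the three-dimensional Weinstein conjecture, every Reeb field on a closed contact $3$-manifold has a periodic orbit, so no insertion that keeps the field Reeb (of some contact form) can remove all closed orbits in general -- Wilson plugs contain periodic orbits and Kuperberg-type plugs are neither volume-preserving in the smooth class nor contact. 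Your second step is also unsupported: a Lyapunov-type function forcing forward orbits away from $Z$ conflicts with the fact that the Reeb field preserves the volume $\alpha\wedge d\alpha$ on $M\setminus Z$, and with Theorem~\ref{thm:2norinfty}, which says that generically escape orbits to the singular zeros on $Z$ \emph{do} exist (at least $2N$ of them); one cannot prevent orbits from accumulating on $Z$, only prevent the incoming and outgoing escape semi-orbits from matching up. Indeed, in the actual counterexample infinitely many orbits have both limit sets on $Z$ (generalized singular periodic orbits), so the property you try to engineer -- no trajectory with both $\alpha$- and $\omega$-limits on $Z$ -- is stronger than needed and is not achieved by the known construction either.

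The paper's route avoids both problems by never having to kill regular periodic orbits at all: it starts from the explicit Hopf $b$-vector field of Section~\ref{sec:bhopf} (the Reeb field of the $b$-contact form induced on $\mathbb S^3\subset(\mathbb R^4,{}^b\omega_0)$), for which one reads off directly that there are \emph{no} periodic orbits away from $Z$ (the $(x_1,y_1)$-subsystem is non-recurrent) and exactly two singular periodic orbits with no other escape orbits (Proposition~\ref{prop:bHopforbits}). Then Proposition~\ref{prop:breakspo} is applied to a small Darboux ball away from $Z$ through which each singular periodic orbit passes: a $C^\infty$-small twist makes the incoming and outgoing semi-orbits miss each other, and since there are no other escape orbits they cannot reconnect. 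No new periodic orbits appear because the perturbation is supported away from $Z$ and all orbits off $Z$ still have their limit sets on $Z$. If you want to salvage your write-up, replace the Lyapunov and plug steps by this explicit model plus the localized breaking perturbation.
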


Note that this counterexample does not exclude generalized singular periodic orbits. The counterexample is based on analyzing the flow of the Reeb $b$-vector field induced on the unit sphere in the standard $b$-symplectic $\mathbb{R}^{4}$. In analogy to the smooth case, we call this flow the $b$-Hopf flow on $\mathbb S^3$. As already observed in \cite{MO21}, this flow has $2$ singular periodic orbits, and no periodic orbits outside the critical set. However, in Section \ref{sec:tools} we show that we can alter the $b$-contact form in small coordinate neighborhoods through which these singular periodic orbits pass, in such a way that the singular periodic orbits are broken off. Thus, we obtain a $b$-contact manifold satisfying the desired conditions. Indeed, in Section \ref{sec:tools} we show that we have a lot of \emph{flexibility} to customize the number of singular periodic orbits as we prove in the following theorem.

\begin{thm}[Theorem \ref{thm:nspo}]\label{thmm:nspo}
Let $(M,\xi)$ be a coorientable $3$-dimensional contact manifold. Then for any integer $0\leq k\leq N$, there exists a $b$-contact form on $M$ whose critical set $Z$ consists of $N$ components diffeomorphic to $\mathbb S^2$ and the number of singular periodic orbits is exactly $k$. The associated $b$-contact structure coincides with $\xi$ outside a neighborhood of the balls enclosed by $Z$. Furthermore, there is an infinite number of generalized escape orbits converging to each component of $Z$ (i.e., orbits whose $\alpha$- or $\omega$-limit sets are on $Z$ but are not singular points).
\end{thm}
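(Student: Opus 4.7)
The plan is to reduce the result to a local problem: construct a $b$-contact ``plug'' inside a Darboux ball that contributes one spherical component to the critical set, with a prescribed number of singular periodic orbits (either $0$ or $1$), and that agrees with the ambient smooth contact form outside a small neighborhood of the critical sphere. Performing $N$ such surgeries in disjoint Darboux balls of $(M,\xi)$, with $k$ of them contributing $1$ singular periodic orbit and the remaining $N-k$ contributing $0$, then yields the required global structure.

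The first step is to build the local plug. The starting point is the $b$-Hopf flow on $\mathbb{S}^3$, whose critical set is a single $\mathbb{S}^2$ and whose Reeb flow has exactly two singular periodic orbits. After excising a small ball around a non-singular point of the $b$-Hopf model and interpolating to a standard contact form, one obtains a $b$-contact form on a ball $B$ that coincides with the standard contact form on $\mathbb{R}^3$ near $\partial B$ and whose critical set is a sphere $Z \subset \mathrm{int}(B)$. The flexibility results of Section \ref{sec:tools} -- the very mechanism that underlies Theorem \ref{thm:introcounterexample} -- allow one to break either one or both of the two singular periodic orbits by modifying the $b$-contact form in arbitrarily small coordinate neighborhoods through which those orbits pass, producing plugs with exactly $0$, $1$, or $2$ singular periodic orbits. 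The infinitely many generalized escape orbits converging to $Z$ persist in all cases, since their existence is a generic local phenomenon around the critical set, established in \cite{MOP22, FMOP23}.

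The second step is the global assembly. Because $(M,\xi)$ is a coorientable contact 3-manifold, Darboux's theorem provides $N$ pairwise disjoint closed balls $B_1, \dots, B_N \subset M$ together with strict contactomorphisms from neighborhoods of each $B_i$ onto a ball in the standard contact $\mathbb{R}^3$. Under these identifications, the plug is inserted into each $B_i$ so as to contribute a critical component $Z_i \cong \mathbb{S}^2$ in $\mathrm{int}(B_i)$: in $k$ of the balls we use the plug with one singular periodic orbit, and in the remaining $N-k$ we use the plug with zero singular periodic orbits. Since the plug matches the standard contact form near $\partial B_i$, the resulting $b$-contact form extends $\xi$ smoothly off $\bigsqcup_i B_i$ and defines a global $b$-contact form on $M$. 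The critical set then has exactly $N$ spherical components, the total count of singular periodic orbits is exactly $k$, and each $Z_i$ admits infinitely many generalized escape orbits.

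The hard part is the construction of the plugs, and more precisely the surgery that destroys a \emph{selected} singular periodic orbit of the $b$-Hopf model without creating any new one and without affecting the other singular periodic orbit or the structure near $\partial B$. Both claims are the content of Section \ref{sec:tools}: the key input is that the perturbation can be carried out in a chart arbitrarily close to a regular point of the singular periodic orbit, so that two such perturbations associated to the two singular periodic orbits can be made with disjoint supports. Once this local perturbation theory is in place, the global assembly is a routine Darboux-ball surgery, and the technical heart of the proof lives entirely in the local step.
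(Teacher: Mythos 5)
Your overall strategy (inserting $b$-Hopf--type bubbles into disjoint Darboux balls and killing singular periodic orbits by small local perturbations) is the same as the paper's, but the order in which you perform the two operations creates a genuine gap in the count. You break the unwanted orbits \emph{inside each plug before gluing}, and then assert that the assembled manifold has exactly $k$ singular periodic orbits. This does not follow: each critical sphere still emits escape orbits (semi-orbits converging to the two zeros of the exceptional Hamiltonian) that leave the plug and continue under the ambient Reeb flow of a contact form defining $\xi$, about which nothing is known. After gluing, these roughly $2N$ escape orbits may reconnect --- with each other or across different plugs --- and produce new singular periodic orbits invisible to your local normalization. The paper is careful precisely here: the insertion step (Proposition \ref{prop:addsingularbubbles}) only yields \emph{at least} $N$ singular periodic orbits together with \emph{finitely many} escape orbits, and the exact count $k$ is achieved by applying the breaking perturbation (Proposition \ref{prop:breakspo}) globally, \emph{after} the assembly, using that each application removes one singular periodic orbit without creating new ones because the perturbation chart meets no other escape orbit. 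Your argument is repaired simply by switching the order: glue first, then break down to $k$.

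A second step you treat as routine is the phrase ``interpolating to a standard contact form'' near $\partial B$. One can only interpolate two contact forms (staying contact) once they define the \emph{same} contact structure on the overlap, so you must first prove that a shell of the bubble model, away from the critical sphere, is contactomorphic to a shell in $(\mathbb R^3,\xi_{st})$. This is the technical heart of the paper's Step 1: Lemma \ref{lemma:shells} compares the characteristic foliations induced on a sphere of radius $R>1$, invokes Giroux's neighborhood theorem (Theorem \ref{thm:giroux}), and needs a $C^0$-perturbation of the embedded sphere to upgrade a mere homeomorphism of foliations to a diffeomorphism; only then is the convex combination $(1-g)\hat\alpha+g\,f\hat\alpha=(1-g+fg)\hat\alpha$ a $b$-contact form. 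Finally, a small but real misattribution: the infinitely many generalized escape orbits do not follow from the genericity results of \cite{MOP22,FMOP23} (for components $Z\cong\mathbb S^2$ those give finitely many escape orbits, to singular points); in the paper they come from the explicit model, whose off-axis orbits inside the ball are helices spiralling to parallels of the critical sphere, and they survive because all perturbations are supported away from a neighborhood of $Z$.
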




In Section \ref{sec:bSeifert}, we then show that these results can be used to construct a $1$-parametric version of Theorem \ref{thm:introcounterexample} in the standard $b$-symplectic $\mathbb{R}^{4}$.

\begin{thm}[Theorem \ref{thm:R4counterexampleSeifert}]
    There exists a smooth Hamiltonian $H$ in $(\mathbb{R}^4,{^b}\omega_0)$ such that for $\epsilon>0$ small enough, $\Sigma_c:=\{H^{-1}(c)\}$, $c\in (1-\epsilon,1+\epsilon)$ has an induced $b$-contact form whose associated Reeb $b$-vector field has neither periodic nor singular periodic orbits outside of its critical set $Z$.
\end{thm}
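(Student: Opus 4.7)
The plan is to extract a $1$-parameter family of counterexamples by propagating the construction of Theorem~\ref{thm:introcounterexample} along the Liouville flow of ${}^b\omega_0$, and to realize this family as level sets of a single globally defined Hamiltonian on $\mathbb{R}^4$. To set up Liouville coordinates, let $L$ be a Liouville vector field for ${}^b\omega_0$ transverse to the unit sphere $\mathbb{S}^3$; its flow identifies a tubular neighborhood $U$ of $\mathbb{S}^3$ with the $b$-symplectization $(\mathbb{S}^3 \times (-\delta,\delta),\, d(e^s \alpha_0))$, where $\alpha_0 = \iota_L {}^b\omega_0 |_{\mathbb{S}^3}$ is the standard $b$-contact form whose Reeb vector field is the $b$-Hopf flow, with critical set $Z \cap \mathbb{S}^3 = \mathbb{S}^2$. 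In these coordinates $L = \partial_s$, and any graph $\{s = g(p)\}$ transverse to $L$ inherits the $b$-contact form $e^{g(p)} \alpha_0$.

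First I would invoke Theorem~\ref{thm:nspo} with $N=1$ and $k=0$ to obtain a $b$-contact form $\alpha = f\alpha_0$ on $\mathbb{S}^3$, with $f > 0$ a smooth function equal to $1$ outside small neighborhoods of the two balls bounded by $\mathbb{S}^2$, whose Reeb $b$-vector field has no periodic orbits and no singular periodic orbits outside of $Z$. Then I would define the Hamiltonian on $U$ by
\[
H(p, s) := \frac{e^s}{f(p)},
\]
so that the level set $\Sigma_c = H^{-1}(c)$ is the graph $\{s = \log(cf(p))\}$, transverse to $L$, and the induced $b$-contact form on it equals $cf(p)\alpha_0 = c\alpha$. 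The corresponding Reeb $b$-vector field is therefore a constant rescaling (by $1/c$) of the Reeb $b$-vector field of $\alpha$ on $\mathbb{S}^3$, and inherits the absence of periodic orbits and singular periodic orbits outside $Z$. To extend $H$ globally I would glue it with a proper smooth function (for instance, a multiple of $\|x\|^2$) outside $U$ using a cutoff in the $s$-coordinate supported away from the slab $s \in (-\delta/2, \delta/2)$; choosing $\epsilon > 0$ small then guarantees $\Sigma_c \subset U$ for every $c \in (1-\epsilon, 1+\epsilon)$, and the desired property is inherited from the modified sphere.

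The hard part will be ensuring that $H$ is globally smooth on $\mathbb{R}^4$, including across the critical hypersurface $Z$ of ${}^b\omega_0$, and that no spurious critical values of $H$ appear inside the window $(1-\epsilon,1+\epsilon)$. Smoothness across $Z$ should follow from the fact that Theorem~\ref{thm:nspo} produces $\alpha$ as an honest $b$-contact form, so that $f$ is a smooth function in the ordinary sense (not merely a $b$-function) and the graphs $\{s = \log(cf(p))\}$ remain smooth hypersurfaces transverse to $L$ even where they cross $Z$, which is itself invariant under the Liouville flow. The absence of spurious critical values is immediate from $L = \partial_s$ in Liouville coordinates, which makes $\partial_s H = H \neq 0$ throughout the relevant window. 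The delicate technical point worth double-checking is the compatibility of the cutoff interpolation with the $b$-structure along $Z$, which should reduce to a standard smooth bump function argument once one works in $b$-compatible charts around $Z$.
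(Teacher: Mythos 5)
Your symplectization idea is sound as far as it goes: level sets that are graphs over $\mathbb S^3$ in the coordinates where $L=\partial_s$ and ${}^b\omega_0=d(e^s\alpha_0)$ induce exactly the contact forms $e^g\alpha_0$, i.e.\ pointwise positive rescalings of the $b$-Hopf form $\alpha_0$. But this is precisely where the gap lies: your construction can only realize forms in the \emph{conformal class} of $\alpha_0$ (same $b$-contact structure, critical set the equatorial $\mathbb S^2$), and no result you can cite produces such a form with the required dynamics. Theorem \ref{thm:nspo} with $N=1$, $k=0$ does not give "$\alpha=f\alpha_0$": it starts from a \emph{smooth} contact manifold, inserts a new small critical sphere bounding a bubble, and changes the contact structure inside that region; its critical set is not $\mathbb S^3\cap\{x_1=0\}$, and moreover it says nothing about the absence of \emph{regular} periodic orbits away from $Z$ (outside the bubbles the Reeb field is that of the ambient contact form, which may have many closed orbits). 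The actual counterexample form of Theorem \ref{thm:counterexample} is also not of the form $f\alpha_0$: the perturbation in Proposition \ref{prop:breakspo} is $\tilde\alpha=(1-\varepsilon g)\alpha+\varepsilon g\alpha'$ with $\alpha'$ a \emph{different} contact form, so the kernel is tilted on the support of $g$ and $\tilde\alpha$ leaves the conformal class. Hence the dynamical input your scheme needs --- a positive function $f$ on $(\mathbb S^3,\mathbb S^2)$ such that the Reeb field of $f\alpha_0$ has no periodic and no singular periodic orbits off $Z$ --- is exactly the nontrivial statement still to be proved, and your proposal does not supply it.

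This is also where your route diverges from the paper's proof, which avoids the issue by working on the Hamiltonian side from the start: it takes $H=\tfrac12(|x|^2+|y|^2)$, notes that on every level set the induced dynamics is the $b$-Hopf flow with exactly two singular periodic orbits through $(\pm c^{1/2},0,0,0)$ and no other escape orbits, and then proves a local lemma (a Hamiltonian analogue of Proposition \ref{prop:breakspo}): in Darboux coordinates where the Hamiltonian is $G=x_1$, the perturbation $\tilde G=G+\varepsilon f(x_1)f(y_1)f(r)\tfrac{r^2}{2}$ introduces a small $\partial_\varphi$-twist that breaks the orbits through $(c,0,x_2,y_2)$ for \emph{all} energies $c$ in an interval simultaneously. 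Perturbing the hypersurfaces (equivalently, the Hamiltonian) is the correct way to stay "within the conformal class'' in your language; if you want to salvage your approach, you would need to redo the breaking argument of Proposition \ref{prop:breakspo} as a perturbation of the graph function $g$ (equivalently of $f$) rather than of the contact form, which is in essence the lemma the paper proves. The remaining points you flag (smoothness of $H$ across $Z$, gluing outside $U$, absence of spurious critical values) are indeed routine once that core step is in place.
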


The above theorem thus shows that the Hamiltonian Seifert conjecture does not hold in the singular setting.

At the end of the article, we discuss a generalized version of the singular Weinstein conjecture. While our constructions do not exhibit singular periodic orbits, they do exhibit generalized singular periodic orbits, and in fact, infinitely many of them. In the topological quotient $M/Z$ these orbits (again, together with the singular point) become closed. The natural question, then, is if the following is true.

\begin{conjecture*}[Generalized Weinstein conjecture, Conjecture \ref{conj:gwc}]
  Every Reeb vector field of a (eventually singular) contact manifold admits at least one periodic orbit (away from the critical set $Z$) or a generalized singular periodic orbit.
\end{conjecture*}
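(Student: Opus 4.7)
The plan is to approach the Generalized Weinstein Conjecture via a desingularization and compactness argument, reducing, in dimension three, to Taubes's theorem and, more generally, to the regular Weinstein conjecture assumed as an input. Given a $b$-contact form $\alpha$ on $(M,Z)$, the first step is to construct a family of genuine contact forms $\alpha_\varepsilon$ that agree with $\alpha$ outside an $\varepsilon$-neighborhood $U_\varepsilon$ of $Z$ and are smooth across $Z$. Concretely, in $b$-Darboux coordinates where $\alpha = u\,\frac{dt}{t} + \beta$ near a component of $Z = \{t=0\}$, with $u$ non-vanishing on $Z$ and $\beta$ a smooth $1$-form, one replaces $\frac{dt}{t}$ by $\chi_\varepsilon(t)\,dt$ for a smooth odd function $\chi_\varepsilon$ with $\chi_\varepsilon(t) = 1/t$ for $|t| \geq \varepsilon$; a calibration of $\chi_\varepsilon$ near $t=0$ ensures $\alpha_\varepsilon \wedge d\alpha_\varepsilon \neq 0$ for all small $\varepsilon$.

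The second step applies the regular Weinstein conjecture (Taubes, in dimension three) to each $\alpha_\varepsilon$, producing a closed Reeb orbit $\gamma_\varepsilon \subset M$. One then studies the sequence $\{\gamma_\varepsilon\}$ as $\varepsilon \to 0$. Two scenarios arise: either (a) the orbits $\gamma_\varepsilon$ remain, after passing to a subsequence, in a compact subset of $M \setminus Z$ with uniformly bounded period, in which case Arzelà--Ascoli produces a limit periodic orbit of the original $b$-Reeb flow away from $Z$; or (b) the orbits $\gamma_\varepsilon$ enter $U_\varepsilon$ and their periods diverge. In case (b), one would extract, from suitable arclength parametrizations of long subarcs of $\gamma_\varepsilon$ lying outside $U_\varepsilon$, a limit trajectory of the $b$-Reeb vector field whose $\alpha$- and $\omega$-limit sets are contained in $Z$: precisely a generalized singular periodic orbit.

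The hardest step will be ruling out a pathological third scenario in case (b): the sequence $\gamma_\varepsilon$ could repeatedly approach $Z$, exit, cross the manifold, approach $Z$ elsewhere, and oscillate in such a way that the natural limit is a broken chain of trajectories rather than a single generalized singular orbit. To handle this I would exploit the structural results on the $b$-Reeb flow near $Z$ from \cite{MOP22,FMOP23}, in particular the local model of escape orbits around critical points of the exceptional Hamiltonian and the Hamiltonian character of the flow on $Z$, to argue that at least one component of the limiting broken chain is a full orbit with both ends on $Z$. The abundance of generalized escape orbits granted by Theorem \ref{thmm:nspo} suggests that such orbits are dynamically unavoidable and survive the compactness procedure, yielding the desired generalized singular periodic orbit.

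An alternative, more intrinsic strategy would bypass desingularization by working with the symplectization of $(M \setminus Z, \alpha|_{M \setminus Z})$ together with its asymptotic structure at $Z$. The non-compact ends of this symplectization could be compactified by ``puncturing'' at $Z$, after which one could apply symplectic field theory techniques: an embedded pseudoholomorphic curve with cylindrical ends would yield either a closed Reeb orbit away from $Z$ or a trajectory with both ends limiting on $Z$. Making this rigorous requires a careful analysis of the relevant moduli spaces and a substitute for Fredholm theory near the singular locus, a substantial technical enterprise that nevertheless seems well-suited to the geometric structure of the problem.
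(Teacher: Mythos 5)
The statement you are trying to prove is not a theorem of the paper: it is the \emph{Generalized Weinstein Conjecture}, which the paper formulates and explicitly leaves open, even warning that the unsolved $C^\infty$ volume-preserving Seifert conjecture and the absence of a topological classification of limit sets of $3$-dimensional flows suggest it is a hard problem. So there is no proof in the paper to compare against, and your text is a research programme rather than a proof; as written it contains several genuine gaps.

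Concretely: (i) the desingularization step is unjustified. Writing $\alpha = u\,\frac{dt}{t}+\beta$ near $Z$ and replacing $\frac{dt}{t}$ by $\chi_\varepsilon(t)\,dt$ with $\chi_\varepsilon$ odd gives, at $t=0$, $\alpha_\varepsilon|_Z=\beta|_Z$ and $d\alpha_\varepsilon|_Z=d\beta|_Z$, so the contact condition on $Z$ reduces to $\beta\wedge d\beta\neq 0$ there, which is not implied by the $b$-contact condition (what is guaranteed by Proposition \ref{prop:reebatz} is that $u\,d\beta+\beta\wedge du$ is symplectic on $Z$, a different nondegeneracy); no choice of ``calibration'' of $\chi_\varepsilon$ fixes this pointwise obstruction, and indeed known desingularization results for odd-order ($b^{2k+1}$, in particular $b$-) singularities do not produce contact forms. (ii) Even granting contact forms $\alpha_\varepsilon$ and Taubes orbits $\gamma_\varepsilon$ (which also restricts you to dimension $3$, or forces you to assume the open regular Weinstein conjecture in higher dimensions), nothing prevents $\gamma_\varepsilon$ from lying entirely inside the modified region $U_\varepsilon$ and collapsing onto $Z$ as $\varepsilon\to 0$: the critical set carries infinitely many periodic orbits of the $b$-Reeb field, and the desingularized dynamics near $Z$ naturally shadows them, so the limit may carry no information about $M\setminus Z$ and in particular yield no quasi-closed orbit away from $Z$. (iii) The step you yourself flag as hardest --- excluding that the limit of long orbit arcs is a broken chain of trajectories rather than a single orbit with both its $\alpha$- and $\omega$-limit sets in $Z$ --- is precisely the unresolved core of the conjecture; appealing to the escape-orbit results of \cite{MOP22,FMOP23} or to Theorem \ref{thmm:nspo} does not close it, since those results produce orbits with \emph{one} end on $Z$ (or hold only generically), not the two-ended orbits the conjecture requires. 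The alternative SFT strategy is likewise only a direction, with the compactness and Fredholm theory near $Z$ left entirely open.
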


\subsection*{Organization of the paper}
Section \ref{sec:review} contains a review of the basic notions of $b$-contact geometry, as well as a quick guide through the contemporary literature concerning dynamics of Reeb $b$-vector fields. In Section \ref{sec:tools}, we develop the main tools of this article. Section \ref{sec:counterexample} contains a counterexample to the singular Weinstein conjecture, and Section \ref{sec:bSeifert} contains a counterexample to the singular Hamiltonian Seifert conjecture. We conclude the article with a formulation of the \emph{generalized Weinstein conjecture}.

\subsection*{Acknowledgments} 
We thank Leonid Polterovich for formulating the generalized Weinstein conjecture that we describe in the last section of this article.

\section{Contact and $b$-contact geometry and the state of art on the singular Weinstein conjecture}\label{sec:review}

In this section, we give a brief introduction to $b$-contact geometry. Melrose introduced in \cite{M93} the notion of $b$-manifolds, which later appeared in the work of Nest and Tsygan in the context of deformation quantization on manifolds with boundary \cite{NT96}. The systematic study in the symplectic set-up was carried out in \cite{GMP14}. The language of $b$-manifolds is useful whenever one encounters logarithmic singularities in differential forms along a smooth hypersurface, which could be a boundary. This is particularly interesting as $b^m$-symplectic and $b^m$-contact forms arise when applying the McGehee transformation to study the behavior near infinity of the restricted planar three-body problem, as shown in \cite{invitation, MO21}. Consequently, the study of dynamical properties may lead to a better understanding of the dynamics of these classical problems in celestial mechanics.

A $b$-manifold is a smooth manifold $M$ with an {embedded} smooth hypersurface $Z$, which we shall call the \emph{critical set}. The \emph{$b$-tangent bundle} ${}^{b}TM$ is defined as the vector bundle whose sections are vector fields tangent to $Z$, and call these sections \textit{$b$-vector fields}. By taking the dual of ${}^{b}TM$, we obtain the $b$-cotangent bundle ${}^bT^*M$, which allows us to define $b$-forms of degree $k$ as sections $\omega\in\Gamma(\bigwedge^k({}^bT^*M)):={}^b\Omega^k(M)$. 

Around the critical set $Z$, given a defining function $z$ of $Z$, a $b$-form $\omega$ of degree $k$ decomposes as follows:
\begin{equation}\label{eq:decom}
    \omega = \alpha\wedge\frac{dz}{z} + \beta,
\end{equation}
with $\alpha\in\Omega^{k-1}(M)$ and $\beta\in\Omega^k(M)$.  This illustrates how the framework of $b$-manifolds is useful in dealing with logarithmic singularities along smooth hypersurfaces in differential forms. Furthermore, with this decomposition, one can extend the exterior derivative to $b$-forms:
\begin{equation*}
    d\omega = d\alpha\wedge\frac{dz}{z} + d\beta.
\end{equation*}
We refer the reader to \cite{GMP14} for more details on $b$-manifolds and proofs that all of these objects are well-defined. Equipped with this geometric structure, a $b$-contact form on a $b$-manifold is defined as follows.

\begin{definition}\label{def:b-structures}

        A $b$-contact form is a $b$-form of degree one on an odd-dimensional $b$-manifold $\alpha\in{}^b\Omega^1(M^{2n+1})$ such that $\alpha\wedge (d\alpha)^n$ is non-vanishing as a section of $\bigwedge^{2n+1}({}^bT^*M)$\textcolor{black}{, meaning that this defines a $b$-volume form}. Its kernel $\ker \alpha\subset {}^bTM$ is called a $b$-\emph{contact structure} and the associated \emph{Reeb $b$-vector field} is the unique $b$-vector field $R_\alpha$ such that
        \begin{equation*}
            \begin{cases}
                \iota_{R_\alpha} d\alpha = 0 \\
                \iota_{R_\alpha} \alpha = 1.
            \end{cases}
        \end{equation*}
\end{definition}

In \cite{MO18}, the geometry of these structures is studied, and the following important result concerning the dynamics of the Reeb $b$-vector field on the critical set is shown.

\begin{proposition}[\cite{MO18}]\label{prop:reebatz}
    Let $(M,Z,\alpha)$ be a $b$-contact manifold of dimension $3$, and write $\alpha = f\frac{dz}{z} + \beta$ with $f\in C^\infty(M)$ and $\beta\in\Omega^1(M)$. Then the restriction on $Z$ of the $2$-form $\omega = fd\beta + \beta\wedge df$ is symplectic and the Reeb $b$-vector field $R_\alpha$ is Hamiltonian on $Z$ with respect to $\omega$ with Hamiltonian function $-f|_Z$, i.e. $\iota_{R_\alpha}\omega = df$. The Hamiltonian $-f|_Z$ is called the \emph{exceptional Hamiltonian} associated with $\alpha$.
\end{proposition}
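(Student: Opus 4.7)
The plan is to unpack the defining equations of the Reeb $b$-vector field directly in the local decomposition $\alpha = f\frac{dz}{z} + \beta$ afforded by a defining function $z$ for $Z$. I would first compute
\begin{equation*}
 d\alpha = df\wedge \tfrac{dz}{z} + d\beta,
\end{equation*}
and then expand $\alpha\wedge d\alpha$, noting that $\frac{dz}{z}\wedge\frac{dz}{z}=0$ kills one term, to obtain
\begin{equation*}
 \alpha\wedge d\alpha \;=\; \omega\wedge\tfrac{dz}{z} + \beta\wedge d\beta,\qquad \omega := fd\beta + \beta\wedge df.
\end{equation*}
Since $\alpha\wedge d\alpha$ is a $b$-volume form by Definition \ref{def:b-structures}, its singular part $\omega\wedge\frac{dz}{z}$ must be pointwise nonzero as a section of $\bigwedge^3({}^bT^*M)$ along $Z$. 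This is equivalent to $\omega|_Z$ being a nonvanishing $2$-form on the $2$-dimensional submanifold $Z$, i.e., a symplectic (area) form, giving the first half of the statement.

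For the Hamiltonian identity, since $R_\alpha$ is a $b$-vector field it is tangent to $Z$, so $R_\alpha(z)=zg$ for some smooth function $g$, and one has the clean interpretation $\iota_{R_\alpha}\frac{dz}{z}=g$. Translating $\iota_{R_\alpha}\alpha=1$ into smooth data yields
\begin{equation*}
 fg + \iota_{R_\alpha}\beta = 1,
\end{equation*}
while $\iota_{R_\alpha} d\alpha = 0$, decomposed into its $\frac{dz}{z}$-coefficient and its smooth remainder, produces the two identities
\begin{equation*}
 R_\alpha f = 0, \qquad \iota_{R_\alpha} d\beta = g\, df.
\end{equation*}

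With these three relations in hand, a straightforward computation using the Leibniz rule for interior products gives
\begin{equation*}
 \iota_{R_\alpha}\omega \;=\; f\,\iota_{R_\alpha} d\beta + (\iota_{R_\alpha}\beta)\,df - (R_\alpha f)\,\beta \;=\; (fg + \iota_{R_\alpha}\beta)\,df \;=\; df,
\end{equation*}
so in particular $\iota_{R_\alpha}\omega|_Z = df|_Z$, showing that the restriction of $R_\alpha$ to $Z$ is the Hamiltonian vector field of $-f|_Z$ with respect to $\omega|_Z$.

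The only genuine subtlety is bookkeeping: one must correctly decompose a $b$-form $\eta = h\,\frac{dz}{z} + \tilde\beta$ and remember that $\eta=0$ as a $b$-form forces $h\equiv 0$ and $\tilde\beta\equiv 0$ separately, and one must treat $\iota_{R_\alpha}\frac{dz}{z}$ via the smooth function $g=R_\alpha(z)/z$ rather than naively. Once these conventions are fixed, the proof is a short algebraic manipulation; there is no need to check that the statement is independent of the decomposition, since the proposition fixes a defining function $z$ from the outset.
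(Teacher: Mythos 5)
Your route is the natural one (note the paper itself does not prove this proposition; it quotes it from \cite{MO18}), and the first half is fine: $d\alpha=df\wedge\frac{dz}{z}+d\beta$, $\alpha\wedge d\alpha=\omega\wedge\frac{dz}{z}+\beta\wedge d\beta$, and since the smooth $3$-form $\beta\wedge d\beta$ vanishes along $Z$ as a section of $\bigwedge^3({}^bT^*M)$, non-degeneracy of the $b$-volume forces the pullback of $\omega$ to $Z$ to be an area form. The genuine flaw is the bookkeeping principle you invoke at the end: it is \emph{not} true that a $b$-form $\eta=h\,\frac{dz}{z}+\tilde\beta$ vanishes only if $h\equiv 0$ and $\tilde\beta\equiv 0$ separately. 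The decomposition is not unique away from $Z$; for instance $z\,\frac{dz}{z}-dz=0$. What $\eta=0$ actually forces is $h=zk$ for some smooth $k$ (so only $h|_Z=0$) together with $\tilde\beta=-k\,dz$. Consequently your intermediate identities $R_\alpha f=0$ and $\iota_{R_\alpha}d\beta=g\,df$ are false on $M\setminus Z$ in general, and so is your global conclusion $\iota_{R_\alpha}\omega=df$: for the Reeb $b$-vector field of Lemma \ref{lemma:singularbubbleform} one checks directly that $R_\alpha f\neq 0$ away from the critical sphere. Only the restrictions to $Z$ survive, which is exactly what the proposition claims.

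The repair is short, so the argument is salvageable: from $\iota_{R_\alpha}d\alpha=0$ you get $R_\alpha f=zk$ and $\iota_{R_\alpha}d\beta=g\,df-k\,dz$ with $k$ smooth, and then the same Leibniz computation gives $\iota_{R_\alpha}\omega=df-fk\,dz-zk\,\beta$. Pulling back to $Z$ (where $z=0$ and $dz|_{TZ}=0$, and where contraction with $R_\alpha$ commutes with pullback because $R_\alpha$ is tangent to $Z$) yields $\iota_{R_\alpha}\omega|_Z=df|_Z$, i.e.\ $R_\alpha|_Z$ is Hamiltonian for $-f|_Z$ with respect to the symplectic form $\omega|_Z$. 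Equivalently, you may evaluate all identities only at points of $Z$, where the coefficient of $\frac{dz}{z}$ \emph{is} well defined; either way the conclusion stands, but as written your claimed identities on all of $M$ are too strong.
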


The authors observe in the sequel \cite[Proposition 6.1]{MO21} that this implies that there are always infinitely many periodic orbits on the critical set. Understanding the Reeb dynamics and the existence of periodic orbits outside of the critical set turns out to be much more intriguing: for instance, it is shown that the unit sphere in the standard $b$-symplectic $\mathbb{R}^{4}$ has an induced $b$-contact form, whose Reeb $b$-vector field does not have any periodic orbits outside of the critical set (see also Lemma \ref{lemma:singularbubbleform}, where a variation of this example is computed).

Due to the lack of classical dynamical invariants outside of the critical set, the authors introduced so-called singular periodic orbits and conjectured that this dynamical feature is a topological invariant on compact $b$-contact manifolds.

\begin{definition}\label{def:spo}
    Let $(M,Z,\alpha)$ be a $b$-contact manifold. A \textit{singular periodic orbit} is an integral curve $\gamma:\mathbb{R}\rightarrow M\setminus Z$ of the Reeb $b$-vector field such that $\lim_{t\rightarrow \pm\infty} \gamma(t) = p_\pm\in Z$ (see Figure \ref{fig:spodef}).
\end{definition}

The conjecture, as stated in \cite{MO21} claims the following:

\begin{conjecture}[The singular Weinstein conjecture, \cite{MO21}]
    Let $(M, Z, \alpha)$ be a compact $b^m$-contact manifold. Then there exists at least one singular periodic orbit or one periodic orbit outside of the critical set.
\end{conjecture}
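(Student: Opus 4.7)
The plan is to combine the Hamiltonian dynamics on the critical set $Z$ with a compactness-based flow analysis on $M\setminus Z$. By Proposition \ref{prop:reebatz}, the restriction of $R_\alpha$ to each component of $Z$ is the Hamiltonian vector field of $-f|_Z$; since $Z$ is a compact surface, the exceptional Hamiltonian necessarily has critical points, at each of which the Reeb $b$-vector field vanishes in the $b$-sense. The aim is to produce either (a) a closed Reeb orbit in $M\setminus Z$, or (b) an integral curve whose forward and backward time limits are both \emph{single} critical points of $-f|_Z$, i.e., a singular periodic orbit in the sense of Definition \ref{def:spo}.

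First, I would exploit the local normal form analysis of \cite{FMOP23} at critical points of the exceptional Hamiltonian: linearizing $R_\alpha$ in suitable $b$-coordinates identifies stable and unstable invariant sets transverse to $Z$, producing a non-trivial stock of escape orbits whose $\alpha$- or $\omega$-limit is exactly such a critical point. The next step is to follow an escape orbit in forward time inside the compact ambient $M$ and examine its $\omega$-limit set $\Omega$. If $\Omega\cap(M\setminus Z)$ is non-empty, one would extract a periodic orbit by a minimal-set and Poincar\'e recurrence argument, settling case (a). Otherwise $\Omega\subset Z$, and one is reduced to determining which invariant subsets of the Hamiltonian flow of $-f|_Z$ can arise as asymptotic limits of Reeb trajectories approaching the critical set.

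The key task in closing the argument is to rule out the possibility that $\Omega$ is a non-trivial periodic orbit of the Hamiltonian flow on $Z$ (or a larger heteroclinic set), and to force $\Omega = \{q\}$ for some critical point $q$ of $-f|_Z$. The most natural approach is to construct a Lyapunov-type function on a tubular neighborhood of $Z$ adapted to the $b$-form decomposition \eqref{eq:decom}, forcing the transverse dynamics to decouple asymptotically from the intrinsic Hamiltonian flow on $Z$. Under such a rigidity statement, pairing an incoming escape direction at $p$ with an outgoing escape direction at $q$ yields a singular periodic orbit, finishing case (b).

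The main obstacle, and the fragile link of this plan, is precisely the last rigidity step. The Hamiltonian flow on $Z$ generically carries periodic orbits in every neighborhood of a critical point of $-f|_Z$, and there is no obvious reason why a Reeb trajectory approaching $Z$ must settle on a single fixed point rather than spiral asymptotically toward such a nearby closed curve on $Z$. Thus the argument outlined above naturally yields only \emph{generalized} singular periodic orbits; upgrading these to genuine singular periodic orbits would require additional hyperbolicity or non-resonance hypotheses on the critical points of the exceptional Hamiltonian. Whether such hypotheses can be derived purely from $b$-contactness (and hence whether the conjecture holds without modification) is, to my mind, exactly the point where one should expect the statement either to be salvaged or to break down.
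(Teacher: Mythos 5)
There is a fundamental problem: the statement you are trying to prove is false, and the paper's contribution concerning it is a \emph{counterexample}, not a proof. Theorem \ref{thm:counterexample} constructs a $b$-contact form on $(\mathbb{S}^3,\mathbb{S}^2)$ whose Reeb $b$-vector field has no singular periodic orbits and no periodic orbits away from $Z$. The construction starts from the Hopf $b$-vector field of Section \ref{sec:bhopf}, which has exactly two singular periodic orbits, no other escape orbits, and no periodic orbits outside the critical sphere (Proposition \ref{prop:bHopforbits}), and then applies the local perturbation of Proposition \ref{prop:breakspo} to break both singular periodic orbits by an arbitrarily $C^\infty$-small twist supported in a compact set away from $Z$. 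After the perturbation every orbit outside $Z$ still has its $\alpha$- and $\omega$-limit sets on the critical sphere, but these limit sets are \emph{periodic orbits} of the exceptional Hamiltonian flow on $Z$ (parallels of $\mathbb{S}^2$), not single zeros, so no singular periodic orbit and no new periodic orbit can appear. Hence no proof along your lines (or any other) can close.

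To your credit, your final paragraph puts its finger on exactly the step that fails and exactly the mechanism the counterexample exploits: a Reeb trajectory approaching $Z$ can spiral asymptotically toward a closed orbit of the Hamiltonian flow of $-f|_Z$ rather than converge to a critical point, and $b$-contactness alone provides no Lyapunov-type rigidity ruling this out. Your minimal-set argument in case (a) also does not deliver a closed orbit: a minimal invariant set of a $3$-dimensional flow need not contain a periodic orbit (this is the same obstruction as in the smooth Weinstein/Seifert problems, where compactness plus recurrence is insufficient and the known proofs use holomorphic-curve or Floer-theoretic input unavailable here). What survives of your scheme is precisely the weaker conclusion you anticipated: the existence of generalized singular periodic orbits, i.e.\ the \emph{generalized Weinstein conjecture} (Conjecture \ref{conj:gwc}), which the paper leaves open; the paper also shows (Proposition \ref{prop:genericSPOonZ}) that when a component of $Z$ has positive genus the original conjecture does hold generically, via saddle connections of the exceptional Hamiltonian on $Z$ itself rather than via transverse dynamics.
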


The first steps towards this conjecture were obtained in \cite{MOP22, FMOP23}, where a semi-local version of this conjecture was considered (Theorem \ref{thm:2norinfty}). 

\begin{definition}
    An \textit{escape orbit} is an integral curve $\gamma:\mathbb{R}\rightarrow M\setminus Z$ of the Reeb $b$-vector field such that at least one of the semi-orbits has a stationary limit point on $Z$ (see Figure \ref{fig:spodef}).
\end{definition}

\begin{figure}[ht]
    \centering
        \tikzset{every picture/.style={line width=0.75pt}} 

\begin{tikzpicture}[x=0.75pt,y=0.75pt,yscale=-1,xscale=1]

\draw [color={rgb, 255:red, 4; green, 4; blue, 4 }  ,draw opacity=1 ]   (148.75,197.5) .. controls (144.47,126.09) and (163.03,149.08) .. (148.44,109.12) ;
\draw [shift={(147.75,107.25)}, rotate = 69.57] [color={rgb, 255:red, 4; green, 4; blue, 4 }  ,draw opacity=1 ][line width=0.75]    (10.93,-3.29) .. controls (6.95,-1.4) and (3.31,-0.3) .. (0,0) .. controls (3.31,0.3) and (6.95,1.4) .. (10.93,3.29)   ;
\draw  [color={rgb, 255:red, 255; green, 0; blue, 0 }  ,draw opacity=1 ] (100,153.25) .. controls (100,127.71) and (121.71,107) .. (148.5,107) .. controls (175.29,107) and (197,127.71) .. (197,153.25) .. controls (197,178.79) and (175.29,199.5) .. (148.5,199.5) .. controls (121.71,199.5) and (100,178.79) .. (100,153.25) -- cycle ;
\draw [color={rgb, 255:red, 255; green, 0; blue, 0 }  ,draw opacity=1 ] [dash pattern={on 4.5pt off 4.5pt}]  (100,153.25) .. controls (118,163.5) and (164,170.5) .. (197,153.25) ;
\draw  [fill={rgb, 255:red, 3; green, 0; blue, 0 }  ,fill opacity=1 ] (146,107.25) .. controls (146,106.28) and (146.78,105.5) .. (147.75,105.5) .. controls (148.72,105.5) and (149.5,106.28) .. (149.5,107.25) .. controls (149.5,108.22) and (148.72,109) .. (147.75,109) .. controls (146.78,109) and (146,108.22) .. (146,107.25) -- cycle ;
\draw  [fill={rgb, 255:red, 3; green, 0; blue, 0 }  ,fill opacity=1 ] (147,199.25) .. controls (147,198.28) and (147.78,197.5) .. (148.75,197.5) .. controls (149.72,197.5) and (150.5,198.28) .. (150.5,199.25) .. controls (150.5,200.22) and (149.72,201) .. (148.75,201) .. controls (147.78,201) and (147,200.22) .. (147,199.25) -- cycle ;
\draw    (156,88.6) .. controls (147.26,90.88) and (147.59,92.17) .. (147.73,103.61) ;
\draw [shift={(147.75,105.5)}, rotate = 272.68] [color={rgb, 255:red, 0; green, 0; blue, 0 }  ][line width=0.75]    (10.93,-3.29) .. controls (6.95,-1.4) and (3.31,-0.3) .. (0,0) .. controls (3.31,0.3) and (6.95,1.4) .. (10.93,3.29)   ;
\draw  [dash pattern={on 4.5pt off 4.5pt}]  (156,88.6) -- (187.2,80.6) ;

\end{tikzpicture}
        \caption{A \textit{singular periodic orbit} whose $\omega-$ (respectively $\alpha-$) limit is the northpole (respectively southpole) and an \textit{escape orbit} whose $\omega$-limit is the northpole.}
        \label{fig:spodef}
\end{figure}
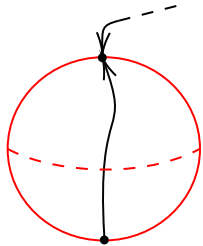

Escape orbits can thus be understood as a semi-local version around the critical set of singular periodic orbits (i.e., every singular periodic orbit is an escape orbit; the opposite does not hold as two escape orbits do not necessarily close up to form a singular periodic orbit).

\begin{theorem}[\cite{MOP22,FMOP23}]\label{thm:2norinfty}
Let $Z$ be a compact embedded surface in a $3$-dimensional manifold $M$. Then for a generic\footnote{Here the definition of \emph{genericity} is different in \cite{MOP22} and \cite{FMOP23}, see the respective references for the precise definition.} $b$-contact form having $Z$ as critical set, the associated Reeb $b$-vector field has at least $2N$ escape orbits, and infinitely many if the first Betti number of $Z$ is positive.
\end{theorem}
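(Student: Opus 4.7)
The plan is to combine Proposition \ref{prop:reebatz} with a local linearization of $R_\alpha$ near its stationary points on $Z$, fed by elementary Morse theory on the compact surface $Z$.

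First, by Proposition \ref{prop:reebatz}, the stationary points of $R_\alpha|_Z$ are precisely the critical points of the exceptional Hamiltonian $-f|_Z$. For a generic $b$-contact form, $f|_Z$ is Morse, so it has at least two critical points on every connected component of $Z$, giving at least $2N$ stationary points in total when $Z$ has $N$ connected components. If $b_1(Z)>0$, the Morse inequalities additionally force the existence of critical points of index $1$ (saddles on the surface).

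Second, I would analyze the linearization of $R_\alpha$ at each stationary point $p\in Z$. In local coordinates $(x_1,x_2,z)$ with $Z=\{z=0\}$ and $\alpha = f\frac{dz}{z}+\beta$, one solves $\iota_{R_\alpha}\alpha=1$, $\iota_{R_\alpha}d\alpha=0$ and obtains $R_\alpha$ as a smooth vector field whose linearization at $p$ splits into a $2\times 2$ block tangent to $Z$ (encoding the Hamiltonian flow of $-f|_Z$ from Proposition \ref{prop:reebatz}) and a $1\times 1$ block in the transverse direction. For a generic $\alpha$ the transverse eigenvalue is nonzero, so $p$ admits a one-dimensional stable or unstable manifold meeting $M\setminus Z$ transversally. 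The non-equilibrium orbits on these transverse invariant manifolds are escape orbits by definition, yielding one for each of the $\geq 2N$ equilibria.

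Third, to upgrade $2N$ to infinitely many escape orbits when $b_1(Z)>0$, I would exploit a saddle critical point $p$ of $f|_Z$. Its stable and unstable manifolds $W^s(p), W^u(p)\subset Z$ for the Hamiltonian flow on $Z$ are one-dimensional, and combining them with the transverse direction from the previous step yields two-dimensional stable and unstable sets for $R_\alpha$ in $M$. A $\lambda$-lemma applied to a $b$-normal form near $p$ shows that any orbit in $M\setminus Z$ whose projection to $Z$ approaches $W^s(p)$ actually asymptotes to $p$ along the transverse direction, provided it starts on the correct transverse side. When $b_1(Z)>0$ one can arrange the Hamiltonian separatrices of $p$ to be recurrent on $Z$ (a generic feature on positive-genus surfaces), producing countably many points of $W^s(p)$ near which transverse asymptotic orbits can be threaded, hence an infinite family of distinct escape orbits converging to $p$.

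The hardest step will be the last one: verifying that the infinite family produced this way consists of genuinely distinct orbits (rather than reparametrizations of a single one), and more importantly controlling the interaction between the global dynamics on $Z$ and the transverse direction. This forces a careful $b$-normal form for $R_\alpha$ near a saddle of $f|_Z$, analogous to the one used in \cite{FMOP23}, together with a shadowing-type argument ensuring that each candidate orbit actually converges to $p$ in $M$ rather than being ejected away from a neighborhood of $Z$.
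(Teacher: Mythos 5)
First, note that the paper does not prove this statement: it is imported from \cite{MOP22,FMOP23}, so your proposal can only be compared with the strategy of those references (summarized in the introduction of this paper). Your Steps 1--2 are essentially the right mechanism and match that strategy: by Proposition \ref{prop:reebatz} the zeros of $R_\alpha$ on $Z$ are the critical points of the exceptional Hamiltonian, generically Morse, and the linearization at such a zero splits into a tangential Hamiltonian block plus a transverse eigenvalue. In fact you can say more than ``generically nonzero'': writing $\alpha=f\frac{dz}{z}+\beta$ and $R_\alpha=az\partial_z+X$ with $X(p)=0$, the condition $\iota_{R_\alpha}\alpha=1$ gives $a(p)=1/f(p)$, and $f(p)\neq 0$ at a critical point of $f|_Z$ because otherwise the $2$-form of Proposition \ref{prop:reebatz} would degenerate at $p$; so the transverse eigenvalue is automatically nonzero. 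Two small points you should still address in the $2N$ count: $N$ must be the number of connected components of $Z$, and you need to rule out (generically, or by counting both branches of the transverse invariant curve) that one orbit is shared by two zeros, since an orbit heteroclinic between two zeros is a single escape orbit.

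The genuine gap is in Step 3. The $\lambda$-lemma does not do what you want: an orbit of $R_\alpha$ whose projection merely approaches the stable separatrix $W^s(p)\subset Z$ passes near the saddle and is then expelled along the unstable direction; it does \emph{not} asymptote to $p$, no matter which transverse side it starts on. Likewise, recurrence of the separatrices on a positive-genus surface is neither generic for a \emph{Hamiltonian} surface flow (separatrices of a Morse Hamiltonian are homoclinic or heteroclinic connections, not recurrent) nor needed. Also, only one of the two sets you build is two-dimensional: with tangential eigenvalues $\pm\mu$ and transverse eigenvalue $\lambda=1/f(p)$, the $2$-dimensional invariant manifold is the unstable one if $\lambda>0$ and the stable one if $\lambda<0$, while the other remains the $1$-dimensional separatrix inside $Z$. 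The correct (and much shorter) conclusion, which is the route of \cite{FMOP23}, is already contained in the first half of your Step 3: when $b_1(Z)>0$ the Morse inequalities force an index-$1$ critical point $p$, the corresponding $2$-dimensional (un)stable manifold of $p$ is transverse to $Z$ along the separatrix, every orbit on it outside $Z$ stays in $M\setminus Z$ (since $Z$ is invariant) and converges to the single point $p$ in forward or backward time, so this manifold alone carries an infinite (indeed one-parameter) family of pairwise distinct escape orbits. No shadowing or threading argument is needed, and the distinctness worry disappears because distinct orbits of a flow on a $2$-dimensional invariant manifold are automatically not reparametrizations of each other.
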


The escape to ``infinity" (the critical set) of a singular periodic orbit follows a distinctive pattern, where both its $\omega$- and $\alpha$-limit sets converge to a single point $p_\pm\in Z$. Since the limit sets of a vector field remain invariant, the points $p_\pm$ must be zeros of the field. Besides the notions of escape and singular periodic orbits, it is useful to consider generalized escape orbits and generalized singular periodic orbits. Such orbits appear naturally in several problems in celestial mechanics including that of Sitknikov \cite{sitnikov}. 

\begin{definition}\label{def:gspo}
A \textit{generalized escape orbit} is an integral curve $\gamma:\mathbb{R}\rightarrow M\setminus Z$ of the Reeb $b$-vector field such that there exist times $t_1<t_2<\cdots<t_k\to \infty$ or $t_{-1}>t_{-2}>\cdots >t_{-k}\to -\infty$ satisfying $\gamma(t_{\pm k})\to p\in Z$. Notably, $p$ need not be a zero of the field. Alternatively, $\gamma$ qualifies as a \textit{generalized singular periodic orbit} if \textit{both} its $\alpha$- and $\omega$-limit sets are \textit{contained in} the critical set $Z$.
\end{definition}

Specific instances of generalized escape periodic orbits, such as \emph{oscillatory motions} detailed in \cite{sitnikov,llibre,GMS}  have garnered significant attention. Figure \ref{fig:gardenoforbits} below shows several (generalized) singular periodic orbits and escape orbits.

\begin{figure}[H]
\begin{center}
\tikzset{every picture/.style={line width=0.75pt}} 

\begin{tikzpicture}[x=0.75pt,y=0.75pt,yscale=-1.5,xscale=1.5]

\draw  [color={rgb, 255:red, 255; green, 0; blue, 0 }  ,draw opacity=1 ] (231.91,210.43) .. controls (208.76,210.61) and (189.8,192.51) .. (189.57,170.01) .. controls (189.34,147.51) and (207.93,129.11) .. (231.09,128.93) .. controls (254.25,128.75) and (273.21,146.85) .. (273.43,169.35) .. controls (273.66,191.86) and (255.07,210.25) .. (231.91,210.43) -- cycle ;
\draw [color={rgb, 255:red, 255; green, 0; blue, 0 }  ,draw opacity=1 ] [dash pattern={on 4.5pt off 4.5pt}]  (231.91,210.43) .. controls (237,190) and (236,153) .. (231.09,128.93) ;
\draw  [fill={rgb, 255:red, 3; green, 0; blue, 0 }  ,fill opacity=1 ] (189.82,172.11) .. controls (188.94,172.12) and (188.23,171.46) .. (188.22,170.65) .. controls (188.21,169.84) and (188.91,169.18) .. (189.79,169.17) .. controls (190.67,169.16) and (191.38,169.81) .. (191.39,170.63) .. controls (191.4,171.44) and (190.7,172.1) .. (189.82,172.11) -- cycle ;
\draw    (172.83,163.84) .. controls (174.78,170.51) and (179.47,170.55) .. (183.81,170.7) ;
\draw [shift={(185.8,170.8)}, rotate = 185.38] [color={rgb, 255:red, 0; green, 0; blue, 0 }  ][line width=0.75]    (6.56,-1.97) .. controls (4.17,-0.84) and (1.99,-0.18) .. (0,0) .. controls (1.99,0.18) and (4.17,0.84) .. (6.56,1.97)   ;
\draw  [dash pattern={on 4.5pt off 4.5pt}]  (172.83,163.84) -- (165.31,137.68) ;
\draw  [color={rgb, 255:red, 255; green, 0; blue, 0 }  ,draw opacity=1 ] (372.32,208.99) .. controls (349.16,209.17) and (330.2,191.08) .. (329.97,168.57) .. controls (329.75,146.07) and (348.34,127.68) .. (371.49,127.49) .. controls (394.65,127.31) and (413.61,145.41) .. (413.84,167.91) .. controls (414.07,190.42) and (395.48,208.81) .. (372.32,208.99) -- cycle ;
\draw [color={rgb, 255:red, 255; green, 0; blue, 0 }  ,draw opacity=1 ] [dash pattern={on 4.5pt off 4.5pt}]  (372.32,208.99) .. controls (377,193) and (378,158) .. (371.5,127.5) ;
\draw  [color={rgb, 255:red, 255; green, 0; blue, 0 }  ,draw opacity=1 ] (393.89,203.66) .. controls (391.72,203.67) and (389.81,188.02) .. (389.61,168.69) .. controls (389.41,149.36) and (391.01,133.68) .. (393.18,133.66) .. controls (395.35,133.64) and (397.27,149.3) .. (397.47,168.63) .. controls (397.66,187.96) and (396.06,203.64) .. (393.89,203.66) -- cycle ;
\draw  [color={rgb, 255:red, 255; green, 0; blue, 0 }  ,draw opacity=1 ] (395.8,154.54) -- (396.52,147.69) -- (398.28,154.02) ;
\draw  [color={rgb, 255:red, 255; green, 0; blue, 0 }  ,draw opacity=1 ] (388.51,178.88) -- (390.34,185.14) -- (390.98,178.26) ;

\draw    (435.06,139.03) .. controls (429.95,117.93) and (432,216) .. (426,198) .. controls (420.12,180.36) and (413.28,109.71) .. (406.04,134.8) ;
\draw [shift={(405.59,136.46)}, rotate = 284.01] [color={rgb, 255:red, 0; green, 0; blue, 0 }  ][line width=0.75]    (4.37,-1.96) .. controls (2.78,-0.92) and (1.32,-0.27) .. (0,0) .. controls (1.32,0.27) and (2.78,0.92) .. (4.37,1.96)   ;
\draw [shift={(430.73,170.3)}, rotate = 272.6] [color={rgb, 255:red, 0; green, 0; blue, 0 }  ][line width=0.75]    (4.37,-1.96) .. controls (2.78,-0.92) and (1.32,-0.27) .. (0,0) .. controls (1.32,0.27) and (2.78,0.92) .. (4.37,1.96)   ;
\draw [shift={(417.83,157.13)}, rotate = 79.53] [color={rgb, 255:red, 0; green, 0; blue, 0 }  ][line width=0.75]    (4.37,-1.96) .. controls (2.78,-0.92) and (1.32,-0.27) .. (0,0) .. controls (1.32,0.27) and (2.78,0.92) .. (4.37,1.96)   ;
\draw  [dash pattern={on 4.5pt off 4.5pt}]  (447,149.24) .. controls (439.55,252.3) and (439.86,170.79) .. (435.06,139.03) ;
\draw  [color={rgb, 255:red, 255; green, 0; blue, 0 }  ,draw opacity=1 ] (252.92,205.26) .. controls (250.75,205.28) and (248.83,189.4) .. (248.64,169.8) .. controls (248.44,150.19) and (250.04,134.28) .. (252.21,134.27) .. controls (254.38,134.25) and (256.3,150.13) .. (256.49,169.73) .. controls (256.69,189.34) and (255.09,205.25) .. (252.92,205.26) -- cycle ;
\draw  [color={rgb, 255:red, 255; green, 0; blue, 0 }  ,draw opacity=1 ] (254.82,155.44) -- (255.55,148.5) -- (257.3,154.92) ;
\draw  [color={rgb, 255:red, 255; green, 0; blue, 0 }  ,draw opacity=1 ] (247.54,180.13) -- (249.37,186.49) -- (250.01,179.51) ;

\draw    (300,137) .. controls (298,148) and (290,212) .. (285,201) .. controls (280.1,190.22) and (272.33,110.68) .. (265.06,135.42) ;
\draw [shift={(264.62,137.07)}, rotate = 284.01] [color={rgb, 255:red, 0; green, 0; blue, 0 }  ][line width=0.75]    (4.37,-1.96) .. controls (2.78,-0.92) and (1.32,-0.27) .. (0,0) .. controls (1.32,0.27) and (2.78,0.92) .. (4.37,1.96)   ;
\draw [shift={(294.22,173.86)}, rotate = 279.75] [color={rgb, 255:red, 0; green, 0; blue, 0 }  ][line width=0.75]    (4.37,-1.96) .. controls (2.78,-0.92) and (1.32,-0.27) .. (0,0) .. controls (1.32,0.27) and (2.78,0.92) .. (4.37,1.96)   ;
\draw [shift={(276.85,159.88)}, rotate = 79.73] [color={rgb, 255:red, 0; green, 0; blue, 0 }  ][line width=0.75]    (4.37,-1.96) .. controls (2.78,-0.92) and (1.32,-0.27) .. (0,0) .. controls (1.32,0.27) and (2.78,0.92) .. (4.37,1.96)   ;
\draw  [color={rgb, 255:red, 255; green, 0; blue, 0 }  ,draw opacity=1 ] (350.29,203.5) .. controls (348.12,203.52) and (346.21,187.86) .. (346.01,168.53) .. controls (345.81,149.2) and (347.42,133.52) .. (349.59,133.5) .. controls (351.76,133.48) and (353.67,149.14) .. (353.87,168.47) .. controls (354.06,187.8) and (352.46,203.48) .. (350.29,203.5) -- cycle ;
\draw  [color={rgb, 255:red, 255; green, 0; blue, 0 }  ,draw opacity=1 ] (352.2,154.38) -- (352.93,147.53) -- (354.68,153.86) ;
\draw  [color={rgb, 255:red, 255; green, 0; blue, 0 }  ,draw opacity=1 ] (344.91,178.72) -- (346.74,184.99) -- (347.38,178.11) ;

\draw    (343,134) .. controls (337.8,123.54) and (341.17,147.61) .. (340,163) .. controls (338.83,178.39) and (333,197) .. (328,204) .. controls (323.08,210.9) and (306.51,97.48) .. (300.28,135.17) ;
\draw [shift={(300,137)}, rotate = 277.94] [color={rgb, 255:red, 0; green, 0; blue, 0 }  ][line width=0.75]    (4.37,-1.96) .. controls (2.78,-0.92) and (1.32,-0.27) .. (0,0) .. controls (1.32,0.27) and (2.78,0.92) .. (4.37,1.96)   ;
\draw [shift={(340.18,147.97)}, rotate = 268.67] [color={rgb, 255:red, 0; green, 0; blue, 0 }  ][line width=0.75]    (4.37,-1.96) .. controls (2.78,-0.92) and (1.32,-0.27) .. (0,0) .. controls (1.32,0.27) and (2.78,0.92) .. (4.37,1.96)   ;
\draw [shift={(335.44,186.65)}, rotate = 286.38] [color={rgb, 255:red, 0; green, 0; blue, 0 }  ][line width=0.75]    (4.37,-1.96) .. controls (2.78,-0.92) and (1.32,-0.27) .. (0,0) .. controls (1.32,0.27) and (2.78,0.92) .. (4.37,1.96)   ;
\draw [shift={(314.79,158.72)}, rotate = 76.44] [color={rgb, 255:red, 0; green, 0; blue, 0 }  ][line width=0.75]    (4.37,-1.96) .. controls (2.78,-0.92) and (1.32,-0.27) .. (0,0) .. controls (1.32,0.27) and (2.78,0.92) .. (4.37,1.96)   ;
\draw [color={rgb, 255:red, 4; green, 4; blue, 4 }  ,draw opacity=1 ]   (271.62,169.32) .. controls (206.92,173.43) and (231.09,157.28) .. (195.09,169.81) ;
\draw [shift={(193.4,170.4)}, rotate = 340.49] [color={rgb, 255:red, 4; green, 4; blue, 4 }  ,draw opacity=1 ][line width=0.75]    (6.56,-1.97) .. controls (4.17,-0.84) and (1.99,-0.18) .. (0,0) .. controls (1.99,0.18) and (4.17,0.84) .. (6.56,1.97)   ;
\draw  [fill={rgb, 255:red, 3; green, 0; blue, 0 }  ,fill opacity=1 ] (273.22,170.78) .. controls (272.35,170.78) and (271.63,170.13) .. (271.62,169.32) .. controls (271.61,168.51) and (272.32,167.84) .. (273.19,167.84) .. controls (274.07,167.83) and (274.79,168.48) .. (274.8,169.29) .. controls (274.8,170.11) and (274.1,170.77) .. (273.22,170.78) -- cycle ;

\end{tikzpicture}
\end{center}
 \caption{From left to right, an escape orbit, a singular periodic orbit, a generalized singular periodic orbit, and a generalized escape orbit. The critical set is the union of the two spheres colored red, and the red orbits are periodic orbits on the critical set.}\label{fig:gardenoforbits}
    
\end{figure}
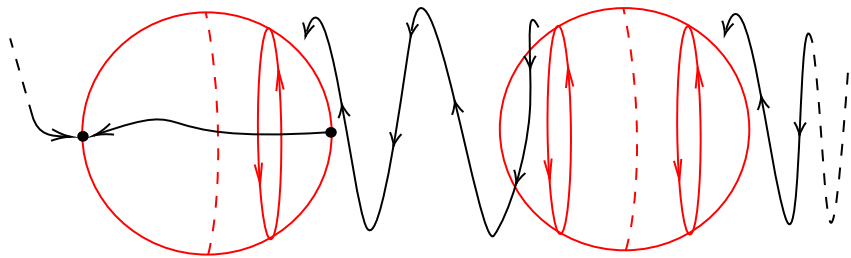


\section{$b$-contact manifolds with a prescribed number of singular periodic orbits}\label{sec:tools}

In this section, we develop tools to construct $b$-contact forms with prescribed dynamical properties in dimension $3$. More specifically, we show that we can modify $3$-dimensional (smooth) contact manifolds by adding closed singular hypersurfaces, in such a way that the Reeb $b$-vector field has a prescribed number of singular periodic orbits (see Definition \ref{def:spo}). The tools developed here, particularly Proposition \ref{prop:breakspo}, are used later in Sections \ref{sec:counterexample} and \ref{sec:bSeifert} to construct the counterexamples to the singular Weinstein and Seifert conjectures.


We structure this section in three parts. First, we show that there is a $b$-contact form on an open $3$-ball (with critical set given by a $2$-sphere) with exactly one singular periodic orbit, and we show that we can extend its contact structure with the standard contact structure outside of a sphere centered at the origin. We call these $3
$-balls \textit{singular bubbles} (the reason for the name will become clear shortly). In the second part, we show how we can replace Darboux neighborhoods in contact manifold with these `singular bubbles' to add singular periodic orbits to the flow. Finally, we conclude by detailing how to break off any undesired singular periodic orbits that add to the total count. We thus obtain the following. 

\begin{theorem}\label{thm:nspo}
Let $(M,\xi)$ be a coorientable $3$-dimensional contact manifold. Then for any integers $0\leq k\leq N$, there exists a $b$-contact form on $M$ whose critical set $Z$ consists of $N$ components diffeomorphic to $\mathbb S^2$ and the number of singular periodic orbits is exactly $k$. The associated $b$-contact structure coincides with $\xi$ outside a neighborhood of the balls enclosed by $Z$. Furthermore, there is an infinite number of generalized escape orbits converging to each component of $Z$ (i.e., orbits whose $\alpha$- or $\omega$-limit sets are on $Z$ but are not singular points).
\end{theorem}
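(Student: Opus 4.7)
The plan follows the three-step outline given at the beginning of the section. The argument proves the theorem by assembling the local-model construction of a singular bubble, a gluing step using the contact Darboux theorem, and a local breaking lemma.

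\textbf{Step 1: the singular bubble.} The first task is to exhibit a local model: a $b$-contact form $\alpha_{\mathrm{bubble}}$ on an open $3$-ball $B\subset\R^3$ whose critical set is a round sphere $Z\cong\mathbb S^2$ centered at the origin, which (i) agrees with the standard Darboux form $\alpha_{\mathrm{std}}=dz+x\,dy$ outside a slightly larger concentric sphere, and (ii) has associated Reeb $b$-vector field with exactly one singular periodic orbit. A rotationally symmetric ansatz $\alpha = \tfrac{h(r)}{r-r_0}\,dr+\beta$, where $r_0$ is the radius of $Z$ and $\beta$ is a contact-type correction interpolating to $\alpha_{\mathrm{std}}$ in an annular collar, is the natural candidate. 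By Proposition \ref{prop:reebatz}, the dynamics on $Z$ is governed by the exceptional Hamiltonian $-f|_Z$; choosing $f|_Z$ to be a Morse height function on $\mathbb S^2$ with exactly two critical points localizes the zeros of the Reeb field to a north and a south pole, and an axial orbit running between them plays the role of the unique singular periodic orbit. The infinitude of generalized escape orbits converging to $Z$ is then a byproduct of the semi-local analysis of \cite{MOP22,FMOP23} around these two zeros.

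\textbf{Step 2: inserting $N$ bubbles.} Since $\xi$ is coorientable, the contact Darboux theorem furnishes $N$ pairwise disjoint open balls in $M$ on each of which $(M,\xi)$ is contactomorphic to $(\R^3,\ker\alpha_{\mathrm{std}})$. Surgically replace each such chart with a scaled copy of the singular bubble from Step 1. Because the bubble form coincides with $\alpha_{\mathrm{std}}$ near its boundary, the resulting global $1$-form is a $b$-contact form on $M$ whose critical set has $N$ components, each diffeomorphic to $\mathbb S^2$, and whose underlying $b$-contact structure equals $\xi$ outside the union of the bubbles. By construction, the Reeb $b$-vector field has exactly $N$ singular periodic orbits, one per bubble.

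\textbf{Step 3: breaking orbits.} To realize an arbitrary target $0\le k\le N$, apply the breaking lemma (Proposition \ref{prop:breakspo}) to $N-k$ of the bubbles. That proposition supplies a perturbation supported in an arbitrarily small tubular neighborhood of a chosen singular periodic orbit $\gamma$ which (a) preserves the $b$-contact condition, (b) destroys $\gamma$ as a singular periodic orbit, and (c) leaves the flow unchanged outside that neighborhood. Iterating the procedure on $N-k$ bubbles yields a $b$-contact form with exactly $k$ singular periodic orbits; the infinite family of generalized escape orbits is preserved, because the escape phenomenon is detected by the semi-local behavior of the field near the zeros of $-f|_Z$, which is unaffected by a perturbation supported away from $Z$ along a single interior orbit.

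The main obstacle is Step 3: one must ensure that destroying $\gamma$ neither creates a replacement singular periodic orbit (for instance by rerouting a nearby escape orbit into the opposite polar zero) nor violates the $b$-contact condition $\alpha\wedge d\alpha\neq 0$. The strategy is to work in a flow box around $\gamma$ and perturb along a transverse disk so that the unique axial orbit is bent off its axis, with its $\omega$-limit pushed to a non-stationary point of $Z$, turning $\gamma$ into a mere generalized escape orbit. The explicit rotational form of the singular bubble makes the transverse Poincar\'e map computable by hand, so the non-creation of new singular periodic orbits can be verified by tracking the images of the transverse disk under the modified flow, while keeping the perturbation $C^0$-small enough to retain the $b$-contact inequality.
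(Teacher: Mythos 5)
Your outline reproduces the paper's three-step strategy (bubble, transplant, break), and your Step 3 is essentially the paper's Proposition \ref{prop:breakspo} — including the correct resolution of the matching issue, namely that the incoming and outgoing semi-orbits are mismatched by a small rotation and that the perturbation must be supported away from all other escape orbits. The genuine gap is in Steps 1–2. You assert that the bubble form can be arranged to \emph{coincide} with the Darboux form $dz+x\,dy$ outside a slightly larger sphere, "interpolating to $\alpha_{\mathrm{std}}$ in an annular collar". This is exactly the nontrivial point, and it is not something you can get by an ansatz: interpolating between two contact forms that do not already define the same contact structure generically destroys the contact condition. The paper's explicit bubble form (Lemma \ref{lemma:singularbubbleform}) does \emph{not} agree with the standard structure near the boundary; to glue, one must first prove that the two contact structures are contactomorphic on an open shell. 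Gray stability is unavailable because shells are open, so the paper invokes Giroux's theorem via characteristic foliations on a sphere of radius $R>1$, and even there the foliations induced by $\xi_{st}$ and $\xi_b$ are only \emph{homeomorphic}, forcing a $C^0$-perturbation of the embedded sphere (Lemma \ref{lemma:shells}) before Giroux applies. Only after that identification can one interpolate the \emph{forms}: on the shell $\alpha_i=f_i\hat\alpha$ with $f_i>0$, so the cut-off combination $(1-g_i+f_ig_i)\hat\alpha$ stays contact. Your proposal skips all of this, so the global $b$-contact form is not actually constructed.

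Two further points need attention. First, Proposition \ref{prop:breakspo} requires \emph{finitely many} escape orbits; in your construction this must be verified for the local model (in the paper it follows from the first integral $x^2+y^2$ of the explicit Reeb field: one singular periodic orbit inside the bubble and two escape orbits outside, nothing else), and you never establish that your ansatz has exactly one singular periodic orbit rather than, say, a family of escape orbits or extra connections. Second, the infinitely many generalized escape orbits cannot be deduced from the semi-local results of \cite{MOP22,FMOP23}: those produce escape orbits limiting to \emph{zeros} of the exceptional Hamiltonian, and for $Z\cong\mathbb S^2$ (first Betti number zero) they only give finitely many generically. In the paper this property is read off directly from the explicit model: the helical orbits converging to the periodic parallels of the critical sphere. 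So while your architecture is right, the construction of the local model with controlled dynamics and, above all, the contactomorphic identification of shells that makes the transplantation legitimate are missing, and these are the substantive content of Steps 1–2.
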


\subsection{Step~1: Constructing Singular Bubbles}

First, we construct the basic building block for the proof of Theorem \ref{thm:nspo}, which provides an explicit $b$-contact form on an open $3$-ball with exactly one singular periodic orbit. The correspondence between Reeb $b$-vector fields and Beltrami $b$-vector fields in hydrodynamics (see~\cite{CMP19}) suggests calling these $b$-balls \textit{singular bubbles} since one can visualize our construction as placing fixed bubbles in water so that the flow is tangent to them. The aforementioned $b$-contact form is given by the following lemma. In the statement, we use the radial coordinate  $r^2=x^2+y^2+z^2$.

\begin{lemma}\label{lemma:singularbubbleform}
    Consider the $b$-manifold $(\mathbb{R}^3,\mathbb{S}^2 = $\{r=1\}$)$  endowed with the $b$-form
    \begin{equation*}
        \alpha = z(3 + r^2)\frac{rdr}{r^2 - 1} - \frac{r^2 + 1}{2}dz + x dy - y dx\,.
    \end{equation*}
   Then $\alpha$ is $b$-contact and its associated Reeb $b$-vector field has exactly one singular periodic orbit. The associated $b$-contact structure is denoted by $\xi_{b}$.
\end{lemma}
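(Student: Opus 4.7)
The plan has three main components: verifying that $\alpha$ is a $b$-contact form, exhibiting one singular periodic orbit via the rotational symmetry about the $z$-axis, and ruling out any others by a linearization argument at the two poles.

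First, I would rewrite $\frac{r\,dr}{r^2-1} = \tfrac{1}{2}\,d\log|r^2-1|$, which makes $u := r^2-1$ a defining function for $\mathbb{S}^2$ and presents $\alpha$ in the canonical decomposition $\alpha = f\,\frac{du}{u} + \beta$ with $f = \frac{z(3+r^2)}{2}$ and $\beta = -\frac{r^2+1}{2}\,dz + x\,dy - y\,dx$. In cylindrical coordinates $(\rho,\theta,z)$, where $\rho^2 = x^2+y^2$ and $x\,dy - y\,dx = \rho^2\,d\theta$, the form $\alpha$ becomes tractable enough to compute $d\alpha$ and $\alpha\wedge d\alpha$ directly, and I would verify that the resulting top-degree $b$-form is nowhere vanishing, which establishes the $b$-contact condition. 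Proposition \ref{prop:reebatz} applied to this decomposition then identifies $-f|_Z = -2z|_{\mathbb{S}^2}$ as the exceptional Hamiltonian, whose only critical points on $\mathbb{S}^2$ are the north pole $N = (0,0,1)$ and the south pole $S = (0,0,-1)$. These are necessarily the only possible endpoints of a singular periodic orbit.

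To produce a singular periodic orbit, I would exploit the manifest $SO(2)$-invariance of each term of $\alpha$ under rotations about the $z$-axis. The Reeb $b$-vector field $R_\alpha$ is therefore also $SO(2)$-invariant, so the fixed locus of the action, namely the $z$-axis, is $R_\alpha$-invariant. Restricted to the segment $\{(0,0,z) : -1 < z < 1\}$, the $\rho^2\,d\theta$ term drops out and the Reeb equations reduce to a one-dimensional ODE $\dot z = g(z)$ on the interval, whose explicit form follows directly from the coefficients of $\alpha$. Since the only zeros of $g$ on $[-1,1]$ occur at the endpoints, this interior segment is a single integral curve whose $\alpha$- and $\omega$-limits are $S$ and $N$, giving one singular periodic orbit.

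For uniqueness, I would linearize $R_\alpha$ at $N$ (and symmetrically at $S$). The restriction of $R_\alpha$ to $Z$ is the Hamiltonian vector field of $-2z|_{\mathbb{S}^2}$ at an elliptic extremum, contributing a pair $\pm i\omega$ of purely imaginary eigenvalues; the third eigenvalue of $DR_\alpha(N)$ is real, and by $SO(2)$-equivariance its eigendirection must lie along the $z$-axis. The stable manifold theorem then yields a one-dimensional local stable manifold at $N$, which by $SO(2)$-invariance coincides locally with the $z$-axis; the same argument at $S$ produces a one-dimensional unstable manifold along the axis. Any singular periodic orbit must lie in the intersection of these two manifolds, which is precisely the interior axial segment already identified. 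The main obstacle I anticipate is checking that the sign of the transverse eigenvalue at each pole makes the interior segment—and not one of the two unbounded exterior pieces of the $z$-axis—the one lying in the relevant stable/unstable manifolds; this amounts to a direct linearization computation from the explicit formula for $\alpha$.
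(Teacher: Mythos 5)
Your verification of the $b$-contact condition and your existence argument (the $SO(2)$-invariant $z$-axis, reduced to a one-dimensional ODE on the interior segment with zeros only at the poles) are sound and essentially parallel to the paper, which simply computes $R_\alpha = \mu\bigl(-y\partial_x + x\partial_y + \tfrac{r^2-1}{r^2+1}\partial_z\bigr)$ explicitly. The identification of the exceptional Hamiltonian $-2z|_{\mathbb{S}^2}$, whose only critical points are the poles, is also correct and correctly narrows the possible endpoints of a singular periodic orbit.

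The gap is in the uniqueness step. At each pole the linearization of $R_\alpha$ has eigenvalues $\pm i\omega$ together with one nonzero real eigenvalue, so the poles are \emph{not} hyperbolic: besides the one-dimensional strong stable (resp.\ unstable) manifold there is a two-dimensional center manifold (locally $Z$ itself). The stable manifold theorem therefore does not give your key claim that ``any singular periodic orbit must lie in the intersection of these two manifolds'': in the presence of center directions, the set of orbits converging to a fixed point can be strictly larger than the strong stable/unstable manifold (orbits may converge tangentially to the center directions, as in $\dot x=-x^3$), and for the backward limit the a priori constraint is only membership in the center-unstable manifold, which here is all of a neighborhood. Knowing that the dynamics \emph{inside} $Z$ near a pole is a rotation does not exclude orbits off $Z$ and off the axis spiralling into the pole; ruling that out needs an extra ingredient. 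The paper supplies exactly this ingredient with one observation that your plan misses: $x^2+y^2$ is a first integral of $R_\alpha$. Since $x^2+y^2$ vanishes at the poles and is constant along orbits, any orbit whose $\alpha$- or $\omega$-limit is a pole must lie on the $z$-axis, and the axis minus the poles consists of the interior segment (the unique singular periodic orbit) and two exterior escape orbits. If you add this conservation-law argument (or an equivalent replacement), your proof closes; as written, the linearization step does not suffice. The sign bookkeeping you flag at the end is a minor issue by comparison (at the north pole the real eigenvalue is positive, so it is the unstable direction there and the stable one at the south pole), and it is resolved automatically once the axis is identified as the only candidate locus.
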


\begin{proof}
A straightforward computation yields
    \begin{equation*}
        d\alpha = 2(r^2 + 1)dz\wedge \frac{rdr}{r^2 - 1} + 2dx\wedge dy,
    \end{equation*}
    and
   \begin{equation*}
        \alpha\wedge d\alpha = 2\left( 4z^2 + (1+r^2)^2 \right)\frac{dx\wedge dy\wedge dz}{(r^2 - 1)},
    \end{equation*}
    which is, indeed, a $b$-volume form on $(\mathbb{R}^3,\mathbb{S}^2)$. This shows that $\alpha$ is $b$-contact.
    
Next, the associated Reeb $b$-vector field can be checked to be
    \begin{equation*}
        R_\alpha = \mu\left(-y\partial_x + x\partial_y + \frac{r^2 - 1}{r^2 + 1}\partial_z\right),
    \end{equation*}
    where $\mu := \frac{2(r^2 + 1)}{4z^2 + (r^2 + 1)^2}$. 
     Noticing that the function $x^2+y^2$ is a first integral of $R_\alpha$, it readily follows that this vector field, depicted in Figure \ref{fig:bReebpict} below, has only one singular periodic orbit, which is contained in the ball $\{r<1\}$ and goes from the north pole to the south pole. In the exterior region, there are two escape orbits associated to the north and south poles, respectively, but they are not singular periodic because they do not connect two singular points on the critical hypersurface. This completes the proof of the lemma.
    \end{proof}

\begin{figure}[h]
        \centering
        \tikzset{every picture/.style={line width=0.75pt}} 

\begin{tikzpicture}[x=0.75pt,y=0.75pt,yscale=-1.3,xscale=1.3]

\draw   (302.63,126.58) .. controls (302.63,102.7) and (324.2,83.34) .. (350.8,83.34) .. controls (377.4,83.34) and (398.97,102.7) .. (398.97,126.58) .. controls (398.97,150.45) and (377.4,169.81) .. (350.8,169.81) .. controls (324.2,169.81) and (302.63,150.45) .. (302.63,126.58) -- cycle ;
\draw    (230.33,144.17) -- (415.06,144.06) ;
\draw    (282.24,110.27) -- (306,110.17) ;
\draw    (396,111.17) -- (466.97,110.17) ;
\draw  [dash pattern={on 4.5pt off 4.5pt}] (302.63,126.58) .. controls (302.63,123.3) and (324.2,120.65) .. (350.8,120.65) .. controls (377.4,120.65) and (398.97,123.3) .. (398.97,126.58) .. controls (398.97,129.85) and (377.4,132.5) .. (350.8,132.5) .. controls (324.2,132.5) and (302.63,129.85) .. (302.63,126.58) -- cycle ;
\draw    (415.06,144.06) -- (466.97,110.17) ;
\draw    (230.33,144.17) -- (282.24,110.27) ;
\draw  [fill={rgb, 255:red, 0; green, 0; blue, 0 }  ,fill opacity=1 ] (348.48,83.34) .. controls (348.48,82.77) and (349,82.3) .. (349.64,82.3) .. controls (350.28,82.3) and (350.8,82.77) .. (350.8,83.34) .. controls (350.8,83.91) and (350.28,84.38) .. (349.64,84.38) .. controls (349,84.38) and (348.48,83.91) .. (348.48,83.34) -- cycle ;
\draw  [fill={rgb, 255:red, 0; green, 0; blue, 0 }  ,fill opacity=1 ] (349.81,169.96) .. controls (349.81,169.39) and (350.33,168.92) .. (350.97,168.92) .. controls (351.61,168.92) and (352.13,169.39) .. (352.13,169.96) .. controls (352.13,170.53) and (351.61,171) .. (350.97,171) .. controls (350.33,171) and (349.81,170.53) .. (349.81,169.96) -- cycle ;
\draw   (315,98.4) .. controls (315,96.13) and (331.04,94.29) .. (350.83,94.29) .. controls (370.62,94.29) and (386.67,96.13) .. (386.67,98.4) .. controls (386.67,100.66) and (370.62,102.5) .. (350.83,102.5) .. controls (331.04,102.5) and (315,100.66) .. (315,98.4) -- cycle ;
\draw   (365.27,100.9) -- (372.28,101.73) -- (365.78,103.5) ;
\draw   (340.41,93.03) -- (333.98,94.88) -- (341.02,95.63) ;

\draw    (351,180.9) -- (351,174.83) ;
\draw [shift={(351,172.83)}, rotate = 90] [color={rgb, 255:red, 0; green, 0; blue, 0 }  ][line width=0.75]    (4.37,-1.32) .. controls (2.78,-0.56) and (1.32,-0.12) .. (0,0) .. controls (1.32,0.12) and (2.78,0.56) .. (4.37,1.32)   ;
\draw    (349.73,79.33) -- (349.09,32.67) ;
\draw [shift={(349.07,30.67)}, rotate = 89.22] [color={rgb, 255:red, 0; green, 0; blue, 0 }  ][line width=0.75]    (4.37,-1.32) .. controls (2.78,-0.56) and (1.32,-0.12) .. (0,0) .. controls (1.32,0.12) and (2.78,0.56) .. (4.37,1.32)   ;
\draw   (313.67,153.37) .. controls (313.67,151.45) and (330.31,149.9) .. (350.83,149.9) .. controls (371.36,149.9) and (388,151.45) .. (388,153.37) .. controls (388,155.28) and (371.36,156.84) .. (350.83,156.84) .. controls (330.31,156.84) and (313.67,155.28) .. (313.67,153.37) -- cycle ;
\draw   (365.81,155.49) -- (373.08,156.19) -- (366.34,157.69) ;
\draw   (340.02,148.83) -- (333.35,150.4) -- (340.65,151.03) ;

\draw    (318.2,225) .. controls (310.87,217.67) and (333.43,207.38) .. (354.57,203.05) .. controls (375.72,198.72) and (386.13,195.13) .. (379.8,189.8) .. controls (373.47,184.47) and (304.6,176.6) .. (320.2,175.4) .. controls (335.49,174.22) and (409.17,170.74) .. (384.25,164.2) ;
\draw [shift={(382.6,163.8)}, rotate = 12.94] [color={rgb, 255:red, 0; green, 0; blue, 0 }  ][line width=0.75]    (4.37,-1.96) .. controls (2.78,-0.92) and (1.32,-0.27) .. (0,0) .. controls (1.32,0.27) and (2.78,0.92) .. (4.37,1.96)   ;
\draw [shift={(334.3,208.96)}, rotate = 158.41] [color={rgb, 255:red, 0; green, 0; blue, 0 }  ][line width=0.75]    (4.37,-1.96) .. controls (2.78,-0.92) and (1.32,-0.27) .. (0,0) .. controls (1.32,0.27) and (2.78,0.92) .. (4.37,1.96)   ;
\draw [shift={(373.15,198.49)}, rotate = 162.13] [color={rgb, 255:red, 0; green, 0; blue, 0 }  ][line width=0.75]    (4.37,-1.96) .. controls (2.78,-0.92) and (1.32,-0.27) .. (0,0) .. controls (1.32,0.27) and (2.78,0.92) .. (4.37,1.96)   ;
\draw [shift={(345.18,181.64)}, rotate = 9.9] [color={rgb, 255:red, 0; green, 0; blue, 0 }  ][line width=0.75]    (4.37,-1.96) .. controls (2.78,-0.92) and (1.32,-0.27) .. (0,0) .. controls (1.32,0.27) and (2.78,0.92) .. (4.37,1.96)   ;
\draw [shift={(360.92,172.44)}, rotate = 174.78] [color={rgb, 255:red, 0; green, 0; blue, 0 }  ][line width=0.75]    (4.37,-1.96) .. controls (2.78,-0.92) and (1.32,-0.27) .. (0,0) .. controls (1.32,0.27) and (2.78,0.92) .. (4.37,1.96)   ;
\draw    (351.13,184.73) -- (351.22,219.22) ;
\draw [color={rgb, 255:red, 255; green, 0; blue, 0 }  ,draw opacity=1 ]   (349.71,86.56) -- (350.16,101) ;
\draw [color={rgb, 255:red, 255; green, 0; blue, 0 }  ,draw opacity=1 ]   (350.16,104.78) -- (350.6,154.78) ;
\draw [color={rgb, 255:red, 255; green, 0; blue, 0 }  ,draw opacity=1 ]   (351,158.56) -- (351,165.35) ;
\draw [shift={(351,167.35)}, rotate = 270] [color={rgb, 255:red, 255; green, 0; blue, 0 }  ,draw opacity=1 ][line width=0.75]    (4.37,-1.32) .. controls (2.78,-0.56) and (1.32,-0.12) .. (0,0) .. controls (1.32,0.12) and (2.78,0.56) .. (4.37,1.32)   ;

\draw (373.2,70.8) node [anchor=north west][inner sep=0.75pt]  [font=\scriptsize] [align=left] {$\displaystyle Z\ =\ \mathbb{S}^{2}$};

\end{tikzpicture}
        \caption{Sketch of the Reeb $b$-vector field in Lemma~\ref{lemma:singularbubbleform}. The singular periodic orbit is colored red. A helical orbit converging to a periodic orbit on the critical set is also shown (which is a \textit{generalized singular periodic orbit}), and two escape orbits in the complement of the ball bounded by the critical surface.}
        \label{fig:bReebpict}
    \end{figure}
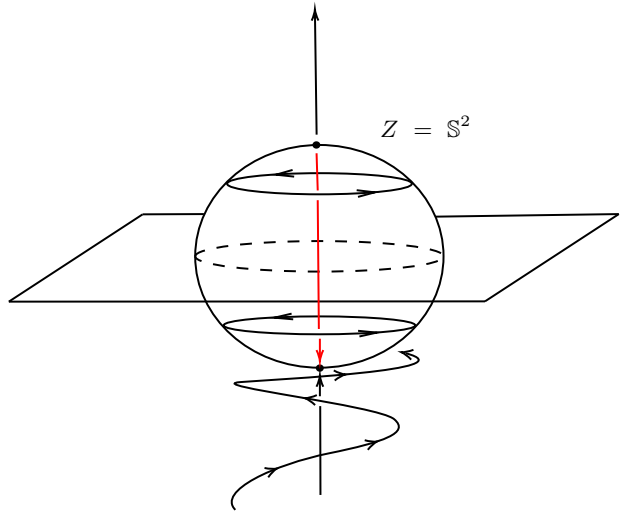

\begin{remark}
The $1$-form $\alpha$ in Lemma \ref{lemma:singularbubbleform} was found by pulling back the $b$-contact form constructed in \cite[Example 6.7]{MO21} from $\mathbb{S}^3$ onto $\mathbb R^3$  using a stereographic projection, and then multiplying it by a global factor. A similar construction was used in~\cite{MO21} to show the existence of \textit{traps} in $b$-Reeb dynamics.
\end{remark}

\begin{remark}\label{rem:generalizedescape}
An additional interesting feature of the Reeb $b$-vector field $R_\alpha$ in Lemma~\ref{lemma:singularbubbleform} is that it exhibits an infinite number of \textit{generalized escape orbits}. These are all the helical orbits converging to parallels on the critical sphere. Generalized periodic orbits appear in celestial mechanics,  for instance in the work of Sitnikov~\cite{sitnikov}, see also Section 5 in \cite{MOP22}.
\end{remark}

In what follows let us denote by $\xi_b$ the $b$-contact structure defined by the $b$-contact form of Lemma \ref{lemma:singularbubbleform}. We claim that we can cut out a ball from $\mathbb{R}^3$ with the standard contact structure $\xi_{st}$ and glue the singular bubble with the $b$-contact structure $\xi_b$ back in along an \textit{open shell} (i.e., a domain in $\mathbb R^3$ diffeomorphic to a spherical annulus $\mathbb S^2\times (-\varepsilon,\varepsilon)$). The process is illustrated in Figure \ref{fig:extensionpict} below. This allows us to insert these singular bubbles into any contact manifold by cutting and pasting them into contact Darboux neighborhoods of said manifold.

 \begin{figure}[h]
        \centering
        \tikzset{every picture/.style={line width=0.75pt}} 

\begin{tikzpicture}[x=0.75pt,y=0.75pt,yscale=-1,xscale=1]

\draw   (67.33,70) -- (289.67,70) -- (289.67,239.67) -- (67.33,239.67) -- cycle ;
\draw   (392.67,70.67) -- (625,70.67) -- (625,239.67) -- (392.67,239.67) -- cycle ;
\draw    (243.99,151.05) -- (445.49,150.06) ;
\draw [shift={(447.49,150.05)}, rotate = 179.72] [color={rgb, 255:red, 0; green, 0; blue, 0 }  ][line width=0.75]    (10.93,-3.29) .. controls (6.95,-1.4) and (3.31,-0.3) .. (0,0) .. controls (3.31,0.3) and (6.95,1.4) .. (10.93,3.29)   ;
\draw  [color={rgb, 255:red, 255; green, 0; blue, 0 }  ,draw opacity=1 ] (143.67,154.83) .. controls (143.67,135.6) and (159.26,120) .. (178.5,120) .. controls (197.74,120) and (213.33,135.6) .. (213.33,154.83) .. controls (213.33,174.07) and (197.74,189.67) .. (178.5,189.67) .. controls (159.26,189.67) and (143.67,174.07) .. (143.67,154.83) -- cycle ;
\draw  [dash pattern={on 4.5pt off 4.5pt}] (122.43,154.83) .. controls (122.43,124.31) and (147.53,99.57) .. (178.5,99.57) .. controls (209.47,99.57) and (234.57,124.31) .. (234.57,154.83) .. controls (234.57,185.36) and (209.47,210.1) .. (178.5,210.1) .. controls (147.53,210.1) and (122.43,185.36) .. (122.43,154.83)(106.99,154.83) .. controls (106.99,115.78) and (139.01,84.13) .. (178.5,84.13) .. controls (217.99,84.13) and (250.01,115.78) .. (250.01,154.83) .. controls (250.01,193.88) and (217.99,225.54) .. (178.5,225.54) .. controls (139.01,225.54) and (106.99,193.88) .. (106.99,154.83) ;
\draw  [color={rgb, 255:red, 255; green, 0; blue, 0 }  ,draw opacity=1 ] (481,154.83) .. controls (481,135.6) and (496.6,120) .. (515.83,120) .. controls (535.07,120) and (550.67,135.6) .. (550.67,154.83) .. controls (550.67,174.07) and (535.07,189.67) .. (515.83,189.67) .. controls (496.6,189.67) and (481,174.07) .. (481,154.83) -- cycle ;
\draw  [dash pattern={on 4.5pt off 4.5pt}] (459.76,154.83) .. controls (459.76,124.31) and (484.87,99.57) .. (515.83,99.57) .. controls (546.8,99.57) and (571.9,124.31) .. (571.9,154.83) .. controls (571.9,185.36) and (546.8,210.1) .. (515.83,210.1) .. controls (484.87,210.1) and (459.76,185.36) .. (459.76,154.83) -- cycle ;
\draw  [dash pattern={on 4.5pt off 4.5pt}]  (451.11,101) .. controls (468.49,80.55) and (529.07,88.48) .. (546.99,95.05) .. controls (564.92,101.62) and (582.66,116.97) .. (597.99,136.16) .. controls (613.32,155.35) and (604.26,188.2) .. (586.49,203.55) .. controls (568.73,218.9) and (540.49,216.05) .. (513.99,225.05) .. controls (487.49,234.05) and (445.82,200.06) .. (441.49,171.05) .. controls (437.17,142.05) and (435.49,117.05) .. (451.11,101) -- cycle ;

\draw (82.67,80.33) node [anchor=north west][inner sep=0.75pt]  [font=\footnotesize] [align=left] {$\displaystyle \xi _{b}$};
\draw (406.67,79.67) node [anchor=north west][inner sep=0.75pt]  [font=\footnotesize] [align=left] {$\displaystyle \xi _{std}$};
\draw (464,143) node [anchor=north west][inner sep=0.75pt]  [font=\footnotesize] [align=left] {$\displaystyle \xi _{b}$};

\end{tikzpicture}
        \caption{Gluing the singular bubble with the standard contact structure of $\mathbb{R}^3$ along an open shell (the area within the dashed annulus). The critical surface is colored red and is not part of the shell.}
        \label{fig:extensionpict}
    \end{figure}
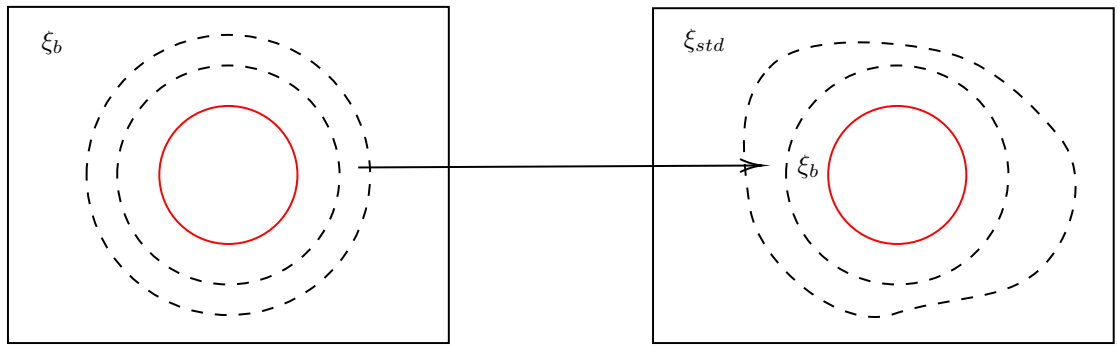

We observe that, although the $b$-contact structure $\xi_b$ is a contact structure in the usual (i.e., not $b$-) sense on an open shell away from the ball $\{r \leq 1\}$, we cannot easily resort to the Gray stability theorem to conclude that it is contactomorphic to an open shell with $\xi_{st}$, because shells are open manifolds. We therefore use the following theorem by Giroux \cite[Section II.1.2]{G91} (see also \cite[Theorem~2.5.22]{G08}). 

\begin{theorem}[Giroux]\label{thm:giroux}
    Let $S_i$ be closed surfaces in contact $3$–manifolds $(M_i, \xi_i)$, $i = 0, 1$, and $\phi: S_0 \rightarrow S_1$ a diffeomorphism preserving the characteristic foliations. Then there is a contactomorphism $\psi: (\mathcal{N}(S_0),S_0) \rightarrow (\mathcal{N}(S_1),S_1)$ of suitable neighbourhoods $\mathcal{N}(S_i)$ of $S_i$.
\end{theorem}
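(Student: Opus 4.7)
The plan is to prove Giroux's theorem by a relative Moser/Gray-stability argument, following the classical strategy (cf.\ \cite[Theorem 2.5.22]{G08}).

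\emph{Reduction and normalisation.} First, use $\phi$ to identify $S_0$ with $S_1$, and denote both by $S$. Choosing vector fields transverse to $S$ in each $M_i$ and the associated tubular neighborhoods identifies neighborhoods of $S$ in $M_0$ and $M_1$ with $S\times(-\varepsilon,\varepsilon)$. The problem reduces to the following local version: given two contact forms $\alpha_0,\alpha_1$ on $S\times(-\varepsilon,\varepsilon)$ whose characteristic foliations on $S\times\{0\}$ coincide as oriented singular foliations, produce a diffeomorphism of a (smaller) neighborhood of $S$, fixing $S$ pointwise, which pulls $\ker\alpha_1$ back to $\ker\alpha_0$. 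Since the characteristic foliations agree as smooth oriented singular foliations, the kernel line fields of $\alpha_0|_{TS}$ and $\alpha_1|_{TS}$ coincide, so $\alpha_0|_{TS}=k\,\alpha_1|_{TS}$ for a smooth positive function $k$ on $S$ (smoothness across singular points follows from the agreement of the defining vector fields up to a smooth positive factor). Replacing $\alpha_1$ by $h\alpha_1$ for a positive extension $h$ of $k$ gives $\alpha_0|_{TS}=\alpha_1|_{TS}$. A further adjustment---using the freedom in choosing the transverse vector field in $M_1$ together with an additional rescaling equal to $1$ on $S$---arranges $\alpha_0=\alpha_1$ as $1$-forms on the entire bundle $TM|_S$. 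After these normalisations, the $1$-form $\beta:=\alpha_1-\alpha_0$ vanishes on $TM|_S$.

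\emph{Contact interpolation.} The linear interpolation $\alpha_t:=(1-t)\alpha_0+t\alpha_1$ is contact in a neighborhood of $S$. Indeed, at any $p\in S$ we have $\alpha_t|_p=\alpha_0|_p=\alpha_1|_p$, so
\begin{equation*}
    \alpha_t\wedge d\alpha_t|_p=(1-t)(\alpha_0\wedge d\alpha_0)|_p+t(\alpha_1\wedge d\alpha_1)|_p>0,
\end{equation*}
a convex combination of positive contact volume forms; this positivity persists in a tubular neighborhood of $S$ by continuity.

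\emph{Relative Moser.} I seek a time-dependent vector field $X_t$ with flow $\psi_t$, fixing $S$ pointwise, satisfying $\psi_t^*\alpha_t=\lambda_t\alpha_0$ for some positive functions $\lambda_t$. Differentiating and applying Cartan's magic formula yields $d(\iota_{X_t}\alpha_t)+\iota_{X_t}d\alpha_t+\beta=\mu_t\alpha_t$ for some $\mu_t$. Imposing the gauge $\iota_{X_t}\alpha_t\equiv 0$ (so $X_t\in\xi_t$) and contracting with the Reeb field $R_{\alpha_t}$ gives $\mu_t=\beta(R_{\alpha_t})$; the residual equation
\begin{equation*}
    \iota_{X_t}d\alpha_t=\mu_t\alpha_t-\beta
\end{equation*}
has a unique solution $X_t\in\xi_t$ by the non-degeneracy of $d\alpha_t|_{\xi_t}$. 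Since $\beta|_{TM|_S}\equiv 0$, both $\mu_t$ and the right-hand side vanish along $S$, and hence $X_t|_S\equiv 0$. Integrating $X_t$ on a possibly smaller tubular neighborhood (where the flow is defined for all $t\in[0,1]$ precisely because $X_t$ vanishes on $S$) yields the desired contactomorphism $\psi_1$ fixing $S$ pointwise with $\psi_1^*\alpha_1=\lambda_1\alpha_0$.

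\emph{Main obstacle.} The technical heart of the proof is the normalisation step: arranging $\alpha_0=\alpha_1$ on all of $TM|_S$, not merely on $TS$, while remaining smooth across the singular points of the characteristic foliation where both tangential parts vanish. Once this is achieved, everything else---the convex-combination identity showing that $\alpha_t$ is contact near $S$, the solvability of the Moser equation, and especially the vanishing $X_t|_S\equiv 0$ that makes $\psi_1$ fix $S$ pointwise---follows transparently from $\beta|_{TM|_S}\equiv 0$.
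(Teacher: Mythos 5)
First, a point of reference: the paper does not prove this statement at all — it is quoted as a classical theorem of Giroux, with pointers to \cite{G91} and \cite[Theorem~2.5.22]{G08} — so your proposal can only be measured against the standard textbook argument. Your overall scheme (reduce via tubular neighborhoods to two contact forms near a single surface $S$ inducing the same characteristic foliation, normalize along $S$, check that the linear interpolation is contact near $S$, then run a relative Moser/Gray argument) is exactly that classical route, and your Moser step is correct \emph{granted} your normalization. The genuine gap is the normalization itself, which you flag as ``the technical heart'' but do not actually establish: arranging $\alpha_0=\alpha_1$ on all of $TM|_S$. The tools you invoke do not deliver it as stated. A rescaling by a positive function equal to $1$ on $S$ leaves $\alpha_1|_{TM|_S}$ completely unchanged, so it contributes nothing to matching the normal components; the only real freedom is the choice of transverse vector field, i.e.\ replacing $N$ by $aN+V$ with $a>0$ and $V$ tangent to $S$, which amounts to solving $a\,u_1+\beta(V)=u_0$ on $S$, where $u_i=\alpha_i(N)$ and $\beta=\alpha_0|_{TS}=\alpha_1|_{TS}$. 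Away from the singular points of the characteristic foliation this is solvable locally and patches by convexity of the solution set, but at a singular point $\beta=0$ and solvability forces $u_0$ and $u_1$ to have the same sign there — a fact that is true but must be extracted from the hypothesis that $\phi$ preserves the \emph{oriented} characteristic foliations together with the contact condition at singular points (e.g.\ via $u\,\mathrm{div}\,X-du(X)\neq 0$); nothing in your write-up addresses this. Similarly, the smoothness of the positive factor $k$ across the singular points of the foliation is asserted (``follows from the agreement of the defining vector fields up to a smooth positive factor'') rather than proved, which is essentially assuming the needed statement.

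It is worth noting that the proof the paper points to avoids your problematic step altogether. In \cite[Theorem~2.5.22]{G08} one only arranges $\alpha_0|_{TS}=\alpha_1|_{TS}$; since $\alpha_1-\alpha_0$ then vanishes on $TS$ at every point of $S$, one gets $d\alpha_0=d\alpha_1$ on $\Lambda^2 TS$ along $S$, and a short computation shows that the same convex-combination identity $\alpha_t\wedge d\alpha_t|_p=(1-t)\,\alpha_0\wedge d\alpha_0|_p+t\,\alpha_1\wedge d\alpha_1|_p$ still holds at points $p\in S$, so the interpolation is contact near $S$. In the Moser step the vector field $X_t$ then does not vanish on $S$, but it is tangent to $S$: for $Y$ spanning the characteristic line $\ell_p=\xi_t(p)\cap T_pS$ one has $d\alpha_t(X_t,Y)=-(\alpha_1-\alpha_0)(Y)=0$, so $X_t(p)$ lies in the $d\alpha_t$-symplectic orthogonal of $\ell_p$ inside $\xi_t(p)$, which is $\ell_p$ itself (and at singular points $\xi_t(p)=T_pS$). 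Hence the flow preserves $S$ without fixing it pointwise, which is all the statement requires. So your approach is fixable, but as written the key step is a gap, whereas the standard argument sidesteps it with a strictly weaker normalization.
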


Recall that the \textit{characteristic foliation} $S_\xi$ of a surface $S$ in a $3$-dimensional contact manifold $(M, \xi)$ is the singular 1–dimensional foliation of $S$ defined by the distribution $TS \cap \xi|_S$.

In the context of our construction, we apply Giroux's theorem choosing the closed surfaces to be spheres of radius $R>1$, and their neighborhoods are the open shells we want to identify (in fact, due to a minor technicality, the chosen surfaces will be \textit{$C^0$-close} to said spheres, not the spheres exactly). The gluing construction is stated in the following lemma.

\begin{lemma}\label{lemma:shells}
    For $R>1$ there are open shells near $\mathbb{S}^2_R$ (the 2-sphere of radius $R$) in $(\mathbb{R}^3,\xi_{st})$ and $(\mathbb{R}^3,\xi_{b})$ respectively, which are contactomorphic.
\end{lemma}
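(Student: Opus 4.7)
\emph{Plan.} The strategy is to invoke Giroux's Theorem \ref{thm:giroux}: I want to produce closed surfaces $S_0\subset(\mathbb{R}^3,\xi_{st})$ and $S_1\subset(\mathbb{R}^3,\xi_b)$, each near $\mathbb{S}^2_R$, together with a diffeomorphism $\phi:S_0\to S_1$ preserving their characteristic foliations. The contactomorphism of shell-shaped neighborhoods then follows directly from Giroux's theorem.

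First, I would compute both characteristic foliations on the round sphere $\mathbb{S}^2_R$. Using spherical coordinates $(\theta,\phi)$ and the fact that $dr$ annihilates $T\mathbb{S}^2_R$ (so the polar term of $\alpha_b$ drops out),
\[
\alpha_{st}|_{T\mathbb{S}^2_R} = -R\sin\theta\,d\theta + R^2\sin^2\theta\,d\phi,\qquad
\alpha_b|_{T\mathbb{S}^2_R} = \tfrac{R(R^2+1)}{2}\sin\theta\,d\theta + R^2\sin^2\theta\,d\phi.
\]
In both cases the characteristic foliation is $SO(2)$-invariant, has singularities exactly at the two poles (where the contact plane coincides with the tangent plane to the sphere), and consists of leaves $\phi = c\log|\tan(\theta/2)|+\mathrm{const}$ spiraling between them. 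The rotation rates are $c_{st}=1/R$ and $c_b=-(R^2+1)/(2R)$, which have opposite signs and, for $R>1$, distinct magnitudes.

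The two foliations are therefore topologically equivalent but a priori not smoothly diffeomorphic, since focal singularities carry a smooth rotation-number invariant and the naive twist map $(\theta,\phi)\mapsto(\theta,\phi+F(\theta))$ matching them would have $F(\theta)\propto\log|\tan(\theta/2)|$, which is singular at the poles. To resolve this, I would perturb $\mathbb{S}^2_R$ inside $(\mathbb{R}^3,\xi_b)$ to a $C^0$-close, rotationally symmetric graphical sphere $S_1=\{r=R+\varepsilon f(\theta)\}$, with $f$ supported in small caps around each pole; this explains the ``$C^0$-close" technicality noted by the authors. A direct linearization of the characteristic direction field at each polar singularity in Cartesian coordinates $(u,v)=(x,y)$ shows that the focal rotation rate on $S_1$ depends continuously and surjectively on the second-order Taylor coefficient of $f$ at that pole. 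Choosing $f$ appropriately, one arranges the rotations at the two poles of $S_1$ to match those of $\mathbb{S}^2_R$ in $\xi_{st}$ in magnitude (keeping them of opposite signs). Post-composing with the orientation-reversing involution $(\theta,\phi)\mapsto(\theta,-\phi)$ of $\mathbb{S}^2$ then produces a smooth diffeomorphism $S_0\to S_1$ intertwining the characteristic foliations, and Giroux's theorem delivers the contactomorphism of shells.

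The main obstacle is the perturbation-theoretic claim that the focal rotation rate on $S_1$ is an independently controllable parameter under small radial deformations. This reduces to an explicit $2\times 2$ linearization of the characteristic vector field at a polar singularity, followed by an implicit function theorem argument to hit the prescribed rotation magnitudes; the rest of the proof is a clean application of Giroux's foliation stability result.
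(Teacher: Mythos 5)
Your proposal follows the same skeleton as the paper's proof — compute both characteristic foliations on $\mathbb{S}^2_R$, observe that they are homeomorphic but not diffeomorphic because the spiraling rates at the two elliptic poles disagree, repair this by a $C^0$-small perturbation of one of the spheres, and finish with Giroux's Theorem \ref{thm:giroux} — but you handle the repair step differently. The paper constructs the (infinitely twisting, orientation-preserving) homeomorphism matching the oriented foliations and then invokes Geiges' Proposition 4.8.14 as a black box to replace one embedding by a $C^0$-close one with diffeomorphic foliation; you instead build the perturbation explicitly, as a rotationally symmetric graph $r=R+\varepsilon f(\theta)$ whose $2$-jet at the poles is tuned so that the focal spiraling rate hits the target value. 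This is a legitimate alternative and in fact more constructive: a direct computation (restricting $\alpha_b$ to a graph $z=R-\tfrac{\kappa}{2}\rho^2+\dots$ near the pole) shows the rate depends affinely on $\kappa$, so the intermediate value theorem suffices, and since $\alpha(\partial_\phi)=\rho^2>0$ off the axis, no new singularities or closed leaves appear anywhere on a rotationally symmetric perturbed sphere. Two caveats, both fillable: (i) matching the numerical rate at the poles does not by itself give the smooth equivalence — you still must argue that the $SO(2)$-equivariant leaf-matching twist $(\theta,\phi)\mapsto(\theta,\phi+g(\theta))$ is smooth at the poles, which follows from writing the equivariant generating fields in the normal form $\alpha(\rho^2)(u\partial_u+v\partial_v)+\beta(\rho^2)(-v\partial_u+u\partial_v)$, so that when the rates $\beta(0)/\alpha(0)$ agree the twist angle is a smooth function of $\rho^2$; this is exactly the content your phrase ``produces a smooth diffeomorphism'' is hiding, and it is what the paper outsources to Geiges' Proposition 4.8.14; (ii) your final map includes the reflection $(\theta,\phi)\mapsto(\theta,-\phi)$ and is therefore orientation-reversing on the sphere, so the contactomorphism obtained from Giroux's theorem may reverse the coorientation (one must replace $\alpha_1$ by $-\alpha_1$ in the Moser step); this still proves the lemma as stated, but it is cleaner — and matches how the lemma is used later, where one needs $\alpha_i|_{S_i}=f_i\hat\alpha|_{S_i}$ with $f_i>0$ — to overshoot the curvature $\kappa$ so that the spiraling \emph{sign} flips as well, matching the signed rate $+1/R$ and dispensing with the reflection altogether.
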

\begin{proof}
    In view of Giroux's theorem, it would suffice to prove that the characteristic foliations induced by $(\mathbb{R}^3,\xi_{st})$ and $(\mathbb{R}^3,\xi_{b})$ on $\mathbb{S}^2_R$ are diffeomorphic. As it turns out, the characteristic foliations are only \textit{homeomorphic}, but as we shall see, we can perturb the spheres slightly to make the resulting characteristic foliations diffeomorphic and then apply Giroux's theorem.
    
    We denote the characteristic foliations by $S_{\xi_{st}}$ and $S_{\xi_b}$. For the standard contact structure, we take the kernel of the 1-form $\alpha_{st} = dz + xdy - ydx$. The characteristic foliation on a sphere $\mathbb{S}^2_R$ is computed in \cite[Example 2.5.19]{G08}, and it is generated by the integral curves of the vector field
    \begin{equation*}
        (xz - y)\partial_x + (yz + x)\partial_y - (x^2 + y^2)\partial_z.
    \end{equation*}
    In particular, the only singular points are the north and south poles, which are non-degenerate, and all the integral curves are transverse to the level sets $\{z = const\}$. A straightforward computation shows that the characteristic foliation induced by $(\mathbb{R}^3,\xi_b)$ on $\mathbb{S}^2_R$ is generated by the integral curves of the vector field
    \begin{equation*}
        (xz + \frac{r^2 + 1}{2}y)\partial_x + (yz - \frac{r^2 + 1}{2}x)\partial_y - (x^2 + y^2)\partial_z.
    \end{equation*}

    Indeed, the characteristic foliation is given by $S_{\xi_b}=T_p\mathbb{S}^2_R\cap (\xi_b)_p$ for each $p\in \mathbb{S}^{2}_R$, which can be translated into the following system of equations,
    \begin{equation*}
    \begin{cases}
         -\frac{r^2+1}{2}X_z+xX_y-yX_x=0\\
        xX_x+yX_y+zX_z=0\,,
    \end{cases}
    \end{equation*}
    for the unknown vector field $X_x\partial_x+X_y\partial_y+X_z\partial_z$ that generates the characteristic foliation $S_{\xi_b}$. This system can be solved to obtain the above-mentioned result.
    
    The singular points of this foliation are also the north and south poles, and are of elliptic type (in the sense that the eigenvalues of the linearization have two eigenvalues with real parts of equal sign, \cite[p. 166]{G08}); they are again non-degenerate, and its orbits are also transverse to the level sets $\{z = const\}$. Therefore, we can construct a map between the characteristic foliations by mapping the flows of the vector fields above from $\{z = 0\}$ onto each other and maintaining the north and south poles fixed. This mapping is in fact a homeomorphism, \textit{not} necessarily a diffeomorphism. However, this inconvenience is easily overcome with the following proposition, again from \cite[Proposition 4.8.14]{G08}.

    \begin{proposition}
        Let $j_i:S\rightarrow (M_i,\xi_i)$, $i=0,1$, be two embeddings of a closed oriented surface $S$ into $3$-dimensional contact manifolds such that the characteristic foliations $(j_i(S))_{\xi_i}$ are of Morse-Smale type and homeomorphic (as oriented singular foliations) via an orientation-preserving homeomorphism that respects the signs of the singular points (this is an extra condition for the saddle points only). Then there is an embedding $j_1'$, $C^0$-close to $j_1$, such that $(j_0(S))_{\xi_0}$ and $(j_1'(S))_{\xi_1}$ are diffeomorphic.
    \end{proposition}
    
    The condition that a characteristic foliation is of Morse-Smale type is a generic one that both foliations satisfy.\footnote{For a characteristic foliation to be of Morse-Smale type there must be finitely many singularities and closed orbits, all of which must be non-degenerate; the $\alpha$- and $\omega$-limit sets of all flowlines must be singular points or closed orbits; and there are no trajectories connecting saddle points. See \cite[Definition 4.6.8]{G08}.}
    The sign condition does not apply to the present case because there are no saddle points in the characteristic foliations.
    Thus, we can perturb the $\mathbb{S}^{2}_R$ in $(\mathbb{R}^{3},\xi_{st})$ to a $C^0$-close spherical surface  whose characteristic foliation $S_{\xi_{st}}'$ is diffeomorphic to $S_{\xi_b}$. An immediate application of Giroux's Theorem \ref{thm:giroux} yields the desired contactomorphism between open shells near the 2-spheres of radius $R$.
\end{proof}

The idea that we shall exploit in the second step is that this lemma allows us to replace contact Darboux $3$-balls with the singular bubbles of Lemma \ref{lemma:singularbubbleform}.

\subsection{Step 2: Transplanting singular bubbles into contact manifolds}

In this subsection, we show how we can place the singular bubbles into a given contact manifold and extend the $b$-contact form of Lemma \ref{lemma:singularbubbleform} to a globally defined $b$-contact form with a prescribed critical set. This is the content of the following proposition:

\begin{proposition}\label{prop:addsingularbubbles}
    Let $(M,\xi)$ be a coorientable contact $3$-manifold. Then for any positive integer $N$, there exists a $b$-contact form $\tilde\alpha$ on $M$ such that:
    \begin{itemize}
        \item The critical set $Z$ consists of $N$ components diffeomorphic to $\mathbb S^2$.
        \item The number of singular periodic orbits is \emph{at least} $N$.
        \item  There are finitely many escape orbits.
    \end{itemize}
    The $b$-contact structure defined by $\tilde\alpha$ coincides with $\xi$ outside a neighborhood of the balls bounded by $Z$. Furthermore, each component of $Z$ has an infinite number of generalized escape orbits converging to it.
\end{proposition}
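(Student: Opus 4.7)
The plan is to implant $N$ disjoint copies of the singular bubble from Lemma \ref{lemma:singularbubbleform} into contact Darboux charts of $(M,\xi)$, using Lemma \ref{lemma:shells} to realize the gluing along open shells.

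Fix $R>1$. By the contact Darboux theorem, I would choose $N$ pairwise disjoint open sets $U_1,\dots,U_N\subset M$, each contactomorphic to an open ball of radius $R'>R$ in $(\mathbb{R}^3,\xi_{st})$. Inside each $U_i$, Lemma \ref{lemma:shells} provides a contactomorphism $\Phi_i$ between an open shell $\mathcal{N}_i\subset U_i$ around the sphere of radius $R$ and an open shell around the same sphere in $(\mathbb{R}^3,\xi_b)$. I would then delete from $M$ the closed ball $B_i$ bounded by this sphere in each $U_i$ and glue in its place, via $\Phi_i$ on the annular overlap, the closed ball bounded by $\mathbb{S}^2_R$ in $(\mathbb{R}^3,\xi_b)$. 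The result is a smooth manifold diffeomorphic to $M$ carrying a globally defined $b$-contact structure $\tilde\xi$ whose critical set $Z$ consists of the $N$ unit spheres from the implanted bubbles.

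To upgrade $\tilde\xi$ to a globally defined $b$-contact form, I would interpolate on each shell: since $\Phi_i^*\alpha$ and the standard contact form $\alpha_{st}$ share a kernel on $\mathcal{N}_i$, we may write $\Phi_i^*\alpha=h_i\,\alpha_{st}$ for a positive function $h_i$, and the convex combination $\rho\,\Phi_i^*\alpha+(1-\rho)\alpha_{st}=(\rho h_i+1-\rho)\alpha_{st}$ remains contact for every $\rho\in[0,1]$. Picking $\rho$ equal to $1$ on a smaller inner subshell and $0$ on a smaller outer subshell produces a global $b$-contact form $\tilde\alpha$ that agrees with the $\alpha$ from Lemma \ref{lemma:singularbubbleform} inside each $B_i$ and with a contact form defining $\xi$ outside a neighborhood of $\bigcup_i U_i$.

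The three dynamical conclusions then follow from the explicit formulas in Lemma \ref{lemma:singularbubbleform} together with Remark \ref{rem:generalizedescape}. Each bubble supplies one singular periodic orbit, namely the $z$-axis segment inside $\{r<1\}$ joining the two poles, which is wholly contained in $B_i$ and undisturbed by the interpolation; this yields at least $N$ singular periodic orbits. The helical orbits inside each bubble provide infinitely many generalized escape orbits converging to each component of $Z$. For finiteness of escape orbits: the Reeb $b$-vector field has only the two poles as stationary points on each critical sphere, so any escape orbit must converge to one of the $2N$ poles; since $x^2+y^2$ is a first integral of the Reeb flow inside each bubble, only orbits lying on the rotation axis $\{x=y=0\}$ inside a bubble can accumulate on a pole, so each pole is the limit of at most one additional axis orbit (the extension outside $B_i$ of the exterior $z$-axis segment), giving a global bound of $3N$ escape orbits.

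The most delicate point, and the one that requires the most care, is the interpolation step. The transition annulus $\{0<\rho<1\}$ must be kept strictly outside the unit sphere of each bubble, so that the form there is an ordinary smooth contact form and the first-integral argument above still applies throughout $B_i$. With this precaution all three bullet points of the statement are verified.
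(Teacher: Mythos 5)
Your construction is correct and follows essentially the same route as the paper: implant the bubbles of Lemma~\ref{lemma:singularbubbleform} via the shell contactomorphisms of Lemma~\ref{lemma:shells}, interpolate on each shell using that the two forms there are positive multiples of one another, and read off the dynamics (one interior singular periodic orbit per bubble, the axis escape orbits, and the helical generalized escape orbits of Remark~\ref{rem:generalizedescape}). The only cosmetic difference is that the paper interpolates directly between the bubble forms $\alpha_i$ and a globally defined contact form $\hat\alpha$ for $\xi$, which sidesteps the extra (equally easy) matching of your Darboux form $\alpha_{st}$ to a global form near the boundary of each chart, and your first-integral count of the finitely many escape orbits just makes explicit what the paper asserts.
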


\begin{proof}
   Let $p_1,\dots,p_N$ be $N$ different points in $M$. By Darboux's Theorem, we can take disjoint neighborhoods $U_i$ of $p_i$ which are contactomorphic to $(\mathbb{R}^3,\xi_{st})$. We then cut out $3$-balls within these neighborhoods and glue back the singular bubbles with the $b$-contact structure of Lemma \ref{lemma:singularbubbleform} along the open shells of Lemma \ref{lemma:shells}. This produces a $b$-contact structure $(M',\xi_b)$ which is contactomorphic to $(M,\xi)$, with the diffeomorphism being the identity outside the contractible neighborhoods $U_i$ in $M$ and $\hat{U}_i$ in $M'$. We denote the shells along which we perform surgery by $S_i$, and by abuse of notation, we denote the new,  manifold after surgery by $M$ instead of $M'$.
   Thus we have obtained a $b$-contact structure on $M$ which has as a critical set $Z$ consisting of $N$ spheres. We will show that we can choose a $b$-contact form $\alpha_b$ defining $\xi_b$ whose associated Reeb vector field satisfies the statement of the proposition. 
   Thus, all that remains is to construct a global $b$-contact form whose Reeb $b$-vector field has $N$ singular periodic orbits.

    Let $\hat{\alpha}$ be a contact form defining the contact structure $\xi$, and $\alpha_i$ the $b$-contact forms of Lemma \ref{lemma:singularbubbleform} extended to the modified neighborhoods $\hat{U_i}$ so that the contact structures that they define coincide with $\xi$ along the open shells $S_i$. Since $\alpha_i$ and $\hat\alpha$ define the same contact structures on and outside the shells, they are related by positive smooth functions $f_i$. That is, there are $f_i>0$ defined on $\overline S_i$ such that $\alpha_i|_{S_i} = f_i\hat{\alpha}|_{S_i}$, as shown in Figure \ref{fig:extension}.
    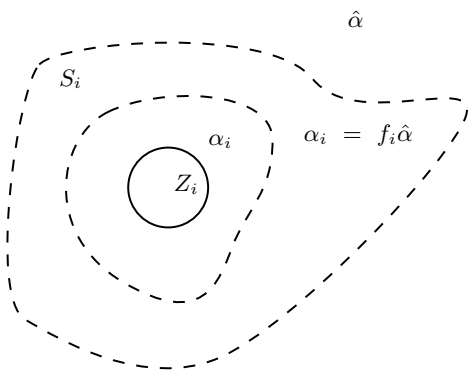
\begin{figure}[h]
        \centering
        \tikzset{every picture/.style={line width=0.75pt}} 

\begin{tikzpicture}[x=0.75pt,y=0.75pt,yscale=-1.3,xscale=1.3]

\draw   (290.5,142.5) .. controls (290.5,133.94) and (297.44,127) .. (306,127) .. controls (314.56,127) and (321.5,133.94) .. (321.5,142.5) .. controls (321.5,151.06) and (314.56,158) .. (306,158) .. controls (297.44,158) and (290.5,151.06) .. (290.5,142.5) -- cycle ;
\draw  [dash pattern={on 4.5pt off 4.5pt}] (282.5,113.25) .. controls (302.5,103.25) and (346.5,104) .. (346.5,124.25) .. controls (346.5,144.5) and (336.5,150.75) .. (331,165.25) .. controls (325.5,179.75) and (319.5,197.5) .. (288.5,179.25) .. controls (257.5,161) and (262.5,123.25) .. (282.5,113.25) -- cycle ;
\draw  [dash pattern={on 4.5pt off 4.5pt}] (256,94.75) .. controls (264.5,85.75) and (345,79.25) .. (362,100.25) .. controls (379,121.25) and (427,97.25) .. (421.5,114.75) .. controls (416,132.25) and (354,196.75) .. (325.5,208.75) .. controls (297,220.75) and (264.92,202.24) .. (250.5,193.75) .. controls (236.08,185.26) and (247.5,103.75) .. (256,94.75) -- cycle ;

\draw (307,136.5) node [anchor=north west][inner sep=0.75pt]  [font=\scriptsize] [align=left] {$\displaystyle Z_{i}$};
\draw (262.5,95.5) node [anchor=north west][inner sep=0.75pt]  [font=\scriptsize] [align=left] {$ S_{i}$};
\draw (374.5,72.5) node [anchor=north west][inner sep=0.75pt]  [font=\scriptsize] [align=left] {$\displaystyle \hat{\alpha }$};
\draw (320.5,121) node [anchor=north west][inner sep=0.75pt]  [font=\scriptsize] [align=left] {$\displaystyle \alpha _{i}$};
\draw (357.5,117) node [anchor=north west][inner sep=0.75pt]  [font=\scriptsize] [align=left] {$\displaystyle \alpha _{i} \ =\ f_{i}\hat{\alpha }$};

\end{tikzpicture}
        \caption{Gluing in the $b$-contact forms.}
        \label{fig:extension}
    \end{figure}
    We now take step functions $g_i\in C^\infty(M,[0,1])$ such that $g_i = 0$ outside the outer boundary of $S_i$ and $g_i = 1$ inside the inner boundary of $S_i$ (which contains the singular bubble), and define
    \begin{equation}\label{eq:bformbubble}
        \tilde\alpha := \hat{\alpha}(1 - \sum_i g_i) + \sum_i g_i \alpha_i.
    \end{equation}
    Obviously, $\tilde\alpha=\hat\alpha$ outside the outer boundary of the union of all $S_i$, and $\tilde\alpha=\alpha_i$ inside the inner boundary of each $S_i$. In the shell $S_i$, we have by construction
    \[
    \tilde\alpha=(1-g_i+f_ig_i)\hat\alpha\,.
    \]
    Noticing that $1-g_i+f_ig_i>0$ in $\overline S_i$ for all $i$, we conclude that $\tilde\alpha$ is a global $b$-contact form that extends the $\alpha_i$ outside of the singular bubbles. 
    
    The Reeb $b$-vector field $\tilde R_{\alpha}$ associated to $\tilde\alpha$ has at least $N$ singular periodic orbits (one within each bubble). Moreover, outside the bubbles, there are at most $2N$ escape orbits, which may join to produce additional singular periodic orbits. Finally, as observed in Remark \ref{rem:generalizedescape}, the Reeb $b$-vector field also exhibits an infinite amount of generalized escape orbits converging to each critical sphere.
\end{proof}

\subsection{Step~3: Breaking undesired singular periodic orbits}

In this final step, we show that we can perturb a $b$-contact form with finitely many escape orbits to \emph{break} undesired singular periodic orbits. This is the content of the following proposition, which actually only uses properties of smooth contact forms. 

\begin{proposition}\label{prop:breakspo}
    Let $(M,Z)$ be a $3$-dimensional $b$-manifold with $b$-contact form $\alpha$, and assume that the corresponding Reeb $b$-field $R_\alpha$ has finitely many escape orbits and $N$ singular periodic orbits. Then for every $0\leq k\leq N$ there is another $b$-contact form $\overline{\alpha}$ that coincides with $\alpha$ except on a small compact domain $K\subset M\backslash Z$, and its associated Reeb $b$-vector field $ R_{\overline{\alpha}}$ has exactly $k$ singular periodic orbits.  Moreover, $\overline\alpha$ can be made arbitrarily $C^\infty$-close to $\alpha$ on $K$.
\end{proposition}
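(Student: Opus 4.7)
The plan is to break the $N-k$ unwanted singular periodic orbits one at a time by compactly supported perturbations of $\alpha$ inside pairwise disjoint contact Darboux boxes, each threaded by exactly one orbit we wish to destroy. Label the singular periodic orbits so that $\gamma_1,\dots,\gamma_{N-k}$ are to be broken and $\gamma_{N-k+1},\dots,\gamma_N$ are to be preserved. Since by hypothesis the full set of escape orbits of $R_\alpha$ is finite, for each $i\in\{1,\dots,N-k\}$ I can pick a point $p_i\in\gamma_i$ and a small open neighborhood $K_i$ of $p_i$ such that the closures $\overline{K_i}$ are pairwise disjoint, contained in $M\setminus Z$, and meet no escape orbit of $R_\alpha$ other than $\gamma_i$. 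On $K_i$ I would choose Darboux coordinates $(x,y,z)$ in which $\alpha=dz+x\,dy$, $R_\alpha=\partial_z$ and $\gamma_i\cap K_i=\{x=y=0\}$, with the flow entering through a transverse disk $D_i^-\subset\{z=z_{\min}\}$ and exiting through $D_i^+\subset\{z=z_{\max}\}$.

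Inside each $K_i$ I would set
\[
\bar\alpha=(1+\epsilon\,g_i)\,\alpha,
\]
where $g_i$ is smooth with compact support in $K_i$ and $\epsilon>0$ is small enough that $1+\epsilon g_i>0$, so that $\bar\alpha$ is a contact form defining the same contact structure. A direct computation along the $z$-axis yields
\[
R_{\bar\alpha}\big|_{x=y=0}=\partial_z+\epsilon\bigl(g_{i,y}(0,0,z)\,\partial_x-g_{i,x}(0,0,z)\,\partial_y\bigr)+O(\epsilon^2).
\]
Choosing $g_i=\bigl(a_i(z)\,y-b_i(z)\,x\bigr)\chi(x,y,z)$ with $\chi$ a bump equal to $1$ near the $z$-axis and vanishing near $\partial K_i$, the transverse deflection along the central orbit is prescribed by $(a_i(z),b_i(z))$. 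Integrating, the orbit of $R_{\bar\alpha}$ entering $K_i$ at $(0,0,z_{\min})$ exits at $(\lambda_i,\mu_i,z_{\max})$, where $(\lambda_i,\mu_i)$ can be any sufficiently small prescribed vector. Outside $K_i$ the dynamics is unchanged, so the new orbit through $\gamma_i$'s entry point is the original backward semiorbit (still limiting to a point of $Z$) glued to the forward $R_\alpha$-trajectory starting at the new exit point.

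The essential accounting, which is the main delicate point, is the following. By the disjointness choice, the only original forward-escape orbit meeting $D_i^+$ crosses it at $(0,0,z_{\max})$, and similarly the only original backward-escape orbit meeting $D_i^-$ is $\gamma_i$ itself at the origin. Any choice $(\lambda_i,\mu_i)\neq(0,0)$ therefore places the new exit point outside the forward-escape set, so the deflected $\gamma_i$ has its $\omega$-limit no longer a single point of $Z$ and ceases to be singular periodic; conversely, any other orbit of $R_{\bar\alpha}$ crossing $K_i$ enters $D_i^-$ at a point whose backward trajectory (unchanged) does not accumulate on $Z$, so no new singular periodic orbit is born in $K_i$. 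Assembling the local modifications produces $\bar\alpha$ equal to $\alpha$ outside the compact set $K=\bigsqcup_i\overline{K_i}\subset M\setminus Z$; by the openness of the $b$-contact condition and for small $\epsilon$, $\bar\alpha$ is a $b$-contact form that is $C^\infty$-close to $\alpha$. Since the preserved orbits $\gamma_{N-k+1},\dots,\gamma_N$ avoid $K$ entirely, they persist verbatim, so $R_{\bar\alpha}$ has exactly $k$ singular periodic orbits. The finiteness of escape orbits is essential here: without it one could not ensure that a generic small deflection avoids accidentally matching backward-escape entry data with forward-escape exit data and creating new singular periodic orbits.
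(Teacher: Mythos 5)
Your proposal is correct and follows the same overall strategy as the paper: isolate each unwanted singular periodic orbit in a small Darboux box meeting no other escape orbit (possible by the finiteness hypothesis), perturb the contact form at order $\varepsilon$ inside the box, compute the Reeb field to first order and integrate it to show the central orbit exits transversally displaced, and then use the finiteness of escape orbits to conclude that the orbit is broken into two escape orbits and that no new singular periodic orbits appear, iterating over pairwise disjoint boxes. The one genuine difference is the perturbation device. The paper interpolates $\alpha$ with a second explicit contact form whose Reeb field is proportional to $\partial_z+\partial_\varphi$, so the deflection is a small \emph{rotation} about the chart axis; for this to displace the orbit, the paper must (and does) place the orbit \emph{off} the $z$-axis of the chart. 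You instead rescale conformally, $\overline{\alpha}=(1+\epsilon g)\alpha$ with $g$ linear in $(x,y)$ near the axis, which keeps the contact structure fixed (so contactness, and hence the $b$-contact condition, is automatic for $1+\epsilon g>0$) and produces a \emph{translation}-type deflection, allowing you to put the orbit on the axis; your first-order formula is correct, since for $\overline{\alpha}=f\alpha$ the correction $Y\in\ker\alpha$ solves $\iota_Y d\alpha|_{\ker\alpha}=f^{-2}df|_{\ker\alpha}$, which on the axis gives exactly $\epsilon\bigl(g_y\partial_x-g_x\partial_y\bigr)$ for your choice of $g$. Your final bookkeeping (the displaced exit point misses the forward-escape set, the preserved orbits avoid $K$, nothing new is created) is the same as the paper's, and you even flag explicitly where finiteness of escape orbits enters, matching the paper's remark that the chosen neighborhood contains no other escape orbits; so no gap relative to the paper's own level of detail.
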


\begin{proof}
    We first show how to break \textit{one} singular periodic orbit with a perturbation supported in the complement of $Z$. Let $p\in M\backslash Z$ be a point on a singular periodic orbit, and $U$ a Darboux neighborhood containing $p$ and intersecting no other escape orbits. Such a neighborhood exists because $R_\alpha$ has finitely many escape orbits. We assume that $p$ is sufficiently close to the origin but \textit{not} on the $z$-axis of this chart. Endowing $U$ with cylindrical coordinates (and after rescaling the $z$ direction), the contact form $\alpha|_U$ has the expression
    \begin{equation*}
        \alpha|_U = \frac{1}{2}dz + r^2d\varphi\,,
    \end{equation*}
and its associated Reeb vector field in this chart is $R_\alpha = 2\partial_z$, so all orbits are vertical lines (we refer to $\alpha|_{U}$ as simply $\alpha$ to simplify notation). In particular, the singular periodic orbit passing through $p$ is a vertical line in $U$ that does not contain the origin. Now consider another contact form on $U$,
\begin{equation}
    \alpha' = zrdr + \frac{1}{2}(1 + z^2 - r^2)dz + r^2d\varphi\,,
\end{equation}
whose Reeb vector field is proportional to $\partial_z + \partial_\varphi$. We will use this contact form to perturb $\alpha$. The result of this perturbation is shown in Figure \ref{fig:breakingspo}. 

To this end, it is convenient to introduce a compact set $K\subset U$, with $p\in K$, of the form $D_\delta\times [-\delta,\delta]$ in cylindrical coordinates, where $D_\delta$ is a closed $2$-disk of radius $\delta>0$. Take a smooth bump function $f:\mathbb{R}\rightarrow [0,1]$ which is equal to $1$ in $I_{ct}:=[-\delta/2,\delta/2]$, and whose support is contained in $(-\delta,\delta)$. Without loss of generality, we assume that the singular periodic orbit is a vertical line whose $r$-coordinate is in the interior of the interval $I_{ct}$. For any small $\varepsilon>0$,  consider the $1$-form
\begin{equation*}
    \tilde{\alpha} = (1-\varepsilon f(r)f(z))\alpha + \varepsilon f(r)f(z)\alpha',
\end{equation*}
defined on $M$, which coincides with $\alpha$ on $M\backslash K$, and is $C^\infty$-close to $\alpha$ on $K$ provided that $\varepsilon$ is small enough. Obviously, $\tilde\alpha$ is a $b$-contact form on $M$. 

We claim that $\tilde{\alpha}$ has one singular periodic orbit less than $\alpha$. To show this, we compute the Reeb field $ R_{\tilde \alpha}$. Since we are only interested in the singular periodic orbit, we can restrict our study to the set $D_{\delta/2}\times (-\delta,\delta)\subset K$, which contains the point $p$. A straightforward computation shows that on the set $D_{\delta/2}\times (-\delta,\delta)$ we have
\begin{equation*}
    \tilde{\alpha} =  \varepsilon f(z)zrdr + \frac{1}{2}(1 + \varepsilon f(z)(z^2 - r^2))dz + r^2d\varphi, 
\end{equation*}
and
\begin{align*}
    d\tilde{\alpha} &= \varepsilon(f'z + f)rdz\wedge dr - \varepsilon frdr\wedge dz + 2rdr\wedge d\varphi =\\
    &= \varepsilon(f'z + 2f)rdz\wedge dr + 2rdr\wedge d\varphi.
\end{align*}
To find the corresponding Reeb vector field $ R_{\tilde{\alpha}} = \tilde R^r\partial_r + \tilde R^\varphi \partial_\varphi + \tilde R^z \partial_z$, we see immediately that $\tilde R^r = 0$, and that $\tilde R^\varphi$ and $\tilde R^z$ satisfy
\begin{equation*}
    \varepsilon(\frac{1}{2}f'z + f)\tilde R^z = \tilde R^\varphi.
\end{equation*}
If $\varepsilon$ is small enough, we can compute the vector field $ R_{\tilde \alpha}$ in terms of an $\varepsilon$-expansion, which yields:
\[
\tilde R^r=0\,,\qquad \tilde R^\varphi= \varepsilon(f'(z)z + 2f(z))+O(\varepsilon^2)\,,\qquad \tilde R^z=2+O(\varepsilon)\,.
\]
Therefore, we can integrate approximately the integral curves of $ R_{\tilde \alpha}$. In the particular case of the trajectory that corresponds to the singular periodic orbit of $R_\alpha$, taking as initial condition the point $r_0=\delta_0<\delta/2$, $\varphi_0=0$, $z_0=-\delta$, we obtain
\[
r(t)=r_0\,,\qquad \varphi(t)=\frac12\varepsilon\int_{-\delta}^{-\delta+2t}\Big(f'(s)s+2f(s)\Big)ds + O(\varepsilon^2 t)\,, \qquad z(t)=-\delta + (2+O(\varepsilon))t\,.
\]
Since the time taken from the perturbed orbit to go from $\{z=-\delta\}$ to $\{z=\delta\}$ is $T=\delta + O(\varepsilon)$, we finally conclude that
\[
\varphi(T)=\frac12\varepsilon\int_{-\delta}^{\delta}\Big(f'(s)s+2f(s)\Big)ds + O(\varepsilon^2\delta)=\frac12\varepsilon\int_{-\delta}^\delta f(s)ds + O(\varepsilon^2\delta)\,,
\]
where we have integrated by parts and used that $f(z)=0$ near $z=\pm\delta$. It then follows from the fact that $f$ is a non-negative function that
\[
\varphi(T)=\frac{C}{2}\varepsilon\delta +O(\varepsilon^2\delta)>0,
\]
for some $C>0$, and hence the continuation of the singular periodic orbit along the flow of $ R_{\tilde{\alpha}}$ rotates slightly within a small cylindrical neighborhood of the origin, in addition to moving upwards (see Figure \ref{fig:breakingspo}). 

The upshot is that the semi-orbit of the singular periodic orbit coming into the cylindrical set $K$ from below no longer matches the semi-orbit coming out from above, and thus the singular periodic orbit is broken into two escape orbits. Furthermore, these semi-orbits do not coincide with any other semi-orbits of escape orbits, as the neighborhood we had taken had no other escape orbits. In summary, we have proved that the Reeb $b$-vector field $ R_{\tilde{\alpha}}$ has one singular periodic orbit less than $R_\alpha$. Iterating this process $N-k$ times with perturbations supported in pairwise disjoint compact sets $K_i$, we prove the statement of the proposition.

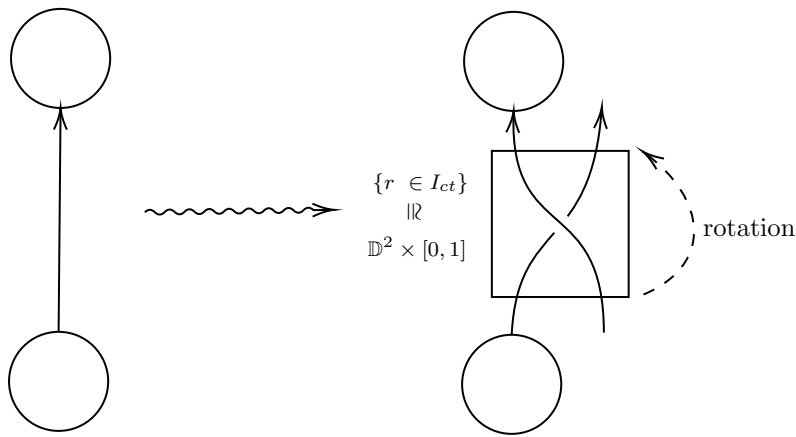
\begin{figure}[h]
    \centering
    \tikzset{every picture/.style={line width=0.75pt}} 

\begin{tikzpicture}[x=0.75pt,y=0.75pt,yscale=-1,xscale=1]

\draw   (76,44.83) .. controls (76,31.03) and (87.19,19.83) .. (101,19.83) .. controls (114.81,19.83) and (126,31.03) .. (126,44.83) .. controls (126,58.64) and (114.81,69.83) .. (101,69.83) .. controls (87.19,69.83) and (76,58.64) .. (76,44.83) -- cycle ;
\draw   (75,207.83) .. controls (75,194.03) and (86.19,182.83) .. (100,182.83) .. controls (113.81,182.83) and (125,194.03) .. (125,207.83) .. controls (125,221.64) and (113.81,232.83) .. (100,232.83) .. controls (86.19,232.83) and (75,221.64) .. (75,207.83) -- cycle ;
\draw    (100.98,71.83) -- (100,182.83) ;
\draw [shift={(101,69.83)}, rotate = 90.51] [color={rgb, 255:red, 0; green, 0; blue, 0 }  ][line width=0.75]    (10.93,-3.29) .. controls (6.95,-1.4) and (3.31,-0.3) .. (0,0) .. controls (3.31,0.3) and (6.95,1.4) .. (10.93,3.29)   ;
\draw   (304.5,46.33) .. controls (304.5,32.53) and (315.69,21.33) .. (329.5,21.33) .. controls (343.31,21.33) and (354.5,32.53) .. (354.5,46.33) .. controls (354.5,60.14) and (343.31,71.33) .. (329.5,71.33) .. controls (315.69,71.33) and (304.5,60.14) .. (304.5,46.33) -- cycle ;
\draw   (303.5,209.33) .. controls (303.5,195.53) and (314.69,184.33) .. (328.5,184.33) .. controls (342.31,184.33) and (353.5,195.53) .. (353.5,209.33) .. controls (353.5,223.14) and (342.31,234.33) .. (328.5,234.33) .. controls (314.69,234.33) and (303.5,223.14) .. (303.5,209.33) -- cycle ;
\draw    (143.5,122.5) .. controls (145.15,120.81) and (146.81,120.79) .. (148.5,122.43) .. controls (150.19,124.08) and (151.85,124.06) .. (153.5,122.37) .. controls (155.15,120.68) and (156.81,120.66) .. (158.5,122.3) .. controls (160.19,123.95) and (161.85,123.93) .. (163.5,122.24) .. controls (165.15,120.55) and (166.81,120.53) .. (168.5,122.17) .. controls (170.19,123.82) and (171.85,123.8) .. (173.5,122.11) .. controls (175.15,120.42) and (176.81,120.4) .. (178.5,122.04) .. controls (180.19,123.69) and (181.85,123.67) .. (183.5,121.98) .. controls (185.15,120.29) and (186.81,120.27) .. (188.5,121.91) .. controls (190.19,123.56) and (191.85,123.54) .. (193.5,121.85) .. controls (195.15,120.16) and (196.81,120.14) .. (198.5,121.78) .. controls (200.19,123.42) and (201.85,123.4) .. (203.49,121.71) .. controls (205.14,120.02) and (206.8,120) .. (208.49,121.65) .. controls (210.18,123.29) and (211.84,123.27) .. (213.49,121.58) .. controls (215.14,119.89) and (216.8,119.87) .. (218.49,121.52) .. controls (220.18,123.16) and (221.84,123.14) .. (223.49,121.45) .. controls (225.14,119.76) and (226.8,119.74) .. (228.49,121.39) -- (229,121.38) -- (237,121.28) ;
\draw [shift={(239,121.25)}, rotate = 179.25] [color={rgb, 255:red, 0; green, 0; blue, 0 }  ][line width=0.75]    (10.93,-3.29) .. controls (6.95,-1.4) and (3.31,-0.3) .. (0,0) .. controls (3.31,0.3) and (6.95,1.4) .. (10.93,3.29)   ;
\draw   (318.5,91.5) -- (387.5,91.5) -- (387.5,165.25) -- (318.5,165.25) -- cycle ;
\draw    (328.5,184.33) .. controls (331.49,125.05) and (368.63,143.07) .. (373.92,70.35) ;
\draw [shift={(374,69.25)}, rotate = 93.87] [color={rgb, 255:red, 0; green, 0; blue, 0 }  ][line width=0.75]    (10.93,-3.29) .. controls (6.95,-1.4) and (3.31,-0.3) .. (0,0) .. controls (3.31,0.3) and (6.95,1.4) .. (10.93,3.29)   ;
\draw  [dash pattern={on 4.5pt off 4.5pt}]  (395,164) .. controls (430.46,148.98) and (427.11,115.77) .. (395.94,92.79) ;
\draw [shift={(394.5,91.75)}, rotate = 35.29] [color={rgb, 255:red, 0; green, 0; blue, 0 }  ][line width=0.75]    (10.93,-3.29) .. controls (6.95,-1.4) and (3.31,-0.3) .. (0,0) .. controls (3.31,0.3) and (6.95,1.4) .. (10.93,3.29)   ;
\draw  [draw opacity=0][fill={rgb, 255:red, 255; green, 255; blue, 255 }  ,fill opacity=1 ] (347.61,128.38) .. controls (347.61,125.4) and (350.02,122.98) .. (353,122.98) .. controls (355.98,122.98) and (358.39,125.4) .. (358.39,128.38) .. controls (358.39,131.35) and (355.98,133.77) .. (353,133.77) .. controls (350.02,133.77) and (347.61,131.35) .. (347.61,128.38) -- cycle ;
\draw    (329.43,73.36) .. controls (327.47,139.2) and (375,112.47) .. (375,183.25) ;
\draw [shift={(329.5,71.33)}, rotate = 92.51] [color={rgb, 255:red, 0; green, 0; blue, 0 }  ][line width=0.75]    (10.93,-3.29) .. controls (6.95,-1.4) and (3.31,-0.3) .. (0,0) .. controls (3.31,0.3) and (6.95,1.4) .. (10.93,3.29)   ;

\draw (254.5,133.5) node [anchor=north west][inner sep=0.75pt]  [font=\tiny] [align=left] {$\displaystyle \mathbb{D}^{2} \times [ 0,1]$};
\draw (423.5,124) node [anchor=north west][inner sep=0.75pt]  [font=\footnotesize] [align=left] {rotation};
\draw (257,101) node [anchor=north west][inner sep=0.75pt]  [font=\tiny] [align=left] {$\displaystyle \{r\ \in I_{ct}\}$};
\draw (284.54,116.02) node [anchor=north west][inner sep=0.75pt]  [font=\scriptsize,rotate=-90.31] [align=left] {$\displaystyle \cong $};

\end{tikzpicture}
    \caption{Illustration of the perturbation used to break singular periodic orbits. The circles represent the critical spheres. On the left, a singular periodic orbit is shown. On the right, it is broken into two escape orbits after a small perturbation of the $b$-contact form.}
    \label{fig:breakingspo}
\end{figure}
\end{proof}

\begin{remark}
The above proposition shows that the set of $b$-contact forms with no singular periodic orbits is dense (in the $C^\infty$-topology) in the space of $b$-contact forms with finitely many escape orbits.
\end{remark}

The proof of Theorem~\ref{thm:nspo} is then straightforward by combining Propositions~\ref{prop:addsingularbubbles} and~\ref{prop:breakspo}:

\begin{proof}[Proof of Theorem \ref{thm:nspo}]
Take the $3$-dimensional contact manifold $(M,\xi)$ and add in $N$ singular bubbles as in Proposition \ref{prop:addsingularbubbles}. This yields a $b$-contact form $\tilde\alpha$ with a critical set $Z$ that consists of $N$ copies of $\mathbb S^2$ and whose corresponding Reeb $b$-vector field has at least $N$ singular periodic orbits and finitely many escape orbits. We can then use Proposition~\ref{prop:breakspo} to construct a second $b$-contact form $\alpha$ so that exactly $k$ singular periodic orbits remain. Taking the support of the perturbation to be contained in a neighborhood of the bubbles, we finally infer that the associated $b$-contact structure coincides with $\xi$ outside a neighborhood of the balls enclosed by the critical set $Z$. The existence of infinitely many generalized escape orbits follows immediately from the properties of the bubbles. This completes the proof of the theorem.
\end{proof}

In the following section we use some of the techniques presented here to give a counterexample to the singular Weinstein conjecture as stated in \cite{MO21}. More precisely, we use Proposition~\ref{prop:breakspo} to break undesired singular periodic orbits.

\section{A counterexample to the singular Weinstein conjecture.}\label{sec:counterexample}

As explained in the introduction, since compactifying contact manifolds often leads to singular contact manifolds, it becomes important to study singular Reeb dynamics. In particular, it would be interesting to have a singular counterpart to the Weinstein conjecture. In \cite{MO21} a singular counterpart to the Weinstein conjecture for $b$-contact manifolds was put forth.

\begin{conjecture}\label{conj:singWeinst}
    Let $(M,Z,\alpha)$ be a $b$-contact manifold. Then there exists either a periodic orbit away from the critical set $Z$ or a singular periodic orbit.
\end{conjecture}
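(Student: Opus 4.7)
The plan is to attack Conjecture~\ref{conj:singWeinst} by exploiting the semi-local results of Theorem~\ref{thm:2norinfty} together with the Hamiltonian character of the dynamics on $Z$ given by Proposition~\ref{prop:reebatz}, and to close the argument by a global matching/degree argument forcing at least one pair of escape orbits to glue into a singular periodic orbit. I will restrict to the 3-dimensional coorientable case; the higher-dimensional case would be handled by analogous compactness-and-degree reasoning once one sets up a notion of escape orbit near strata of the critical set.

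First, I would fix a $b$-contact form $\alpha=f\frac{dz}{z}+\beta$ and analyze the critical set $Z$. By Proposition~\ref{prop:reebatz}, the restriction $R_\alpha|_Z$ is the Hamiltonian vector field of $-f|_Z$ with respect to the symplectic form $\omega$ on $Z$. If $R_\alpha$ has a periodic orbit in $M\setminus Z$ we are done, so I would assume there are none. The zeros of $R_\alpha|_Z$ are the critical points of $f|_Z$; after a small perturbation of $\alpha$ (within the class preserving the topology of $Z$) one may assume $f|_Z$ is Morse. Then Theorem~\ref{thm:2norinfty} supplies at least $2N$ escape orbits accumulating on the set of zeros of $R_\alpha|_Z$ (and infinitely many when the first Betti number of $Z$ is positive), split into $\omega$-escape and $\alpha$-escape orbits.

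The core step is a matching argument. I would introduce, for $\varepsilon>0$ small, the two-sided tubular thickening $N_\varepsilon(Z)\setminus Z$ and define two sets
\[
E^+=\{\text{points }p\in M\setminus Z\mid \omega(p)\subset Z\text{ is a zero of }R_\alpha|_Z\},\qquad
E^-=\{p\mid \alpha(p)\subset Z\text{ is a zero}\},
\]
and consider the (possibly infinite) collection of escape orbits $\mathcal O^\pm$ they carry. Following the generic analysis of \cite{MOP22,FMOP23}, each hyperbolic zero $p_i$ of $-f|_Z$ carries local stable/unstable $b$-manifolds that intersect $M\setminus Z$ in two 1-dimensional submanifolds; call them $W^s_i$ and $W^u_i$. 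A singular periodic orbit is exactly a point of $W^u_i\cap W^s_j$ for some $i,j$. In the absence of regular periodic orbits, the Poincar\'e return map on a global transverse section $\Sigma\subset M\setminus Z$ (which exists because $M\setminus Z$ carries a complete Reeb flow, using a contact-type argument applied to the Liouville completion away from $Z$) should have no fixed points, forcing a nontrivial intersection $W^u_i\cap W^s_j\neq\emptyset$ by a wandering-set/Poincar\'e--Bendixson-style dichotomy in the compact ambient manifold. I would formalise this via a Conley-index or degree count: the sum of indices of the invariant set $Z$ computed inside $M$ must agree with the Euler characteristic of $M$, and I would pair this against the indices of $-f|_Z$ on $Z$ to force a discrepancy unless at least one heteroclinic trajectory between zeros exists in $M\setminus Z$.

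The main obstacle, and where I expect the strategy to be delicate, is precisely the matching step: Theorem~\ref{thm:2norinfty} guarantees abundant one-sided escape orbits, but it does not see how $\omega$-escape semi-orbits from one zero connect to $\alpha$-escape semi-orbits from another. In the smooth Weinstein setting the analogous closure step is supplied by pseudoholomorphic curve techniques (Hofer's plane, ECH), and one would need a singular analogue: either a $b$-symplectic filling of a neighborhood of $Z$ together with a Bishop family argument converging to a broken finite-energy plane whose boundary encodes a singular periodic orbit, or a direct Morse-theoretic chain complex built from escape orbits whose homology forces non-triviality. It is here that rigidity could fail, and a careful bookkeeping of orientations and a-priori energy bounds across $Z$ is the part that would need to be made fully rigorous; as the subsequent counterexample in Theorem~\ref{thm:introcounterexample} shows, any such argument must break down in general, and identifying precisely which step collapses would itself be instructive.
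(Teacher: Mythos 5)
The statement you set out to prove is false, and the paper's relationship to it is the opposite of yours: Conjecture~\ref{conj:singWeinst} is recalled only in order to be refuted. Theorem~\ref{thm:counterexample} produces a $b$-contact form on $(\mathbb{S}^3,\mathbb{S}^2)$ with no periodic orbits away from $Z$ and no singular periodic orbits, by taking the Hopf $b$-vector field of Section~\ref{sec:bhopf} (which has exactly two singular periodic orbits, no other escape orbits, and no periodic orbits off $Z$, cf.\ Proposition~\ref{prop:bHopforbits}) and breaking those two orbits with the arbitrarily $C^\infty$-small, compactly supported perturbation of Proposition~\ref{prop:breakspo}. So no proof can succeed, and the useful exercise is to pinpoint where yours collapses.

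It collapses exactly at your ``matching step'', and for a more elementary reason than missing pseudoholomorphic machinery. First, Theorem~\ref{thm:2norinfty} concerns \emph{generic} $b$-contact forms, and perturbing $\alpha$ to make $f|_Z$ Morse changes the Reeb dynamics, so nothing follows for the original $\alpha$; moreover the counterexample shows that even when escape orbits exist they may number exactly two and be destroyable by a local perturbation. Second, your Conley-index/degree argument tacitly assumes that orbits in $M\setminus Z$ must limit onto \emph{zeros} of $R_\alpha|_Z$, so that abundance of escape orbits forces a heteroclinic connection. In the $b$-Hopf flow this is false: every orbit off the $z$-axis has $\alpha$- and $\omega$-limit sets equal to \emph{periodic orbits} on the critical $\mathbb{S}^2$ (they are generalized escape orbits, not escape orbits), and after the perturbation of Proposition~\ref{prop:breakspo} the two former singular periodic orbits also become generalized escape orbits, with no new periodic orbits created away from $Z$ because all limit sets remain on $Z$. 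Third, the ``global transverse section $\Sigma\subset M\setminus Z$'' you invoke does not exist for general Reeb flows, so there is no fixed-point-free return map to feed into a Poincar\'e--Bendixson-type dichotomy. The only positive statement in the direction of the conjecture that the paper salvages is Proposition~\ref{prop:genericSPOonZ}: generically there is a singular periodic orbit \emph{on} any component of $Z$ with positive genus; for $Z=\mathbb{S}^2$, as in the counterexample, even this fails, and what remains open is the generalized Weinstein conjecture (Conjecture~\ref{conj:gwc}) formulated with quasi-closed orbits, which the counterexample does not contradict.
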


In this section, we use the results of Section~\ref{sec:tools} to construct a $b$-contact form on $(\mathbb{S}^3,\mathbb{S}^2)$ which has no singular periodic orbits and no periodic orbits away from the critical set, which is a counterexample to Conjecture \ref{conj:singWeinst}. This example is based on  \cite[Example~6.7]{MO21}, which is the singular counterpart to the Hopf vector field.  We call it therefore the \emph{Hopf $b$-vector field}, though the authors did not identify it as such. The Hopf $b$-vector field was already shown to have no periodic orbits away from the critical set $Z = \mathbb{S}^2$, and it is also easy to see that it has finitely many escape orbits. We then can use Proposition \ref{prop:breakspo} to break off all singular periodic orbits without creating any new periodic orbits.

\subsection{The Hopf $b$-vector field}\label{sec:bhopf}
In this subsection, we present and study the dynamics of the Hopf $b$-vector field. We follow \cite[Example~6.7]{MO21}. Consider the standard $b$-symplectic Euclidean space $(\mathbb{R}^4,\omega)$, with
\begin{equation*}
    \omega = \frac{dx_1}{x_1}\wedge dy_1 + dx_2\wedge dy_2.
\end{equation*}
We can construct a $b$-contact form $\alpha = \iota_X \omega$ on $(\mathbb{S}^3,\mathbb{S}^2)$ by contracting the $b$-symplectic form with a Liouville vector field $X$ transverse to $\mathbb{S}^3$ given by
\begin{equation*}
    X = \frac{1}{2}(x_1\partial_{x_1} + 2y_1\partial_{y_1} + x_2\partial_{x_2} + y_2\partial_{y_2}).
\end{equation*}
The result is
\begin{equation*}\label{eq:b-HopfContact}
    \alpha = \frac{1}{2}(dy_1 - 2y_1\frac{dx_1}{x_1} + x_2dy_2 - y_2dx_2).
\end{equation*}
The associated Reeb $b$-vector field is
\begin{equation}\label{eq:bReebWeinst}
    R_\alpha = \frac{2}{1+y_1^2}(-x_1y_1\partial_{x_1} + x_1^2\partial_{y_1} - y_2\partial_{x_2} + x_2\partial_{y_2}).
\end{equation}
Making abstraction of the conformal factor, it is obvious that the vector field 
$$X_H := -x_1y_1\partial_{x_1} + x_1^2\partial_{y_1} - y_2\partial_{x_2} + x_2\partial_{y_2}$$
defines the same orbits as the vector field $R_\alpha$.
The subscript $H$ notation is justified by the fact that the vector field $X_H$ can be interpreted as the Hamiltonian vector field of two uncoupled harmonic oscillators on the standard $b$-symplectic manifold.  More precisely, the vector field $X_H$ is the Hamiltonian vector field with respect to the Hamiltonian $H = \frac{x_1^2 + y_1^2}{2} + \frac{x_2^2 + y_2^2}{2}$ on the standard $b$-symplectic $\mathbb{R}^4$. This motivates calling $X_H$ the \textit{Hopf $b$-vector field} when we restrict it to the energy level $H = 1$ (for another perspective on the smooth harmonic oscillator in the context of $b$-integrable systems, see \cite{KM17}).

As noted in \cite{MO21}, it is immediately clear from the expression of $X_H$ that it has no periodic orbits away from $Z = \mathbb{S}^2$, since the $(x_1,y_1)$ system has no periodic orbits.  To further understand the flow, it is convenient to use the stereographic projection onto $\mathbb{R}^3$ from the north pole $(1,0,0,0)$ of $\mathbb S^3$. The stereographic projection is given by
\begin{align*}
\Psi: \mathbb{S}^3\setminus (1,0,0,0) &\rightarrow \mathbb{R}^3 \\
(x_1,y_1,x_2,y_2)&\mapsto (\frac{x_2}{1-x_1},\frac{y_2}{1-x_1},\frac{y_1}{1-x_1}).
\end{align*}
The stereographic projection of the usual Hopf vector field $-y_1\partial_{x_1} + x_1\partial_{y_1} - y_2\partial_{x_2} + x_2\partial_{y_2}$ can be written in cylindrical coordinates $(\rho,\phi,z)$ as 
\begin{equation}\label{eq:smoothHopf}
-z\rho\partial_\rho - \frac{1 - \rho^2 + z^2}{2}\partial_z + \partial_\phi.
\end{equation}
In contrast, the stereographic projection of the Hopf $b$-vector field $-x_1y_1\partial_{x_1} + x_1^2\partial_{y_1} - y_2\partial_{x_2} + x_2\partial_{y_2}$ is (again, in cylindrical coordinates, and letting $r^2 = x^2 + y^2 + z^2$),
\begin{equation}\label{eq:bHopffield}
    \frac{r^2-1}{r^2+1}\left( -z\rho\partial_\rho - \frac{1 - \rho^2 + z^2}{2}\partial_z \right) + \partial_\phi.
\end{equation}
Notice that the only difference between Equations \eqref{eq:smoothHopf} and \eqref{eq:bHopffield} is the $\frac{r^2-1}{r^2+1}$ factor, which multiplies the $\partial_\rho$ and $\partial_z$ components, but crucially not the $\partial_\phi$ component, of the Hopf $b$-vector field. Figure \ref{fig:bHopf} shows the flow of the Hopf $b$-vector field on the y = 0 plane. Notice the white circle representing stationary points at the critical surface ${r = 1}$. The points on this circle rotate
according to $\partial_\phi$ along the parallels of the critical $\mathbb{S}^{2}$.

\begin{figure}[h]
    \centering
    \includegraphics{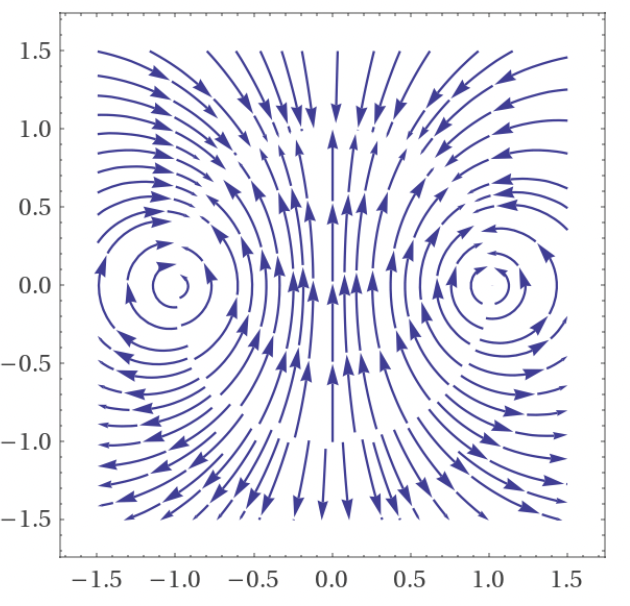}
    \caption{Illustration of the Hopf $b$-vector field on the $y=0$ section. It is essentially the smooth Hopf vector field rescaled as it approaches $r=1$.}
    \label{fig:bHopf}
\end{figure}
The smooth Hopf vector field has the interesting property that all orbits are periodic and pass through $r=1$. In the Hopf $b$-vector field flow, all of the periodic orbits are broken by the singular $\mathbb{S}^2$, and if the orbit along the smooth Hopf flow of a point $p\in\mathbb{S}^3$ intersects $\mathbb{S}^2$ at $z = \pm h$, then its orbit along the $b$-Hopf flow has $\alpha$- and $\omega$-limit sets $\mathbb{S}^2\cap\{z = \pm h\}$, because as it approaches $\mathbb{S}^2$, the rotation $\partial_\phi$ brings the orbit close to every point on $\mathbb{S}^2\cap\{z = \pm h\}$ infinitely often. Each orbit remains on the inside of, or the outside of (or on) the critical $\mathbb{S}^2$ for all time. In summary, the Hopf $b$-vector field has the following interesting properties.

\begin{proposition}\label{prop:bHopforbits}
    The orbits of the Hopf $b$-vector field on $\mathbb{S}^3$, and hence of the Reeb $b$-vector field $R_\alpha$ constructed before, are:
    \begin{enumerate}[a)]
        \item     Two singular periodic orbits (along the $z$ axis) and no other escape orbits.
        \item Two critical points on the north and south poles of the critical $\mathbb{S}^2$.
        \item Periodic orbits on all the parallels of the critical $\mathbb{S}^2$.
        \item Generalized escape orbits whose $\alpha$- and $\omega$-limit sets are the aforementioned periodic orbits on $\mathbb S^2$.
    \end{enumerate}
    
\end{proposition}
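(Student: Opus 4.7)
The plan is to exploit the fact that, up to the smooth positive conformal factor $2/(1+y_1^2)$ (which lies in $[1,2]$ on $\mathbb S^3$ and hence only reparametrizes time, preserving orbit-traces and limit sets), the Reeb $b$-vector field $R_\alpha$ agrees with the decoupled planar system
\[
X_H = -x_1y_1\partial_{x_1} + x_1^2\partial_{y_1} - y_2\partial_{x_2} + x_2\partial_{y_2}.
\]
First I would check that $I_1 := x_1^2+y_1^2$ and $I_2 := x_2^2+y_2^2$ are both first integrals of $X_H$; since $I_1+I_2=1$ on $\mathbb S^3$, the flow preserves the Hopf-like singular foliation by the tori $T_c := \{I_1 = c\}\cap \mathbb S^3$ for $c\in(0,1)$ together with the two degenerate leaves $T_1 = \{x_2=y_2=0\}$ (the great circle corresponding to the ``$z$-axis'' of stereographic coordinates) and $T_0 = \{x_1=y_1=0\}$ (the equator of the critical $\mathbb S^2$).

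Next, on each regular torus $T_c$ I would introduce angles $(x_1,y_1)=\sqrt{c}(\cos\theta_1,\sin\theta_1)$ and $(x_2,y_2)=\sqrt{1-c}(\cos\theta_2,\sin\theta_2)$, turning the dynamics into $\dot\theta_1 = \sqrt{c}\cos\theta_1$ and $\dot\theta_2 = 1$. The two fixed points $\theta_1 = \pm\pi/2$ of the $\theta_1$-equation are precisely where $T_c$ meets the critical set, namely the parallels $\{y_1=\pm\sqrt{c}\}\cap \mathbb S^2$. Between them $\theta_1$ is strictly monotone and reaches the equilibria only in infinite time, while $\theta_2$ rotates uniformly, so every trajectory on $T_c$ spirals around the torus infinitely often as $t\to\pm\infty$ and its $\alpha$- and $\omega$-limit sets are exactly these two parallels. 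This will yield (d) and simultaneously rule out any periodic, escape, or singular periodic orbit off the critical set on a regular leaf, since the limit sets are entire circles rather than single points.

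Finally I would examine the degenerate leaves and the critical sphere itself. On $T_1$ the restricted system $\dot x_1 = -x_1y_1$, $\dot y_1 = x_1^2$ with $x_1^2+y_1^2=1$ has exactly two zeros at $(0,\pm 1,0,0)$, and because $\dot y_1\geq 0$ the coordinate $y_1$ is monotone along each of the two arcs into which these zeros divide the great circle, giving precisely two singular periodic orbits and establishing (a). On $\mathbb S^2=\{x_1=0\}$ the field restricts to $-y_2\partial_{x_2}+x_2\partial_{y_2}$, whose only zeros are the north and south poles $(0,\pm 1,0,0)$ (which gives (b)) and whose remaining orbits are the parallels $\{y_1=\mathrm{const}\}\cap \mathbb S^2$, all of them periodic (which gives (c)); the degenerate leaf $T_0$ is precisely one such parallel, the equator. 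I do not anticipate any genuine obstacle; the only step requiring a bit of care is confirming that the $\alpha$- and $\omega$-limit sets of the spiralling orbits on each regular $T_c$ are full parallels rather than single points, which will follow from the infinite-time convergence of $\theta_1$ to $\pm\pi/2$ combined with the uniform $\theta_2$-rotation.
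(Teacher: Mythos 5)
Your proof is correct, and it takes a noticeably different route from the paper's. The paper argues in stereographic coordinates on $\mathbb{R}^3$: it writes the Hopf $b$-vector field as the smooth Hopf field with its $\partial_\rho$ and $\partial_z$ components rescaled by $\frac{r^2-1}{r^2+1}$ (but not the $\partial_\phi$ component), and then reasons qualitatively that the smooth Hopf orbits crossing $r=1$ are "broken" by the critical sphere, the unrescaled rotation forcing the limit sets to be whole parallels. You instead stay on $\mathbb{S}^3$ in the ambient $\mathbb{R}^4$, observe that $x_1^2+y_1^2$ and $x_2^2+y_2^2$ are first integrals of $X_H$ (and that the conformal factor $2/(1+y_1^2)$ only reparametrizes time), and reduce the dynamics on each invariant torus $T_c$ to the explicit system $\dot\theta_1=\sqrt{c}\cos\theta_1$, $\dot\theta_2=1$, from which the identification of the $\alpha$- and $\omega$-limit sets with the two parallels, the exclusion of periodic/escape/singular periodic orbits on regular leaves, and the classification on the degenerate leaves $T_1$ and $Z$ all follow by direct computation. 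What your approach buys is a rigorous and exhaustive case analysis (every orbit lies on exactly one leaf of the foliation, and the "limit sets are full circles, not points" step is an honest computation rather than an appeal to the picture); what the paper's approach buys is the geometric picture in the stereographic chart that is reused later (the figures, and the perturbation of Proposition \ref{prop:breakspo} applied near the $z$-axis), together with the direct comparison to the classical Hopf fibration. One cosmetic remark: for item b) you verify the zeros of the restriction of the field to $Z$; it is worth adding the one-line observation that $X_H=0$ already forces $x_1=x_2=y_2=0$, so these two poles are the only zeros of the field on all of $\mathbb{S}^3$, which closes the classification cleanly.
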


\begin{figure}[h]
    \centering
    \tikzset{every picture/.style={line width=0.75pt}} 

\begin{tikzpicture}[x=0.75pt,y=0.75pt,yscale=-1.2,xscale=1.2]

\draw   (281.63,144.58) .. controls (281.63,120.7) and (303.2,101.34) .. (329.8,101.34) .. controls (356.4,101.34) and (377.97,120.7) .. (377.97,144.58) .. controls (377.97,168.45) and (356.4,187.81) .. (329.8,187.81) .. controls (303.2,187.81) and (281.63,168.45) .. (281.63,144.58) -- cycle ;
\draw  [fill={rgb, 255:red, 0; green, 0; blue, 0 }  ,fill opacity=1 ] (327.48,101.34) .. controls (327.48,100.77) and (328,100.3) .. (328.64,100.3) .. controls (329.28,100.3) and (329.8,100.77) .. (329.8,101.34) .. controls (329.8,101.91) and (329.28,102.38) .. (328.64,102.38) .. controls (328,102.38) and (327.48,101.91) .. (327.48,101.34) -- cycle ;
\draw  [fill={rgb, 255:red, 0; green, 0; blue, 0 }  ,fill opacity=1 ] (328.81,187.96) .. controls (328.81,187.39) and (329.33,186.92) .. (329.97,186.92) .. controls (330.61,186.92) and (331.13,187.39) .. (331.13,187.96) .. controls (331.13,188.53) and (330.61,189) .. (329.97,189) .. controls (329.33,189) and (328.81,188.53) .. (328.81,187.96) -- cycle ;
\draw   (294,116.4) .. controls (294,114.13) and (310.04,112.29) .. (329.83,112.29) .. controls (349.62,112.29) and (365.67,114.13) .. (365.67,116.4) .. controls (365.67,118.66) and (349.62,120.5) .. (329.83,120.5) .. controls (310.04,120.5) and (294,118.66) .. (294,116.4) -- cycle ;
\draw   (344.27,118.9) -- (351.28,119.73) -- (344.78,121.5) ;
\draw    (330.33,224.67) -- (330.02,192.83) ;
\draw [shift={(330,190.83)}, rotate = 89.44] [color={rgb, 255:red, 0; green, 0; blue, 0 }  ][line width=0.75]    (4.37,-1.32) .. controls (2.78,-0.56) and (1.32,-0.12) .. (0,0) .. controls (1.32,0.12) and (2.78,0.56) .. (4.37,1.32)   ;
\draw    (328.73,97.33) -- (328.99,62.67) ;
\draw [shift={(329,60.67)}, rotate = 90.42] [color={rgb, 255:red, 0; green, 0; blue, 0 }  ][line width=0.75]    (4.37,-1.32) .. controls (2.78,-0.56) and (1.32,-0.12) .. (0,0) .. controls (1.32,0.12) and (2.78,0.56) .. (4.37,1.32)   ;
\draw   (292.67,171.37) .. controls (292.67,169.45) and (309.31,167.9) .. (329.83,167.9) .. controls (350.36,167.9) and (367,169.45) .. (367,171.37) .. controls (367,173.28) and (350.36,174.84) .. (329.83,174.84) .. controls (309.31,174.84) and (292.67,173.28) .. (292.67,171.37) -- cycle ;
\draw   (344.81,173.49) -- (352.08,174.19) -- (345.34,175.69) ;
\draw   (319.02,166.83) -- (312.35,168.4) -- (319.65,169.03) ;

\draw [color={rgb, 255:red, 3; green, 0; blue, 0 }  ,draw opacity=1 ]   (328.71,104.56) -- (329.16,119) ;
\draw [color={rgb, 255:red, 3; green, 0; blue, 0 }  ,draw opacity=1 ]   (329.16,122.78) -- (329.6,172.78) ;
\draw [color={rgb, 255:red, 3; green, 0; blue, 0 }  ,draw opacity=1 ]   (330,176.56) -- (330,183.35) ;
\draw [shift={(330,185.35)}, rotate = 270] [color={rgb, 255:red, 3; green, 0; blue, 0 }  ,draw opacity=1 ][line width=0.75]    (4.37,-1.32) .. controls (2.78,-0.56) and (1.32,-0.12) .. (0,0) .. controls (1.32,0.12) and (2.78,0.56) .. (4.37,1.32)   ;
\draw    (319.33,177.17) .. controls (332.67,177.83) and (371.83,182.7) .. (393.58,173.1) .. controls (415.33,163.5) and (421.39,141.88) .. (417.67,122.83) .. controls (413.95,103.79) and (398.33,95.33) .. (377.33,97.83) .. controls (356.33,100.33) and (334.7,107.16) .. (304.67,108.5) .. controls (275.38,109.8) and (312.71,114.27) .. (337.76,117.27) ;
\draw [shift={(339.67,117.5)}, rotate = 186.84] [color={rgb, 255:red, 0; green, 0; blue, 0 }  ][line width=0.75]    (7.65,-2.3) .. controls (4.86,-0.97) and (2.31,-0.21) .. (0,0) .. controls (2.31,0.21) and (4.86,0.98) .. (7.65,2.3)   ;

\draw (356.7,75.3) node [anchor=north west][inner sep=0.75pt]  [font=\scriptsize] [align=left] {$\displaystyle Z\ =\ \mathbb{S}^{2}$};
\draw (331.6,140) node [anchor=north west][inner sep=0.75pt]  [font=\tiny] [align=left] {$\displaystyle a)$};
\draw (333.13,190.96) node [anchor=north west][inner sep=0.75pt]  [font=\tiny] [align=left] {$\displaystyle b)$};
\draw (295.6,158.8) node [anchor=north west][inner sep=0.75pt]  [font=\tiny] [align=left] {$\displaystyle c)$};
\draw (421.2,118) node [anchor=north west][inner sep=0.75pt]  [font=\tiny] [align=left] {$\displaystyle d)$};

\end{tikzpicture}
    \caption{Stereographic projection of some orbits of the Hopf $b$-vector field. The four types of orbits are depicted. The orbits exiting from the north pole and arriving at the south pole outside of the $\mathbb{S}^2$ are part of the same singular periodic orbit on $\mathbb{S}^3$, though in this figure they look like two separate escape orbits.}
    \label{fig:bHopf3D}
\end{figure}
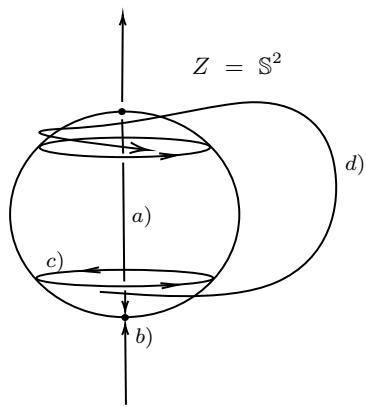

\subsection{The counterexample}\label{subsec:counterexample}
We now come back to the original goal of this section, which is to prove the following result.
\begin{theorem}\label{thm:counterexample}
There is a $b$-contact form on the $b$-manifold $(\mathbb S^3,\mathbb S^2)$ which has no singular periodic orbits and no periodic orbits away from the critical set.
\end{theorem}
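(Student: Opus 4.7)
The plan is to start with the Hopf $b$-vector field of Section~\ref{sec:bhopf}, whose Reeb $b$-vector field $R_\alpha$ has, by Proposition~\ref{prop:bHopforbits}, exactly two singular periodic orbits (the two arcs of the $z$-axis joining the zeros of $R_\alpha$ on $\mathbb{S}^2$), no other escape orbits, and no periodic orbits away from the critical set $Z = \mathbb{S}^2$. Since it admits only finitely many escape orbits, Proposition~\ref{prop:breakspo} applies with $N = 2$ and $k = 0$: it yields a $b$-contact form $\bar\alpha$ that coincides with $\alpha$ outside a compact set $K\subset\mathbb{S}^3\setminus\mathbb{S}^2$ (the disjoint union of two Darboux neighborhoods, one on each singular periodic orbit), can be chosen arbitrarily $C^\infty$-close to $\alpha$ on $K$, and whose Reeb $b$-vector field $R_{\bar\alpha}$ has no singular periodic orbits. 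This $\bar\alpha$ is the candidate counterexample.

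The remaining task is to check that $R_{\bar\alpha}$ does not pick up any new periodic orbits in $\mathbb{S}^3\setminus\mathbb{S}^2$. The key observation is that the explicit formula~\eqref{eq:bReebWeinst} exhibits the $y_1$-component of $R_\alpha$ as
\[
\dot y_1 = \frac{2\, x_1^2}{1 + y_1^2},
\]
which is strictly positive on $\mathbb{S}^3\setminus\mathbb{S}^2$ because the critical set is precisely $\{x_1 = 0\}\cap\mathbb{S}^3$. Hence $y_1$ is a strictly monotonic coordinate along every orbit of $R_\alpha$ in $\mathbb{S}^3\setminus\mathbb{S}^2$, which is itself an alternative reproof that $R_\alpha$ admits no periodic orbits away from $Z$.

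To transfer this monotonicity to $R_{\bar\alpha}$: since $K$ is compact and disjoint from $\mathbb{S}^2$, the function $x_1^2$ attains a positive minimum on $K$, so $\dot y_1$ with respect to $R_\alpha$ is bounded below by a positive constant on $K$. The Reeb $b$-vector field depends continuously on the defining $b$-contact form in the $C^\infty$-topology, and $R_{\bar\alpha} = R_\alpha$ off $K$, so the freedom in Proposition~\ref{prop:breakspo} to make $\bar\alpha$ arbitrarily $C^\infty$-close to $\alpha$ on $K$ guarantees that $\dot y_1 > 0$ along $R_{\bar\alpha}$ throughout $K$, and hence everywhere on $\mathbb{S}^3\setminus\mathbb{S}^2$. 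This rules out periodic orbits of $R_{\bar\alpha}$ outside $Z$. The main technical point is calibrating the two local perturbations so that both singular periodic orbits get broken while the strict positivity of $\dot y_1$ is simultaneously preserved; this is unproblematic because Proposition~\ref{prop:breakspo} permits arbitrarily small $C^\infty$-perturbations and the two perturbation regions can be taken pairwise disjoint.
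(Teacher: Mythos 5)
Your proposal is correct and follows the same overall route as the paper: start from the Hopf $b$-vector field of Section~\ref{sec:bhopf}, invoke Proposition~\ref{prop:bHopforbits} (two singular periodic orbits, no other escape orbits, no periodic orbits off $Z$), and then apply Proposition~\ref{prop:breakspo} with $N=2$, $k=0$. Where you diverge is in the last verification, that the perturbation creates no new periodic orbits away from $Z$. The paper argues via limit sets: since $R_{\alpha'}$ coincides with $R_\alpha$ outside the small compact set $K$ and every orbit of $R_\alpha$ off the critical surface has its $\alpha$- and $\omega$-limit sets on $\mathbb S^2$, the same holds for $R_{\alpha'}$, excluding closed orbits. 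You instead exhibit the explicit monotone quantity $\dot y_1=\tfrac{2x_1^2}{1+y_1^2}>0$ from \eqref{eq:bReebWeinst} on $\mathbb S^3\setminus Z=\{x_1\neq 0\}\cap\mathbb S^3$, note that it is bounded below by a positive constant on the compact perturbation region $K$, and use the ``arbitrarily $C^\infty$-close on $K$'' clause of Proposition~\ref{prop:breakspo} (plus continuity of $\alpha\mapsto R_\alpha$, which needs $C^1$-closeness of the forms and uniform nondegeneracy on $K$, both available) to preserve strict monotonicity of $y_1$ along $R_{\bar\alpha}$. This is a legitimate and in some ways tighter argument: it is quantitative, avoids any discussion of recurrence to $K$ or of how limit sets behave under the perturbation, and simultaneously reproves the absence of periodic orbits for the unperturbed field. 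The paper's limit-set argument, on the other hand, is softer and does not rely on the explicit coordinate expression, so it would survive modifications of the model for which no global monotone coordinate is apparent. Both are valid; your only obligation, which you correctly flag, is that the smallness needed for monotonicity is compatible with the breaking of both orbits, and Proposition~\ref{prop:breakspo} guarantees exactly that.
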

\begin{proof}
    The Reeb $b$-vector field $R_\alpha$ on $(\mathbb S^3,\mathbb S^2)$ constructed in Section~\ref{sec:bhopf} has two singular periodic orbits, no other escape orbits, and no periodic orbits away from the critical surface, cf. Proposition~\ref{prop:bHopforbits}. We can then use Proposition~\ref{prop:breakspo} to construct a $b$-contact form $\alpha'$ on $(\mathbb S^3,\mathbb S^2)$ where the two singular periodic orbits \emph{break} (and no new ones arise). Furthermore, the Reeb $b$-vector field $R_{\alpha'}$ coincides with $R_\alpha$ in the complement of a small compact set away from the critical surface, so it follows that no periodic orbits arise away from the critical $\mathbb{S}^2$. The reason is that the $\alpha$- and $\omega$-limit sets of all orbits of $R_{\alpha}$ away from the critical surface are on the critical $\mathbb{S}^2$ by Proposition~\ref{prop:bHopforbits}, so the same holds true for the orbits of $R_{\alpha'}$. Figure \ref{fig:bHopfPerturbed} shows the orbits of the perturbed Hopf $b$-vector field. This concludes the construction of the counterexample.

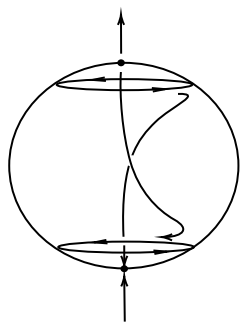
\begin{figure}[h]
    \centering
    \tikzset{every picture/.style={line width=0.75pt}} 

\begin{tikzpicture}[x=0.75pt,y=0.75pt,yscale=-1.2,xscale=1.2]

\draw    (346.57,117.43) .. controls (357.71,117.14) and (344,122.44) .. (337,129.3) .. controls (330,136.16) and (322.2,145.2) .. (323.67,177.5) ;
\draw   (275.63,147.58) .. controls (275.63,123.7) and (297.2,104.34) .. (323.8,104.34) .. controls (350.4,104.34) and (371.97,123.7) .. (371.97,147.58) .. controls (371.97,171.45) and (350.4,190.81) .. (323.8,190.81) .. controls (297.2,190.81) and (275.63,171.45) .. (275.63,147.58) -- cycle ;
\draw  [fill={rgb, 255:red, 0; green, 0; blue, 0 }  ,fill opacity=1 ] (321.48,104.34) .. controls (321.48,103.77) and (322,103.3) .. (322.64,103.3) .. controls (323.28,103.3) and (323.8,103.77) .. (323.8,104.34) .. controls (323.8,104.91) and (323.28,105.38) .. (322.64,105.38) .. controls (322,105.38) and (321.48,104.91) .. (321.48,104.34) -- cycle ;
\draw  [fill={rgb, 255:red, 0; green, 0; blue, 0 }  ,fill opacity=1 ] (322.81,190.96) .. controls (322.81,190.39) and (323.33,189.92) .. (323.97,189.92) .. controls (324.61,189.92) and (325.13,190.39) .. (325.13,190.96) .. controls (325.13,191.53) and (324.61,192) .. (323.97,192) .. controls (323.33,192) and (322.81,191.53) .. (322.81,190.96) -- cycle ;
\draw    (324.2,213.2) -- (323.99,196) ;
\draw [shift={(323.97,194)}, rotate = 89.31] [color={rgb, 255:red, 0; green, 0; blue, 0 }  ][line width=0.75]    (4.37,-1.32) .. controls (2.78,-0.56) and (1.32,-0.12) .. (0,0) .. controls (1.32,0.12) and (2.78,0.56) .. (4.37,1.32)   ;
\draw    (322.67,100.5) -- (322.61,86) ;
\draw [shift={(322.6,84)}, rotate = 89.77] [color={rgb, 255:red, 0; green, 0; blue, 0 }  ][line width=0.75]    (4.37,-1.32) .. controls (2.78,-0.56) and (1.32,-0.12) .. (0,0) .. controls (1.32,0.12) and (2.78,0.56) .. (4.37,1.32)   ;
\draw   (296.25,181.86) .. controls (296.25,180.58) and (309.01,179.55) .. (324.75,179.55) .. controls (340.49,179.55) and (353.25,180.58) .. (353.25,181.86) .. controls (353.25,183.14) and (340.49,184.18) .. (324.75,184.18) .. controls (309.01,184.18) and (296.25,183.14) .. (296.25,181.86) -- cycle ;
\draw   (336.23,183.28) -- (341.81,183.75) -- (336.64,184.75) ;
\draw   (316.46,178.84) -- (311.35,179.88) -- (316.94,180.3) ;

\draw [color={rgb, 255:red, 3; green, 0; blue, 0 }  ,draw opacity=1 ]   (323.97,181.13) -- (323.97,187.92) ;
\draw [shift={(323.97,189.92)}, rotate = 270] [color={rgb, 255:red, 3; green, 0; blue, 0 }  ,draw opacity=1 ][line width=0.75]    (4.37,-1.32) .. controls (2.78,-0.56) and (1.32,-0.12) .. (0,0) .. controls (1.32,0.12) and (2.78,0.56) .. (4.37,1.32)   ;
\draw   (295.58,113.53) .. controls (295.58,112.25) and (308.34,111.21) .. (324.08,111.21) .. controls (339.82,111.21) and (352.58,112.25) .. (352.58,113.53) .. controls (352.58,114.81) and (339.82,115.85) .. (324.08,115.85) .. controls (308.34,115.85) and (295.58,114.81) .. (295.58,113.53) -- cycle ;
\draw   (335.57,114.95) -- (341.14,115.42) -- (335.98,116.42) ;
\draw   (315.79,110.5) -- (310.68,111.54) -- (316.28,111.97) ;

\draw  [draw opacity=0][fill={rgb, 255:red, 255; green, 255; blue, 255 }  ,fill opacity=1 ] (324.77,145.65) .. controls (324.77,144.32) and (325.85,143.23) .. (327.19,143.23) .. controls (328.53,143.23) and (329.62,144.32) .. (329.62,145.65) .. controls (329.62,146.99) and (328.53,148.08) .. (327.19,148.08) .. controls (325.85,148.08) and (324.77,146.99) .. (324.77,145.65) -- cycle ;
\draw    (322.57,108.22) .. controls (321.95,117.63) and (323.1,134.09) .. (326.6,146.1) .. controls (330.1,158.11) and (337.6,166.1) .. (345,170.3) .. controls (351.81,174.16) and (348.64,177.28) .. (341.63,177.62) ;
\draw [shift={(339.72,177.64)}, rotate = 1.17] [color={rgb, 255:red, 0; green, 0; blue, 0 }  ][line width=0.75]    (4.37,-1.32) .. controls (2.78,-0.56) and (1.32,-0.12) .. (0,0) .. controls (1.32,0.12) and (2.78,0.56) .. (4.37,1.32)   ;

\end{tikzpicture}
    \caption{Orbits of the perturbed Hopf $b$-vector field. Note that the singular periodic orbit passing through the $z$ axis becomes a generalized escape orbit after the perturbation.}
    \label{fig:bHopfPerturbed}
\end{figure}
\end{proof}

\subsection{The singular Weinstein conjecture is generically true when the critical surface has a component of positive genus} In this final subsection we show that, in contrast with the case that all the critical surfaces are $\mathbb S^2$, when at least one of the components of $Z$ has positive genus, the associated Reeb $b$-vector field exhibits a singular periodic orbit almost surely.

\begin{proposition}\label{prop:genericSPOonZ}
 Let $(M,Z,\alpha)$ be a $b$-contact $3$-dimensional manifold. Assume that $Z$ has a component $Z_0$ with positive genus. Then for generic $\alpha$ the associated Reeb $b$-vector field has at least one singular periodic orbit on $Z_0$.  
\end{proposition}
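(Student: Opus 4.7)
The plan is to combine the Hamiltonian structure of $R_\alpha|_Z$ from Proposition \ref{prop:reebatz} with the semi-local existence of escape orbits at Morse saddles from \cite{FMOP23}, and to conclude with a transverse intersection argument in the ambient $3$-manifold $M$. First, by a small $C^\infty$-perturbation of $\alpha$ that preserves the $b$-contact condition, I would arrange that the exceptional Hamiltonian $H=-f|_{Z_0}$ is Morse on $Z_0$ with all saddle values distinct, and that the restricted Hamiltonian flow on $Z_0$ is Morse--Smale. Since $Z_0$ has genus $g\geq 1$, Morse theory forces at least $2g\geq 2$ saddles of $H$ on $Z_0$. A further perturbation by a small multiple $c\,\tfrac{dz}{z}$ of the defining-function differential shifts $f|_{Z_0}$ by the constant $-c$ without spoiling the $b$-contact condition, so that one can also arrange $f|_{Z_0}$ to attain both positive and negative values at its saddle critical points. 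Pick then saddles $p,q\in Z_0$ with $f(p)>0>f(q)$.

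At each such saddle, $R_\alpha$ viewed as a smooth vector field on $M$ has a hyperbolic zero: its tangent spectrum $\pm\lambda$ comes from the Hamiltonian saddle structure of Proposition \ref{prop:reebatz}, and a direct computation using $\iota_{R_\alpha}\alpha=1$ and $\iota_{R_\alpha}d\alpha=0$ shows that the remaining, transverse eigenvalue has the same sign as $1/f(\cdot)$. Hence $W^u(p)\subset M$ is $2$-dimensional and transverse to $Z_0$ at $p$, and $W^s(q)\subset M$ is similarly $2$-dimensional and transverse to $Z_0$ at $q$; the infinitely many escape orbits at $p$ predicted by \cite{FMOP23} are exactly the orbits of $W^u(p)\setminus Z$, and analogously at $q$.

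Next I would apply a transverse intersection argument. Since $W^u(p)$ and $W^s(q)$ are two $2$-dimensional invariant manifolds of a vector field on the $3$-manifold $M$, a Kupka--Smale-type genericity for smooth vector fields on $M$ ensures that they meet transversely along a (possibly empty) $1$-dimensional invariant set. Both manifolds meet $Z_0$ along the $1$-dimensional separatrices of the Hamiltonian saddles $p$ and $q$ on $Z_0$; the positive genus of $Z_0$ ensures that this separatrix structure of $H$ on $Z_0$ is topologically non-trivial, and by a $\lambda$-lemma argument in the transverse hyperbolic directions at $p$ and $q$ the unstable leaves of $W^u(p)$ are forced to accumulate on the stable leaves of $W^s(q)$. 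This produces a transverse intersection orbit in $M\setminus Z$, which is exactly the sought singular periodic orbit with $\alpha$-limit $p\in Z_0$ and $\omega$-limit $q\in Z_0$.

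The main obstacle is precisely this last intersection argument: one must ensure that $W^u(p)\cap W^s(q)$ has a component away from $Z$, rather than coinciding with their common intersection along the separatrix graph in $Z_0$. The positive genus hypothesis is essential here, as it forces both the existence of at least two saddles with appropriately signed values of $f$ (after the shift) and a topologically non-trivial separatrix graph on $Z_0$, features that propagate off $Z$ via the $\lambda$-lemma; for $Z_0\cong\mathbb S^2$ neither is automatically available, consistent with the counterexample in Theorem \ref{thm:counterexample}.
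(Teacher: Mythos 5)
Your proposal proves (or rather, attempts to prove) a different statement from the one the paper makes. The proposition asserts a singular periodic orbit \emph{on} $Z_0$, i.e.\ an orbit contained in the surface $Z_0$ whose $\alpha$- and $\omega$-limits are zeros of the field, and the paper's proof is entirely two-dimensional: by Proposition \ref{prop:reebatz} (resp.\ \cite[Theorem 5.7]{MO18}), $R_\alpha|_{Z_0}$ is a Hamiltonian vector field on the symplectic surface $Z_0$; by \cite[Theorem 3.1]{FMOP23} the exceptional Hamiltonian is generically Morse; positive genus forces saddle points; and every saddle separatrix of an area-preserving flow on a closed surface is part of a homoclinic or heteroclinic connection, which is precisely the claimed singular periodic orbit on $Z_0$. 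No analysis of the ambient dynamics in $M\setminus Z$ is needed. Incidentally, this also shows that your preliminary step of making the restricted Hamiltonian flow on $Z_0$ Morse--Smale is impossible: a Hamiltonian flow on a closed surface always has saddle connections (that is exactly the non-Morse--Smale feature being exploited), so that perturbation cannot be carried out within the class of exceptional Hamiltonian flows on $Z_0$.

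Beyond the misreading, the argument you give for your stronger claim (an orbit in $M\setminus Z$ joining two zeros $p,q\in Z_0$) has a genuine gap at its decisive step. Kupka--Smale-type genericity and transversality can only guarantee that \emph{if} $W^u(p)$ and $W^s(q)$ intersect away from $Z$, the intersection is transverse; they cannot produce a non-empty intersection, and the $\lambda$-lemma only propagates accumulation along already existing transverse intersections --- accumulation of $W^u(p)$ on $Z_0$ does not by itself force it to meet $W^s(q)$ off $Z$. You acknowledge this yourself ("the main obstacle is precisely this last intersection argument"), but no mechanism is supplied to close it, and none can be expected in this generality: the spirit of the paper's counterexamples (Theorem \ref{thm:counterexample}) is exactly that connections through $M\setminus Z$ between zeros on $Z$ can be destroyed by local perturbations, so any proof forcing such a connection from genus alone would need a substantially new global argument. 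The constant-shift perturbation by $c\,\frac{dz}{z}$ and the sign analysis of the transverse eigenvalue are therefore machinery for a statement that is neither proved here nor needed for the proposition as stated.
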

 Generic means open and dense in the $C^\infty$ topology for $b$-contact forms, which is defined as follows:
 
 \begin{remark}
The $C^\infty$-topology on the space of $b$-forms is defined as follows. As $b$-forms are smooth differential forms away from $Z$, away from a tubular neighborhood of $Z$, the topology coincides with the usual one. Around the critical set, given a $b$-form $\omega$ and its decomposition as in Equation (\ref{eq:decom}), that is $\omega=\alpha \wedge \frac{dz}{z}+\beta$, we define a smooth form in the tubular neighborhood by $\overline{\omega}:=\alpha \wedge dz+\beta$. We say that in the tubular neighborhood two $b$-forms $\omega_1, \omega_2$ are $C^\infty$-close if $\overline{\omega_1}, \overline{\omega_2}$ are $C^\infty$-close in the sense of smooth differential forms defined on this neighborhood.  
\end{remark}

We proceed with the proof of Proposition \ref{prop:genericSPOonZ}.

\begin{proof}
Let $R_{\alpha}$ be the Reeb $b$-vector field associated with $\alpha$. It was shown in \cite[Theorem 5.7]{MO18} that $R_{\alpha}|_{Z_k}$ is a (nontrivial) Hamiltonian vector field for each connected component $Z_k$ of $Z$. More precisely, there are functions $H_k\in C^\infty(Z_k,\mathbb R)$ such that $R_\alpha|_{Z_k}=X_{H_k}$, where $X_{H_k}$ is the Hamiltonian field with Hamiltonian function $H_k$ and with respect to a symplectic form that the $b$-contact form induces on the critical set (see \cite[Proposition 6.1]{MO21} for the details of this symplectic form).

Next, we invoke \cite[Theorem 3.1]{FMOP23}, which shows that for generic $\alpha$, each Hamiltonian function $H_k$ is a Morse function. Accordingly, since the first Betti number of $Z_0$ is positive by assumption, the Hamiltonian field $X_{H_0}$ has an even (and positive) number of saddle points $\{p_1,\cdots,p_{2J}\}$, for some $J\geq1$. The vector field $X_{H_0}$ has four semi-orbits (two stable and two unstable) associated with each critical point $p_j$. Since the vector field is Hamiltonian, it is standard that each semi-orbit is part of a homoclinic orbit (i.e., the closure of the orbit contains only one critical point) or of a heteroclinic orbit (the closure of the orbit contains two different critical points). Both cases yield singular periodic orbits on $Z_0$ by definition. This completes the proof of the proposition.     
\end{proof}
\begin{remark}
In fact, an easy counting shows that $R_\alpha$ has $2b_1(Z_0)$ singular periodic orbits on $Z_0$ (in the generic case that the exceptional Hamiltonian of $R_\alpha$ on $Z_0$ is Morse). Here $b_1(Z_0)$ denotes the first Betti number of $Z_0$. Moreover, since a generic Morse function has different critical values for different critical points, these singular periodic orbits are in fact homoclinic orbits of the field.
\end{remark}

\section{A counterexample to the singular Hamiltonian Seifert conjecture}\label{sec:bSeifert}

Following the ideas of the counterexample to the singular Weinstein conjecture, in this section we will tackle the singular Hamiltonian Seifert conjecture. Specifically, we prove the following:

\begin{theorem}\label{thm:R4counterexampleSeifert}
    There exists a Hamiltonian $H\in C^\infty(\mathbb{R}^{4})$ in the standard $b$-symplectic manifold $(\mathbb{R}^{4}, \frac{dx_1}{x_1}\wedge dy_1 +  dx_2\wedge dy_2)$ such that $X_H$ does not have any singular periodic orbits, nor periodic orbits away from the critical surfaces in any of the level-sets $\Sigma_c=H^{-1}(c)$  for $c\in (1-\varepsilon,1+\varepsilon)$. Furthermore, these level-sets are all diffeomorphic to $\mathbb{S}^3$. Here, $\varepsilon>0$ is a small enough constant.
\end{theorem}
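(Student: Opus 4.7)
My plan is to mimic in the $b$-symplectic setting the perturbation strategy of Theorem~\ref{thm:counterexample}. I would start from the Hopf Hamiltonian $H_0 = \tfrac{1}{2}(x_1^2+y_1^2+x_2^2+y_2^2)$ of Section~\ref{sec:bhopf}: for every $c>0$, the level set $\Sigma_c^0 := H_0^{-1}(c)$ is diffeomorphic to $\mathbb{S}^3$ and meets $\{x_1=0\}$ in a $2$-sphere. Contracting ${}^b\omega_0$ with the Liouville vector field $X$ of Section~\ref{sec:bhopf} equips each $\Sigma_c^0$ with the Hopf $b$-contact form, so by Proposition~\ref{prop:bHopforbits} the field $X_{H_0}|_{\Sigma_c^0}$ already has no periodic orbits away from the critical sphere, and its only singular periodic orbits are the two open arcs of the circle $\{x_1^2+y_1^2=2c,\ x_2=y_2=0\}$ cut out by the critical sphere at $x_1=0$. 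As $c$ varies in an interval $(1-\varepsilon_0, 1+\varepsilon_0)$, these arcs sweep out two compact $2$-dimensional ribbons $R_\pm \subset \{x_2=y_2=0,\ \pm x_1>0\}$, both safely bounded away from the $b$-critical set.

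Next I would construct the Hamiltonian perturbation $H := H_0 + \epsilon(h_+ + h_-)$, with each $h_\pm$ a $C^\infty$-bump function supported in a small compact ball $K_\pm \subset \mathbb{R}^4\setminus\{x_1=0\}$ that meets every $\Sigma_c^0$ ($c$ near $1$) in a tubular neighbourhood of the corresponding singular periodic orbit. Since ${}^b\omega_0$ is smooth and non-singular throughout $K_\pm$, Darboux's theorem provides standard symplectic coordinates in which orbits of $X_{H_0}$ look locally like parallel straight lines. In those coordinates I would tailor $h_\pm$ to reproduce the rotational shearing perturbation used in the proof of Proposition~\ref{prop:breakspo}, so that on each level set met by $K_\pm$ the would-be singular periodic orbit is deformed into a helical arc whose incoming and outgoing semi-orbits fail to match in the angular coordinate by a positive amount of order $\epsilon$.

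To finish I would verify three points for $\epsilon>0$ sufficiently small. First, $dH$ is $C^\infty$-close to $dH_0$ on a neighbourhood of $\{H_0=1\}$, where $dH_0\ne 0$, so $H$ has no critical values in an interval $(1-\varepsilon,1+\varepsilon)$; by Ehresmann's fibration theorem every $\Sigma_c$ in that range is diffeomorphic to $\Sigma_1^0\cong\mathbb{S}^3$, with critical sphere varying smoothly with $c$. Second, outside $K_+\cup K_-$ the flow of $X_H$ coincides with that of $X_{H_0}$, so on each $\Sigma_c$ the only candidate singular periodic orbits are the two arcs through $K_\pm$, and the shearing calculation of Proposition~\ref{prop:breakspo}, applied on each level set, breaks both of them into escape orbits. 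Third, inside $K_\pm$ the unperturbed flow is nowhere periodic, so a sufficiently small perturbation cannot create closed orbits there; combined with the previous point, no new periodic orbits away from the critical set arise on any $\Sigma_c$.

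The step I expect to be the main obstacle is the \emph{uniformity in $c$}: a single bump function $h_\pm$ must break the singular periodic orbit on every level set $\Sigma_c$ in a whole interval around $1$, not merely on $\Sigma_1$. To leading order in $\epsilon$ the angular defect collected by the perturbed orbit is a continuous-in-$c$ version of the integral $\tfrac{1}{2}\epsilon\int f(s)\,ds$ appearing in Proposition~\ref{prop:breakspo}, which is strictly positive at $c=1$ and therefore remains strictly positive on a small enough interval. Making this precise requires choosing Darboux coordinates adapted to $H_0$ on a neighbourhood of each ribbon $R_\pm$ that vary smoothly with $c$, and then verifying the shearing estimate uniformly across the interval; this is the routine but bookkeeping-heavy ingredient that I expect will absorb most of the technical effort.
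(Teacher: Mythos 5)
Your proposal is correct and follows essentially the same route as the paper: start from the double harmonic oscillator $H_0$ on the standard $b$-symplectic $\mathbb{R}^4$, note via Proposition~\ref{prop:bHopforbits} that each nearby level set carries exactly two singular periodic orbits and no periodic orbits away from $Z$, and break those orbits by a $C^\infty$-small, compactly supported Hamiltonian perturbation modeled on the rotational shear of Proposition~\ref{prop:breakspo}, checking afterwards that no new closed or singular periodic orbits appear and that the level sets remain regular spheres. The uniformity-in-$c$ issue you single out is resolved in the paper by normalizing near each point $(\pm1,0,0,0)$ to the Hamiltonian flow-box $G=x_1$ (so the energy is itself a coordinate) and taking the perturbation $\varepsilon f(x_1)f(y_1)f(r)\tfrac{r^2}{2}$, which makes the angular defect $\varepsilon\int f(s)\,ds>0$ exactly the same for every energy in the interval, provided (as in Proposition~\ref{prop:breakspo}) the singular periodic orbits are placed off the rotation axis, i.e.\ at $(x_2,y_2)\neq(0,0)$ in these coordinates.
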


This contrasts strongly the Hamiltonian dynamics in smooth symplectic manifolds: in fact, it is known that the set of level-sets with periodic orbits is dense in $\mathbb{R}$, see for instance \cite{HoferZehnder}.

\begin{proof}
    The counterexample is based on the counterexample for the singular Weinstein conjecture of Section \ref{subsec:counterexample}. Consider the double harmonic oscillator Hamiltonian $H\in C^\infty(\mathbb{R}^4)$ given by $H(x_1,y_1,x_2,y_2)= \frac{1}{2}(|x|^2 + |y|^2)$ in the standard $b$-symplectic $\mathbb R^4$. As explained in Section \ref{sec:bhopf}, the Hamiltonian vector field $X_H$ on each level-set $\Sigma_{c}:= H^{-1}(c)$ induces the $b$-Hopf flow, rescaled according to the energy. In particular, on each of these level-sets, there are exactly two singular periodic orbits that pass through the points $(\pm c^{\frac{1}{2}}, 0, 0, 0)$, and no other escape orbits. Thus, if we break the singular periodic orbits with local perturbations, we will achieve the desired example, just as in Section \ref{subsec:counterexample}.

    We now introduce a perturbation of the Hamiltonian in a neighborhood of the points $(\pm 1,0,0,0)$ to break the singular periodic orbits. Since in a small neighborhood of a regular point $p$, a Hamiltonian system can be normalized to the linear Hamiltonian system given by $G=x_1$ in a neighborhood of the origin of the standard symplectic $\mathbb{R}^{4}$, the following lemma suffices to provide the desired perturbation. Again, we remark that this lemma belongs to smooth symplectic geometry, and not to $b$-symplectic geometry.

\begin{lemma}
    Let $(\mathbb{R}^{4},\omega)$ be the (smooth) standard symplectic $\mathbb{R}^{4}$. There exists a Hamiltonian $\tilde G$ arbitrarily $C^\infty$-close to the Hamiltonian $G=x_1$ and differing only in a small neighborhood $U$ of the origin, such that for all energies $c$ in a small interval $I^*$ containing 0, and for all sufficiently small $(x_2,y_2)\neq (0,0)$, the orbits of $\tilde G$ passing through $(c, y_1,x_2,y_2)$ and $(c,-y_1,x_2,y_2)$ do not coincide, where $(c,\pm y_1,x_2,y_2)$ are outside of $U$.
\end{lemma}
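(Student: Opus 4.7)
The plan is to reduce coincidence of orbits to the existence of a fixed point of a Poincaré map, and then to exploit rotational symmetry to realize this map as a nontrivial rotation on each invariant circle. Outside $U$ we have $X_{\tilde G} = X_G = -\partial_{y_1}$, so orbits of $X_{\tilde G}$ outside $U$ are vertical lines in the $y_1$-direction and are uniquely determined by the triple $(x_1, x_2, y_2)$. Fix $A > 0$ large enough that $\{|y_1| \geq A\}$ lies outside $U$. On the energy level $\{\tilde G = c\}$, outside $U$ one has $x_1 = c$, and the flow defines a Poincaré map $\Phi_c\colon (x_2,y_2) \mapsto \Phi_c(x_2,y_2)$ from $\{y_1 = A\}\cap\{\tilde G = c\}$ to $\{y_1 = -A\}\cap\{\tilde G = c\}$. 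The orbits through $(c, y_1, x_2, y_2)$ and $(c, -y_1, x_2, y_2)$ (with $y_1 \geq A$) coincide if and only if $\Phi_c(x_2, y_2) = (x_2, y_2)$. Hence it suffices to construct $\tilde G$ so that $\Phi_c$ has no fixed points in a punctured neighborhood of the origin for each $c \in I^*$.

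For this, I would set $\tilde G := x_1 + \epsilon\, \eta(x_1)\, f(y_1)\, \chi(r^2)\, r^2/2$ with $r^2 = x_2^2 + y_2^2$, where $\eta, \chi, f$ are smooth compactly supported bump functions with $\eta(0) = \chi(0) = 1$ and $\int_{\mathbb R} f > 0$, and $\epsilon > 0$ is small. For $\epsilon$ small and the supports of $\eta, \chi, f$ chosen small, $\tilde G$ is $C^\infty$-close to $G$ and agrees with $G$ outside a small neighborhood $U$ of the origin. The decisive property is that the perturbation depends on $(x_2, y_2)$ only through $r^2$; hence $\tilde G$ is invariant under the rotation action on the $(x_2, y_2)$-plane, so $r^2$ is a first integral of $X_{\tilde G}$ (direct check: $\dot{r^2} = 0$), rotations commute with the flow, and $\Phi_c$ is rotation-equivariant. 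Consequently $\Phi_c$ preserves each circle $\{r = r_0\}$ and restricts to it as an honest rotation by an angle $\theta(r_0, c)$.

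It then remains to compute $\theta(r_0, c)$ and check its non-triviality. Since $r^2$ is constant along orbits and $x_1 = c$ outside $U$, integrating $\dot x_1 = \epsilon\,\eta(x_1)\,f'(y_1)\,r^2/2$ over the finite time spent inside $U$ yields $|x_1(t) - c| = O(\epsilon r_0^2)$ throughout the orbit. Choosing $I^*$ inside the set where $\eta \equiv 1$, one gets $\eta(x_1(t)) \equiv 1$ along the orbit for $r_0$ sufficiently small, and a similar bound gives $\dot y_1 = -1 + O(\epsilon r_0^2)$. Reparametrizing by $y_1$, the $(x_2, y_2)$-dynamics is rotation with angular velocity $\epsilon f(y_1)$ up to $O(\epsilon^2 r_0^2)$ corrections, so
\begin{equation*}
\theta(r_0, c) \;=\; \epsilon \int_{\mathbb R} f(y_1)\, dy_1 \;+\; O(\epsilon^2 r_0^2).
\end{equation*}
Choosing $\epsilon$ small with $\epsilon \int f \in (0, 2\pi)$ and $r_0$ in a sufficiently small punctured neighborhood of $0$ makes the leading term dominate, so $\theta(r_0, c) \in (0, 2\pi)$ and $\Phi_c|_{\{r = r_0\}}$ is a fixed-point-free rotation, as required.

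The main technical obstacle is making the error estimate $O(\epsilon^2 r_0^2)$ rigorous and uniform in $c \in I^*$: one must carefully control how the small $x_1$-drift and the time reparametrization inside $U$ modify the rotation angle, and verify that these corrections cannot conspire to make $\theta(r_0, c)$ an exact multiple of $2\pi$ for some admissible $(r_0, c)$. This is a standard perturbative estimate for the Hamiltonian ODE once the rotation-symmetry reduction is in place.
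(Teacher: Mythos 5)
Your proposal is correct and follows essentially the same route as the paper: the same perturbation $\epsilon\,(\text{bump in }x_1)(\text{bump in }y_1)(\text{bump in }r)\,r^2/2$, the same use of rotational symmetry so that $r$ is a first integral and the passage through $U$ acts as a rotation in the $(x_2,y_2)$-plane, and the same computation of the net rotation angle $\epsilon\int f>0$. The only difference is that you keep $O(\epsilon^2 r_0^2)$ error terms, whereas (as the paper exploits) once the $x_1$- and $r$-bump factors are identically $1$ along the relevant orbits the system integrates exactly ($\dot y_1$ is exactly constant, $r$ exactly conserved), so the rotation angle is exactly $\epsilon\int f$ and the technical obstacle you flag at the end does not arise.
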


In other words, the perturbation applied to the Hamiltonian $G=x_1$ breaks the orbits passing through $(c,0,x_2,y_2)$ with $(x_2,y_2)\neq (0,0)$ (here, by ``break" we mean that the semi-orbits that coincided at $(c,0,x_2,y_2)$ before the perturbation no longer coincide after the perturbation). In particular, if we apply this perturbation to the double harmonic oscillator Hamiltonian in neighborhoods of the points $p_\pm=(\pm 1,0,0,0)$ such that $p_\pm$ are mapped to $(0,0,x_2,y_2)$ with small $(x_2,y_2)\neq(0,0)$, the singular periodic orbits will be successfully broken for all energies in an open set, yielding the desired counterexample. \end{proof}

\begin{proof}[Proof of the lemma]
    
    The construction of the perturbed Hamiltonian $\tilde G$ is similar to the construction in the proof of Proposition \ref{prop:breakspo}. The dynamics of $\tilde G$ on each energy level are essentially the same dynamics as the Reeb dynamics of the contact form in that proposition:  The flowlines are parallel everywhere except in a small cylindrical neighborhood of the origin, in which there is a small twist that breaks orbits entering and exiting the lids of the cylinder (see Figure \ref{fig:breakingspo}). 
    
    Similar to Proposition \ref{prop:breakspo}, we introduce a compact set $K=[-\delta,\delta]^2\times D_\delta$ in which the perturbation is contained. We parametrize the $(x_2,y_2)$-plane with polar coordinates $(r,\varphi)$, so the closed 2-disk $D_\delta$ is given by $r\leq\delta$. Let $\eta = \frac{1}{2}r^2$, so that the corresponding Hamiltonian vector field $X_{\eta} = \partial_\varphi$ is a rotation in the $(x_2,y_2)$-plane. Now consider
    \begin{equation*}
        \tilde{G} = G + \varepsilon f(x_1)f(y_1)f(r)\eta,
    \end{equation*}
   where $f:\mathbb{R}^{}\rightarrow [0,1]$ is a bump function which is equal to $1$ in $I_{ct}:=[-\delta/2,\delta/2]$, and with support in $(-\delta,\delta)$. The Hamiltonian $\tilde G$ can be made arbitrarily $C^\infty$-close to $G$ by making $\varepsilon$ sufficiently small, and they differ only in a compact neighborhood of the origin. Since $\tilde G$ and $G$ are close, there is a subinterval $I^*\subset I_{ct}$ such that for all energies $c\in I^*$, the energy level $\tilde{G}^{-1}(c)$ is contained in $\{x_1\in I_{ct}\}$. We now show that all these energy levels satisfy the statement of the lemma. That is, that for all $c\in I^*$ and all $(x_2,y_2)\in D_{\delta/2}\setminus \{0\}$, the orbits of $\tilde G$ passing through $(c,y_1,x_2,y_2)$ and $(c,-y_1,x_2,y_2)$ do not coincide, where $|y_1|>\delta$. To do this, we integrate the flow explicitly for these points.
   
   Since for $c\in I^*$, we have $f(x_1) = 1$, we can disregard the $f(x_1)$ factor. Furthermore, we consider only flowlines passing through $r\in I_{ct}$, and since $r$ is an invariant of the flow, we also have $f(r) = 1$. Therefore, the Hamiltonian is effectively
  \begin{equation*}
  \tilde G = G + \varepsilon f(y_1)\eta.
  \end{equation*}
The Hamiltonian vector field is easily computed to be
\begin{equation*}
X_{\tilde G} = \partial_{y_1} + \varepsilon\left(f'(y_1)\eta\partial_{x_1} + f(y_1)\partial_\varphi\right).
\end{equation*}
Note, again, that $\eta = \frac{1}{2}r^2$ is invariant along flowlines. Integrating the orbits of this field is much easier than integrating the Reeb vector field of Proposition \ref{prop:breakspo}: Taking as initial condition the point $(c^*,-y_1^*,r^*,\varphi^*)$ with $c^*\in I^*$, $y_1^*>\delta$ and $r^*\in I_{ct}$, we obtain
\begin{equation*}
    c(t) = c^* + \varepsilon\eta\int_{-y_1^*}^{-y_1^*+t}f'(s)ds\;,\; y_1(t) = -y_1^* + t\;,\; r(t) = r^*\;\text{and}\; \varphi(t) = \varphi^* + \varepsilon\int_{-y_1^*}^{-y_1^* + t}f(s)ds.
\end{equation*}
The time for the orbit to reach $\{y_1 = y_1^*\}$ is $T = 2y_1^*$, so keeping in mind that $f$ non-negative and zero at $\pm y_1^*$, we see that
\begin{equation*}
    c(T) = c^*\quad \text{ and } \quad \varphi(T) = \varphi^* + \varepsilon C
\end{equation*}
for some $C>0$. In other words, for small enough $\varepsilon$, the orbit coming into the cylinder at $(c^*,-y_1^*,r^*,\varphi^*)$ exits at $(c^*,y_1^*,r^*,\varphi^* +\varepsilon C)\neq (c^*,y_1^*,r^*,\varphi^*)$. This completes the proof of the lemma.

\end{proof}

\section{A generalized Weinstein conjecture}

We end this article with a reformulation of the singular Weinstein conjecture which remains unsolved. Observe that in the counterexample we provided there are no singular periodic orbits and no regular periodic orbits outside the critical set $Z$.  However, the example exhibits an infinite number of generalized singular periodic orbits.  Indeed, by our procedure, the singular periodic orbits are broken and the resulting $b$-contact manifold has only generalized singular periodic orbits outside of the critical set. It remains uncertain whether constructions without generalized singular periodic orbits can be achieved.

The next definition is needed for the formulation of a  generalized version of the singular Weinstein conjecture.

\begin{definition}
    An orbit in $M \setminus Z$ is called \emph{quasi-closed} if it is either closed 
or its $\alpha$ - and $\omega$-limit sets belong to $Z$.
\end{definition}
In particular, both, regular periodic orbits and singular periodic orbits are quasi-closed. Generalized singular periodic orbits are also quasi-closed.

\begin{center}
    \begin{tikzpicture}
  \draw[thick] (0,0) arc (180:0:2);
  
  \fill[red] (0,0) circle (2pt) node[below] {A};
  \fill[red] (4,0) circle (2pt) node[below] {B};

  \draw[thick][->] (5,0) -- node[below] {$\pi$} (6.5,0);
  \draw[thick, blue] (9,0) circle (2);
    \fill[red] (7,0) circle (2pt) node[right] {$A\sim B$};
 
\end{tikzpicture}
\end{center}
A pictorial way to represent quasi-closed orbits is to think that they are the closed orbits in  $M/Z$,
where the quotient is taken in the topological sense.
This leads us to the formulation of a new conjecture:

\begin{conjecture}[Generalized Weinstein conjecture]\label{conj:gwc}
Every Reeb vector field of a (eventually singular) contact manifold admits at least
one quasi-closed orbit.
\end{conjecture}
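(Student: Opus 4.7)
The plan is to prove the conjecture in dimension $3$ by combining a careful desingularization of the $b$-contact form with Taubes' theorem and an ODE compactness argument. Given $(M^{3},Z,\alpha)$ with local decomposition $\alpha = f\,\tfrac{dz}{z}+\beta$ near $Z$, the first step is to construct a one-parameter family $\alpha_{\varepsilon}$ of smooth contact forms on $M$ such that $\alpha_{\varepsilon}\equiv\alpha$ outside the tubular neighborhood $N_{\varepsilon}:=\{|z|<\varepsilon\}$ of $Z$, and $\alpha_{\varepsilon}$ extends smoothly across $Z$. Concretely, one replaces the singular factor $dz/z$ on $N_{\varepsilon}$ by $\phi_{\varepsilon}(z)\,dz$, where $\phi_{\varepsilon}$ is smooth, equal to $1/z$ for $|z|\geq\varepsilon$, and interpolated so that the contact condition $\alpha_{\varepsilon}\wedge d\alpha_{\varepsilon}\neq 0$ is preserved. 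Verifying this condition reduces to a pointwise sign inequality involving the exceptional Hamiltonian $-f|_{Z}$ from Proposition~\ref{prop:reebatz}, which should hold under a mild genericity assumption on $\alpha$.

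Once $\alpha_{\varepsilon}$ is constructed, Taubes' theorem produces a closed Reeb orbit $\gamma_{\varepsilon}$ of $R_{\alpha_{\varepsilon}}$. The heart of the argument is the limit analysis as $\varepsilon\to 0$. A dichotomy emerges: either some subsequence $\gamma_{\varepsilon_{k}}$ has image contained in a fixed compact subset $K\subset M\setminus Z$, in which case uniform bounds on $R_{\alpha_{\varepsilon_{k}}}|_{K}$ and Arzel\`a--Ascoli yield a limiting closed Reeb orbit of $R_{\alpha}$ in $M\setminus Z$; or every $\gamma_{\varepsilon}$ penetrates deeper into $N_{\varepsilon}$ as $\varepsilon$ shrinks, and after subsequence extraction the portions of $\gamma_{\varepsilon}$ outside $N_{\varepsilon}$ converge to integral curves of $R_{\alpha}$ in $M\setminus Z$ whose $\alpha$- and $\omega$-limit sets lie on $Z$. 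A pigeonhole and concatenation argument should then single out at least one such limit curve that is a generalized singular periodic orbit, as desired.

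The main obstacle I foresee is obtaining a uniform upper bound on the periods $T(\gamma_{\varepsilon})$ as $\varepsilon\to 0$. Taubes' theorem is an existence result coming from Seiberg--Witten monopole Floer homology and does not directly provide such a bound; without one, the $\gamma_{\varepsilon}$ could wind arbitrarily many times near $Z$, preventing extraction of a sensible limit. I would try to control $T(\gamma_{\varepsilon})$ using the semi-local structure around $Z$: by Proposition~\ref{prop:reebatz}, the flow near $Z$ is an approximate skew product over the Hamiltonian flow of $-f|_{Z}$, so one can project the $N_{\varepsilon}$-portions of $\gamma_{\varepsilon}$ to trajectories of $X_{-f|_{Z}}$ on $Z$ and bound their transverse travel time in terms of $\varepsilon$ and the local eigenvalues of the exceptional Hamiltonian at its critical points.

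If direct period control proves elusive, an alternative is a symplectic field theory style neck-stretching argument across $\partial N_{\varepsilon}$ within the $b$-symplectization of $(M,\alpha)$: a degenerating sequence of pseudo-holomorphic cylinders asymptotic to $\gamma_{\varepsilon}$ should, in the limit, yield a broken holomorphic building whose non-trivial asymptotes (at $\pm\infty$ or at the breaking levels along $Z$) correspond to the sought quasi-closed orbit. Making this rigorous would require first developing a $b$-version of SFT, which appears feasible but substantial. In higher dimensions the same smoothing-and-limit scheme at best reduces Conjecture~\ref{conj:gwc} to the classical Weinstein conjecture, which is known only in special cases (overtwisted, subcritical Stein-fillable, etc.), so unconditional progress beyond dimension $3$ remains out of reach by this route.
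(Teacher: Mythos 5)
The statement you are trying to prove is not a theorem of the paper: Conjecture~\ref{conj:gwc} is formulated at the end of the article precisely as an \emph{open} problem (suggested by Polterovich), and the authors explicitly point to the unresolved $C^\infty$ volume-preserving Seifert conjecture and the absence of a topological classification of limit sets of $3$-dimensional flows as reasons to expect it to be hard. So there is no proof in the paper to compare yours against, and what you have written is a research programme rather than a proof; as it stands it does not establish the conjecture even in dimension $3$.

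Two gaps are decisive. First, the desingularization step is not justified: you replace $dz/z$ by a smooth $\phi_\varepsilon(z)\,dz$ and assert that the contact condition survives ``under a mild genericity assumption,'' but this is exactly where the $b$-geometry is an obstruction. The known desingularization results concern $b^{2k}$-symplectic forms (even order), and truncating the $1/z$ singularity of a $b$-contact form need not yield a contact form near $Z$; no verification of $\alpha_\varepsilon\wedge d\alpha_\varepsilon\neq 0$ on $N_\varepsilon$ is given, and the sign analysis via the exceptional Hamiltonian of Proposition~\ref{prop:reebatz} is only sketched. Second, and more fundamentally, you acknowledge that Taubes' theorem gives no uniform bound on the periods $T(\gamma_\varepsilon)$, and without such a bound the whole limit analysis collapses: the orbits $\gamma_{\varepsilon}$ can sink entirely into the neck $N_\varepsilon$ as $\varepsilon\to 0$, and their Hausdorff limit can be contained in $Z$ itself, where the existence of (infinitely many) periodic orbits is already known and says nothing about quasi-closed orbits in $M\setminus Z$. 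The ``pigeonhole and concatenation argument'' that is supposed to produce a generalized singular periodic orbit from the surviving pieces is precisely the missing heart of the matter --- note that the paper's own counterexamples show the limit cannot be expected to be a genuine periodic or singular periodic orbit, so controlling how broken trajectories near $Z$ reassemble is unavoidable and is not addressed. The $b$-SFT alternative you mention is, by your own admission, machinery that does not yet exist. The conjecture therefore remains open.
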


The facts that the volume-preserving Seifert conjecture is unsolved in the $C^\infty$ class (see~\cite{Kup}) and there is no topological classification of limit sets for $3$-dimensional flows~\cite{Gu} suggest that this conjecture poses a significant challenge. Therefore, it should be the focus of future research.




\begin{thebibliography}{9}

\bibitem{invitation} R. Braddell, A. Delshams, E. Miranda, C. Oms, A. Planas, An invitation to singular symplectic geometry, \emph{Int. J. Geom. Methods Mod. Phys.} \textbf{16} (supp01) (2019) 1940008.

\bibitem{CMP19} R. Cardona, E. Miranda, and D. Peralta-Salas, Euler flows and singular geometric structures, \emph{Philos. Trans. R. Soc. A} \textbf{377} (2158) (2019) 20190034.

\bibitem{CMPP} R. Cardona, E. Miranda, D. Peralta-Salas, and F. Presas, Universality of Euler flows and flexibility of Reeb embeddings, \emph{Adv. Math.} \textbf{428} (2023) 109142.

\bibitem{C13} G. Cavalcanti, Examples and counterexamples of log-symplectic manifolds,
\textit{Journal of Topology}, \textbf{10} (2017) 1–21.

\bibitem{DKRS} A. Delshams, V. Kaloshin, A. de la Rosa and T. Seara, Global instability in the restricted planar elliptic three-body problem, \emph{Comm. Math. Phys.} \textbf{366} (2019), no. 3, 1173–1228.

\bibitem{FMOP23} J. Fontana-McNally, E. Miranda, C. Oms, and D. Peralta-Salas, {From $2N$ to Infinitely Many Escape Orbits}, \emph{Reg. Chaotic Dyn.} \textbf{28}, 498–511 (2023).  https://doi.org/10.1134/S1560354723520039


\bibitem{G08} H. Geiges, An Introduction to Contact Topology. (Cambridge University Press, 2008).

\bibitem{G91} E. Giroux, Convexité en topologie de contact, \textit{Comment. Math. Helv.} \textbf{66} (1991), 637–677.


\bibitem{GL13} M. Gualtieri, and S. Li, Symplectic groupoids of log symplectic manifolds, \textit{Int Math Res Notices} (2014): 3022-3074.


\bibitem{GMP14} V. Guillemin, E. Miranda, and A. R. Pires, Symplectic and Poisson geometry on
$b$-manifolds, \emph{Adv. Math.} \textbf{264} (2014) 864–896.


\bibitem{GMS} M. Guardia, P. Martín, and T.M. Seara. Oscillatory motions for the restricted planar circular three body problem. \emph{Invent. Math.} 203, no. 2 (2016): 417-492.

\bibitem{Gu}
B. Gunther, J. Segal, Every attractor of a flow on a manifold has the shape of a finite polyhedron. Proc. Amer. Math. Soc. 119 (1993) 321--329.

\bibitem{HoferZehnder} H. Hofer, and E. Zehnder, Symplectic invariants and Hamiltonian dynamics, \emph{Birkhäuser}, 2012.

\bibitem{KM17} A. Kiesenhofer, and E. Miranda, {Cotangent models for integrable systems}, \emph{Comm. Math. Phys.}, \textbf{350} (2017) 1123-1145.

\bibitem{KMS16}  A. Kiesenhofer, E. Miranda, and G. Scott, Action-angle variables and a KAM theorem for $b$-Poisson manifolds, \emph{J. Math. Pures Appl.} (9) \textbf{105} (2016), no. 1, 66–85.

\bibitem{Kup}
G. Kuperberg, A volume-preserving counterexample to the Seifert conjecture. Comment. Math. Helv. 71 (1996) 70--97.

\bibitem{llibre} Llibre, Jaume, and Carles Simó. Oscillatory solutions in the planar restricted three-body problem. Mathematische Annalen 248, no. 2 (1980): 153-184.


\bibitem{MOT13} I. Marcut, and B. Osorno Torres, On cohomological obstructions to the existence of log
symplectic structures, \textit{Journal of
symplectic geometry} \textbf{12} (2014), no. 4, 863–866.

\bibitem{M93} R. Melrose, \emph{The Atiyah–Patodi–Singer Index Theorem} (CRC Press, 1993).



\bibitem{MOP22} E. Miranda, C. Oms, D. Peralta-Salas, On the singular Weinstein conjecture and the existence of escape orbits for $b$-Beltrami fields. {\em Commun. Contemp. Math.}. \textbf{24}, 2150076 (2022), https://doi.org/10.1142/S0219199721500760.

\bibitem{MO18} E. Miranda, and C. Oms, Contact structures with singularities: from local to global, \emph{J. Geom. Phys.} \textbf{192} (2023),
104957, https://doi.org/10.1016/j.geomphys.2023.104957

\bibitem{MS21} E. Miranda and G. Scott, The geometry of $E$-manifolds, \emph{Rev. Mat. Iberoamericana}
\textbf{37}(3) (2021) 1207–1224.
\bibitem{MO21}  E. Miranda and C. Oms, The singular Weinstein conjecture, \emph{Adv. Math.}\textbf{ 389 }(2021), Paper No. 107925. https://doi.org/10.1016/j.aim.2021.107925.

\bibitem{S16} G. Scott, The Geometry of $b^k$-Manifolds, \emph{J. Symplectic Geom.} \textbf{14} (2016), no. 1, 71–95.

\bibitem{sitnikov} K. Sitnikov, The existence of oscillatory motions in the three-body problem, \emph{Dokl. Akad. Nauk SSSR}, vol. \textbf{133},
no. 2 (1960): 303-306.



\bibitem{NT96} R. Nest, and B. Tsygan, Formal deformations of symplectic manifolds with boundary, \textit{J.
Reine Angew. Math.} \textbf{481} (1996), 27-54.








\end{thebibliography}
\end{document}